\documentclass{article}

\usepackage{color}

\usepackage{latexsym}
\usepackage{amssymb}
\usepackage{amsthm}
\usepackage{amssymb,amsmath}
\usepackage{MnSymbol}
\usepackage{mathrsfs}
\usepackage{dsfont}

\newtheorem{thm}{Theorem}
\newtheorem{lem}[thm]{Lemma}
\newtheorem{cor}[thm]{Corollary}
\newtheorem{prop}[thm]{Proposition}

\theoremstyle{definition}
\newtheorem{rem}[thm]{Remark}
\newtheorem{nota}[thm]{Notation}
\newtheorem{defi}[thm]{Definition}
\newtheorem{defi+nota}[thm]{Definition and Notation}

\title{3-uniform hypergraphs: modular decomposition and realization by tournaments}

\author{Abderrahim Boussa\"{\i}ri\footnotemark[1] \footnotemark[4]
\and 
Brahim Chergui \footnotemark[1] \footnotemark[3]
\and 
Pierre Ille\footnotemark[2] \footnotemark[5]
\and Mohamed Zaidi\footnotemark[1] \footnotemark[6]
}

\begin{document}

\maketitle

\footnotetext[1]{Facult\'e des Sciences A\"{\i}n Chock, 
D\'epartement de Math\'ematiques et Informatique, Km 8 route d'El Jadida, 
BP 5366 Maarif, Casablanca, Maroc}
\footnotetext[2]{Aix Marseille Univ, CNRS, Centrale Marseille, I2M, Marseille, France}
\footnotetext[4]{{\tt aboussairi@hotmail.com}}
\footnotetext[3]{{\tt cherguibrahim@yahoo.fr}}
\footnotetext[5]{{\tt pierre.ille@univ-amu.fr}}
\footnotetext[6]{{\tt zaidi.fsac@gmail.com}}

\begin{abstract}
Let $H$ be a 3-uniform hypergraph. A tournament $T$ defined on $V(T)=V(H)$ is a realization of $H$ if the edges of $H$ are exactly the 3-element subsets of $V(T)$ that induce 3-cycles. 
We characterize the 3-uniform hypergraphs that admit realizations by using a suitable modular decomposition. 
\end{abstract}

\medskip

\noindent {\bf Mathematics Subject Classifications (2010):} 05C65, 05C20. 

\medskip

\noindent {\bf Key words:} hypergraph, 3-uniform, module, tournament, realization.

\section{Introduction}

Let $H$ be a 3-uniform hypergraph. 
A tournament $T$, with the same vertex set as $H$, is a realization of $H$ if the edges of $H$ are exactly the 3-element subsets of the vertex set of $T$ that induce 3-cycles. 
The aim of the paper is to characterize the 3-uniform hypergraphs that admit realizations (see \cite[Problem~1]{BILT04}). 
This characterization is comparable to that of the comparability graphs, that is, the graphs admitting a transitive orientation (see~\cite{GH}). 

In Section~\ref{section_tournament}, we recall some of the classic results on modular decomposition of tournaments. 

In the section below, we introduce a new notion of module for hypergraphs. 
We introduce also the notion of a modular covering, which generalizes the notion of a partitive family. 
In Subsection~\ref{subs Modular covering}, we show that the set of the modules of a hypergraph induces a modular covering. 
In Subsection~\ref{subs Gallai's decomposition}, we consider the notion of a strong module, which is the usual strengthening of the notion of a module (for instance, see Subsection~\ref{subs Modular decomposition of tournaments} for tournaments). 
We establish the analogue of Gallai's modular decomposition theorem for hypergraphs.

Let $H$ be a realizable 3-uniform hypergraph. 
Clearly, the modules of the realizations of $H$ are modules of $H$ as well, but the converse is false. 
Consider a realization $T$ of $H$. 
In Section~\ref{real_dec}, we characterize the modules of $H$ that are not modules of $T$. 
We deduce that a realizable 3-uniform hypergraph and its realizations share the same strong modules. 
Using Gallai's modular decomposition theorem, we prove that a realizable 3-uniform hypergraph is prime 
(i.e. all its modules are trivial) if and only if each of its realizations is prime too. 
We have similar results when we consider a comparability graph and its transitive orientations (for instance, see 
\cite[Theorem~3]{IR06} and \cite[Corollary~1]{IR06}).

In Section~\ref{real}, by using the modular decomposition tree, we demonstrate that a 3-uniform hypergraph is realizable if and only if all its prime, 3-uniform and induced subhypergraphs are realizable. 
We pursue by characterizing the prime and 3-uniform hypergraphs that are realizable. 
Hence \cite[Problem~1]{BILT04} is solved. 
We conclude by counting the realizations of a realizable 3-uniform hypergraph by using the modular decomposition tree. 
We have an analogous counting when we determine the number of transitive orientations of a comparability graph by using the modular decomposition tree of the comparability graph. 
The number of transitive orientations of a comparability graph was determined by Filippov and Shevrin~\cite{FS}. 
They used the notion of a saturated module, which is close to that of a strong module. 

At present, we formalize our presentation. 
We consider only finite structures. 
A hypergraph $H$ is defined by a vertex set $V(H)$ and an edge set $E(H)$, where 
$E(H)\subseteq 2^{V(H)}\setminus\{\emptyset\}$. 
In the sequel, we consider only hypergraphs $H$ such that 
\begin{equation*}
E(H)\subseteq 2^{V(H)}\setminus(\{\emptyset\}\cup\{\{v\}:v\in V(H)\}). 
\end{equation*}
Given $k\geq 2$, a hypergraph $H$ is {\em $k$-uniform} if 
\begin{equation*}
E(H)\subseteq\binom{V(H)}{k}. 
\end{equation*}
A hypergraph $H$ is {\em empty} if $E(H)=\emptyset$. 
Let $H$ be a hypergraph. 
With each $W\subseteq V(H)$, we associate the {\em subhypergraph} $H[W]$ of $H$ induced by $W$, which is defined by $V(H[W])=W$ and $E(H[W])=\{e\in E(H):e\subseteq W\}$. 

\begin{defi}\label{defi_module_hyper}
Let $H$ be a hypergraph. 
A subset $M$ of $V(H)$ is a {\em module} of $H$ if for each $e\in E(H)$ such that $e\cap M\neq\emptyset$ and 
$e\setminus M\neq\emptyset$, there exists $m\in M$ such that $e\cap M=\{m\}$ and for every $n\in M$, we have 
$$(e\setminus\{m\})\cup\{n\}\in E(H).$$ 
\end{defi}

\begin{nota}\label{module}
Given a hypergraph $H$, the set of the modules of $H$ is denoted by $\mathscr{M}(H)$. 
For instance, if $H$ is an empty hypergraph, then $\mathscr{M}(H)=2^{V(H)}$. 
\end{nota}

We study the set of the modules of a hypergraph. 
Let $S$ be a set. 
A family $\mathscr{F}$ of subsets of $S$ is a {\em partitive family} \cite[Definition~6]{CHM81} on $S$ if it satisfies the following assertions.  
\begin{itemize}
\item $\emptyset\in\mathscr{F}$, $S\in\mathscr{F}$, and for every $x\in S$, 
$\{x\}\in\mathscr{F}$.
\item For any $M,N\in\mathscr{F}$, $M\cap N\in\mathscr{F}$.
\item For any $M,N\in\mathscr{F}$, if $M\cap N\neq\emptyset$, 
$M\setminus N\neq\emptyset$ and $N\setminus M\neq\emptyset$, then 
$M\cup N\in\mathscr{F}$ and $(M\setminus N)\cup(N\setminus M)\in\mathscr{F}$.
\end{itemize}

We generalize the notion of a partitive family as follows. 

\begin{defi}\label{covering}
Let $S$ be a set. 
A {\em modular covering} of $S$ is a function $\mathfrak{M}$ which associates with each 
$W\subseteq S$ a set $\mathfrak{M}(W)$ of subsets of $W$, and which satisfies the following assertions. 
\begin{enumerate}
\item[(A1)] For each $W\subseteq S$, $\mathfrak{M}(W)$ is a partitive family on $W$. 
\item[(A2)] For any $W,W'\subseteq S$, if $W\subseteq W'$, then 
$$\{M'\cap W:M'\in\mathfrak{M}(W')\}\subseteq\mathfrak{M}(W).$$
\item[(A3)] For any $W,W'\subseteq S$, if $W\subseteq W'$ and $W\in\mathfrak{M}(W')$, then 
$$\{M'\in\mathfrak{M}(W'):M'\subseteq W\}=\mathfrak{M}(W).$$
\item[(A4)] Let $W,W'\subseteq S$ such that $W\subseteq W'$. 
For any $M\in\mathfrak{M}(W)$ and $M'\in\mathfrak{M}(W')$, if $M\cap M'=\emptyset$ and 
$M'\cap W\neq\emptyset$, then $M\in\mathfrak{M}(W\cup M')$. 
\item[(A5)] Let $W,W'\subseteq S$ such that $W\subseteq W'$. 
For any $M\in\mathfrak{M}(W)$ and $M'\in\mathfrak{M}(W')$, if $M\cap M'\neq\emptyset$, then $M\cup M'\in\mathfrak{M}(W\cup M')$. 
\end{enumerate}
\end{defi}

We obtain the following result.

\begin{prop}\label{properties_modules}
Given a hypergraph $H$, the function defined on $2^{V(H)}$, which maps each $W\subseteq V(H)$ to 
$\mathscr{M}(H[W])$, is a modular covering of $V(H)$. 
\end{prop}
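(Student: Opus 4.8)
The plan is to verify the five axioms (A1)--(A5) one at a time for the function $W\mapsto\mathscr{M}(H[W])$, relying throughout on the following elementary observation: if $W\subseteq W'$ and $e\in E(H[W])$, then $e\in E(H[W'])$ and $e\cap W'=e$, so the ``witness'' $m\in M$ of Definition~\ref{defi_module_hyper} for an edge meeting both $M$ and its complement is robust under shrinking or enlarging the ambient vertex set, provided the edge stays inside the ambient set. The real content is bookkeeping about which edges survive in which induced subhypergraphs.

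First I would prove (A1), i.e.\ that $\mathscr{M}(H[W])$ is a partitive family on $W$. That $\emptyset,W$ and all singletons $\{x\}$ are modules is immediate from the definition (an edge cannot meet both a singleton and its complement in more than one point on the singleton side, and the required edge-swap is trivial). For closure under intersection: given modules $M,N$ and an edge $e$ with $e\cap(M\cap N)\neq\emptyset$ and $e\setminus(M\cap N)\neq\emptyset$, I would do a short case analysis on whether $e\subseteq M$, $e\subseteq N$, or $e$ crosses each of $M,N$; in the crossing case the witnesses produced by $M$ and by $N$ must coincide (both equal the unique point of $e$ in $M\cap N$), and combining the two swap properties gives the swap property for $M\cap N$. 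Closure under union and under symmetric difference (when the three overlap conditions hold) is a similar but longer case check: here one partitions the vertex set according to membership in $M\setminus N$, $M\cap N$, $N\setminus M$, and the outside, and tracks where an edge $e$ can sit; the overlap hypotheses are exactly what one needs to ``transport'' a witness from one side to the other. This step --- reproving that $\mathscr{M}$ is partitive --- is where most of the work lies, and I expect the symmetric-difference case to be the fussiest.

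Next, (A2) is almost formal: if $M'\in\mathscr{M}(H[W'])$ and $W\subseteq W'$, let $M=M'\cap W$ and take $e\in E(H[W])$ crossing $M$. Then $e\in E(H[W'])$; if $e$ also crosses $M'$ we get a witness $m\in M'$ from $M'$ being a module, and since $e\subseteq W$ we have $m\in M'\cap W=M$, and the swapped edges $(e\setminus\{m\})\cup\{n\}$ for $n\in M$ lie in $E(H[W'])$ and inside $W$, hence in $E(H[W])$; if instead $e\subseteq M'$ then $e\setminus M\neq\emptyset$ forces $e$ to meet $M'\setminus W$, contradicting $e\subseteq W$, so this case is vacuous. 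Thus $M\in\mathscr{M}(H[W])$. For (A3), the inclusion $\subseteq$ is just (A2) applied with the extra hypothesis that $W$ itself is a module of $H[W']$; for $\supseteq$, take $M\in\mathscr{M}(H[W])$ with $M\subseteq W$ and an edge $e\in E(H[W'])$ crossing $M$: if $e\subseteq W$ we conclude directly from $M\in\mathscr{M}(H[W])$; if $e\not\subseteq W$, then since $W$ is a module of $H[W']$ and $e$ meets $M\subseteq W$ and $e\setminus W\neq\emptyset$, the module property of $W$ yields that $e\cap W$ is a single vertex, which must lie in $M$ — but then $e\cap M$ is that single vertex and the swaps $(e\setminus\{m\})\cup\{n\}$ are obtained by first swapping inside $W$ (legitimate since $W$ is a module, giving edges still in $E(H[W'])$) and the resulting edge is then in a configuration handled by $M$ being a module of $H[W']$ restricted appropriately; I would assemble these two facts carefully.

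Finally, (A4) and (A5) concern enlarging $W$ to $W\cup M'$ where $M'\in\mathscr{M}(H[W'])$ meets $W$. For (A4), with $M\cap M'=\emptyset$: take $e\in E(H[W\cup M'])$ crossing $M$. If $e\subseteq W$, done by $M\in\mathscr{M}(H[W])$. Otherwise $e$ meets $M'\setminus W$, so $e$ crosses $M'$ (it also meets $W$, as it meets $M\subseteq W$ and $M\cap M'=\emptyset$), hence $M'$ being a module of $H[W']$ gives a unique witness $m'=e\cap M'$; replacing the $M'$-part of $e$ by any vertex of $M'\cap W$ produces an edge $\tilde e\in E(H[W'])$ with $\tilde e\subseteq W$ and $\tilde e\cap M=e\cap M$, so $M\in\mathscr{M}(H[W])$ supplies the witness $m\in M$ and the swaps inside $W$; pushing these swaps back through the $M'$-substitution (again using that $M'$ is a module) yields the required edges in $E(H[W\cup M'])$. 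For (A5), with $M\cap M'\neq\emptyset$, one shows $M\cup M'\in\mathscr{M}(H[W\cup M'])$: an edge $e$ crossing $M\cup M'$ either lies in $W$ — then it crosses $M$, giving a witness in $M\subseteq M\cup M'$ — or meets $M'\setminus W\subseteq (M\cup M')$, in which case it crosses $M'$ and the witness from $M'$ lies in $M'\subseteq M\cup M'$; in both cases one checks the swap property of $M\cup M'$ by combining the swap properties of $M$ (inside $W$) and of $M'$, the hypothesis $M\cap M'\neq\emptyset$ being what glues the two witnesses/swap systems together. The main obstacle throughout is purely organizational: keeping straight, in each axiom, exactly which edges lie in which induced subhypergraph, and making sure every swapped edge one writes down actually belongs to the ambient edge set in question.
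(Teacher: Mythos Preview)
Your approach is essentially the paper's own: verify (A1) via the standard partitive-family closure lemmas (intersection, union, difference, symmetric difference), and then check (A2)--(A5) by case analysis on where a crossing edge sits relative to $W$, $M$, and $M'$. The paper isolates each axiom as a separate lemma (Lemmas~\ref{lem_(A2)}--\ref{lem_(A5)} together with Proposition~\ref{prop_Delta} for (A1)), but the arguments match yours in substance.

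One small slip in your sketch of (A5): when $e\subseteq W$ and $e$ crosses $M\cup M'$, it is \emph{not} automatic that $e$ crosses $M$ --- you may have $e\cap M=\emptyset$ while $e\cap(M'\cap W)\neq\emptyset$, in which case the witness must come from $M'$, not from $M$. The paper's dichotomy splits instead on whether $e\cap M'\neq\emptyset$ or $e\cap M'=\emptyset$, which avoids this pitfall: in the first case one uses $M'$ to find the witness $m'$ and then routes the swaps for $n\in M$ through a point of $M\cap M'$ (exactly your ``gluing'' idea); in the second case $e\subseteq W$ genuinely forces $e$ to cross $M$, and after one swap through $M\cap M'$ the first case applies. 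Your plan contains all the ingredients for this repair, but the case split as written needs to be reorganized.
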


Let $H$ be a hypergraph. 
By Proposition~\ref{properties_modules}, $\emptyset$, $V(H)$ and $\{v\}$, where $v\in V(H)$, are modules of $H$, called {\em trivial}. 
A hypergraph $H$ is {\em indecomposable} if all its modules are trivial, otherwise it is {\em decomposable}. 
A hypergraph $H$ is {\em prime} if it is indecomposable with $v(H)\geq 3$. 

To state Gallai's modular decomposition theorem, we need to define the quotient of a hypergraph by a modular partition (see Section~\ref{section_tournament}). 

\begin{defi}\label{quotient_hyper}
Let $H$ be a hypergraph. 
A partition $P$ of $V(H)$ is a {\em modular partition} of $H$ if $P\subseteq\mathscr{M}(H)$. 
Given a modular partition $P$ of $H$, 
the {\em quotient} $H/P$ of $H$ by $P$ is defined on $V(H/P)=P$ as follows. 
For $\mathcal{E}\subseteq P$, $\mathcal{E}\in E(H/P)$ if $|\mathcal{E}|\geq 2$, and there exists $e\in E(H)$ such that $\mathcal{E}=\{X\in P:X\cap e\neq\emptyset\}$. 
\end{defi}

As for tournaments, we introduce the following strengthening of the notion of a module. 
Let $H$ be a hypergraph. 
A module $M$ of $H$ is {\em strong} if for every module $N$ of $H$, we have 
$$\text{if $M\cap N\neq\emptyset$, then $M\subseteq N$ or $N\subseteq M$.}$$

\begin{nota}\label{nota_Pi_hyper}
We denote by $\Pi(H)$ the set of proper strong modules of $H$ that are maximal under inclusion. 
Clearly, $\Pi(H)$ is a modular partition of $H$ when $v(H)\geq 2$. 
\end{nota}

Gallai's modular decomposition theorem for hypergraphs follows. It is the analogue of Theorem~\ref{Th Gallai}. 

\begin{thm}\label{Thbis_Gallai}
Given a hypergraph $H$ with $v(H)\geq 2$, $H/\Pi(H)$ is an empty hypergraph, a prime hypergraph or a complete graph (i.e. $E(H/\Pi(H))=\binom{\Pi(H)}{2}$). 
\end{thm}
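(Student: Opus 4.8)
The plan is to follow the classical route to Gallai's decomposition theorem, now applied to the partitive family $\mathscr{M}(H)$, isolating the one genuinely new ingredient. Throughout put $Q=\Pi(H)$ and $\mathcal{H}=H/Q$, so $v(\mathcal{H})=|Q|\geq 2$. The first step is to record the standard correspondence between modules of a quotient and modules of the original hypergraph: for any modular partition $P$ of $H$, a subset $\mathcal{N}\subseteq P$ belongs to $\mathscr{M}(H/P)$ if and only if $\bigcup\mathcal{N}\in\mathscr{M}(H)$. (This is routine from Definitions~\ref{defi_module_hyper} and~\ref{quotient_hyper}; if it is not already available I would state it as a short lemma, the less immediate direction using that the blocks of $P$ are modules of $H$ and the axioms (A1)--(A5).) In particular, after identifying the blocks of $Q$ with their elements, $\mathscr{M}(\mathcal{H})$ is precisely the subfamily of $\mathscr{M}(H)$ made of the modules of $H$ that are unions of blocks of $Q$.

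The second step invokes the structure theorem for partitive families (cf.~\cite{CHM81}) applied to $\mathscr{F}=\mathscr{M}(H)$, which is partitive by Proposition~\ref{properties_modules}: the maximal proper strong members of $\mathscr{F}$ are the blocks of $Q=\Pi(H)$, and the subfamily of $\mathscr{F}$ formed by the unions of blocks of $Q$ is, viewed on $Q$, either $2^{Q}$ or else reduced to the trivial subsets of $Q$. With the first step this yields that $\mathscr{M}(\mathcal{H})$ is either $2^{V(\mathcal{H})}$ or the family of trivial subsets of $V(\mathcal{H})$. If one prefers a self-contained argument, this dichotomy is proved just as for tournaments: if $\mathcal{N}\in\mathscr{M}(\mathcal{H})$ is nontrivial then $M=\bigcup\mathcal{N}$ is a module of $H$, proper in $V(H)$ and strictly containing the maximal proper strong module of $H$ that lies in $\mathcal{N}$, so $M$ is not strong and hence overlaps some $N\in\mathscr{M}(H)$; because the blocks of $Q$ are strong one checks that $N$ is again a union of blocks of $Q$, and then iterating Boolean combinations of overlapping members of the partitive family forces every subset of $Q$ to be a module. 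The symmetric-difference clause in the definition of a partitive family is exactly what excludes any intermediate (``linear'') configuration on $Q$, leaving only the two extreme outcomes.

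The third step identifies the two cases. If $\mathscr{M}(\mathcal{H})$ is reduced to the trivial subsets, then $\mathcal{H}$ is indecomposable: when $|Q|\geq 3$ this is exactly the statement that $\mathcal{H}$ is prime, and when $|Q|=2$ a two-vertex hypergraph has, by Definition~\ref{quotient_hyper}, either no edge or the single edge $Q$, i.e. $\mathcal{H}$ is empty or $E(\mathcal{H})=\binom{Q}{2}$. If instead $\mathscr{M}(\mathcal{H})=2^{V(\mathcal{H})}$, I would prove the elementary lemma that any hypergraph $\mathcal{K}$ with $\mathscr{M}(\mathcal{K})=2^{V(\mathcal{K})}$ is empty or a complete graph: first, no edge can have more than two vertices, for if $e\in E(\mathcal{K})$ and $a\in e$ then $e\setminus\{a\}$ is a module of $\mathcal{K}$ meeting $e$ in at least two vertices, which contradicts Definition~\ref{defi_module_hyper}; hence $\mathcal{K}$ is a graph, and if $\{p,q\}\in E(\mathcal{K})$ then applying the module condition to the pairs $\{p,r\}$ and $\{q,r\}$ for a third vertex $r$ shows $\{q,r\},\{p,r\}\in E(\mathcal{K})$, and then to the pair $\{p,s\}$ with the edge $\{p,r\}$ shows $\{r,s\}\in E(\mathcal{K})$, so $E(\mathcal{K})=\binom{V(\mathcal{K})}{2}$. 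The three alternatives of the statement follow.

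The step I expect to be the main obstacle is the dichotomy of the second step: the correspondence and the characterization of hypergraphs with $\mathscr{M}=2^{V}$ are bookkeeping, but ruling out every module structure on $Q$ strictly between ``all trivial'' and ``all subsets'' — and, while doing so, verifying that the auxiliary overlapping modules produced along the way are again unions of blocks of $Q$ — is where the actual argument sits. If the partitive-family form of Gallai's theorem has already been recorded in the paper, this step reduces to a reference.
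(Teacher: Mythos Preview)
Your plan is correct but the route differs from the paper's. The paper first isolates a stand\-alone theorem (Theorem~\ref{Th-bis_Gallai}): a hypergraph has only trivial strong modules iff it is empty, prime, or a complete graph. That theorem is proved by hand: one picks a maximal nontrivial module $M$, shows (via Lemmas~\ref{lem_union} and~\ref{lem_moins}) that $\{M,V(H)\setminus M\}$ is a modular bipartition, and then in the ``empty quotient'' case appeals to connected components (Lemma~\ref{lem_connected}) while the ``complete quotient'' case is reduced to the empty one by passing to the edge-complement $H^c$. Theorem~\ref{Thbis_Gallai} then follows in two lines by applying this to $H/\Pi(H)$, after using Proposition~\ref{prop2_partition} to see that all strong modules of the quotient are trivial.

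Your approach instead pushes the partitive-family viewpoint all the way: you establish the dichotomy on $\mathscr{M}(\mathcal{H})$ directly (either trivial subsets only, or $2^{V(\mathcal{H})}$), and your third step is a clean direct characterization of hypergraphs with $\mathscr{M}=2^V$. This buys you a proof that is conceptually closer to \cite{CHM81} and avoids the component/complement detour; the paper's version, in return, is entirely self-contained and produces the auxiliary Theorem~\ref{Th-bis_Gallai}, which it reuses later (in the proof of Theorem~\ref{same_primality}). Your verification that the overlapping module $N$ in step~2 is again a union of blocks of $Q$ (via strongness of the blocks) is exactly the point that needs care, and you have it; the ``iterate Boolean combinations'' argument you sketch is the standard one and goes through because the symmetric-difference axiom is available.
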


A realization of a 3-uniform hypergraph is defined as follows. 
To begin, we associate with each tournament a 3-uniform hypergraph in the following way. 

\begin{defi}\label{C_3}
The {\em $3$-cycle} is the tournament $C_3=(\{0,1,2\},\{01,12,20\})$. 
Given a tournament $T$, the {\em $C_3$-structure} of $T$ is the 3-uniform hypergraph $C_3(T)$ defined on 
$V(C_3(T))=V(T)$ by 
$$E(C_3(T))=\{X\subseteq V(T):T[X]\ \text{is isomorphic to}\ C_3\}\ \text{(see \cite{BILT04})}.$$
\end{defi}

\begin{defi}
Given a 3-uniform hypergraph $H$, a tournament $T$, with $V(T)=V(H)$, {\em realizes} $H$ if $H=C_3(T)$. 
We say also that $T$ is a {\em realization} of $H$. 
\end{defi}

Whereas a realizable 3-uniform hypergraph and its realizations do not have the same modules, they share the same strong modules. 

\begin{thm}\label{same_strong_modules}
Consider a realizable 3-uniform hypergraph $H$. 
Given a realization $T$ of $H$, $H$ and $T$ share the same strong modules. 
\end{thm}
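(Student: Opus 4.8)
The plan is to establish the two inclusions between the strong modules of $H$ and those of $T$ via the bridge supplied by the modular covering structure of Proposition~\ref{properties_modules} and Gallai's theorem (Theorem~\ref{Thbis_Gallai}). First I would recall the easy direction noted in the introduction: every module of $T$ is a module of $H = C_3(T)$, because a 3-cycle pattern on a triple meeting a module $M$ of $T$ in exactly one vertex is preserved when that vertex is swapped for any other vertex of $M$. Thus $\mathscr{M}(T) \subseteq \mathscr{M}(H)$. The nontrivial content is that a \emph{strong} module of $H$ is a module of $T$, and conversely that a strong module of $T$ is strong in $H$; I would handle both by analysing how the (generally larger) family $\mathscr{M}(H)$ sits above $\mathscr{M}(T)$.

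The key step is to invoke the characterization of the modules of $H$ that are not modules of $T$ — the result the introduction attributes to Section~\ref{real_dec} (``we characterize the modules of $H$ that are not modules of $T$''). I would use that characterization to show that any such ``extra'' module $M \in \mathscr{M}(H) \setminus \mathscr{M}(T)$ must \emph{overlap} some module of $T$ (equivalently, some strong module of $H$): intuitively, the extra modules arise exactly at the nodes of the decomposition tree where $H/\Pi$ is a complete graph (the ``linear''-type quotient for $3$-cycle structures), where arbitrary unions of parts become modules of $H$ without being modules of $T$, and such unions necessarily overlap sibling parts. Consequently an extra module of $H$ can never be strong in $H$, which gives $\mathscr{M}_{\mathrm{strong}}(H) \subseteq \mathscr{M}(T)$; since a strong module of $H$ is in particular a module, and it is maximal-avoiding-overlaps inside the whole family $\mathscr{M}(H) \supseteq \mathscr{M}(T)$, it remains strong relative to the smaller family $\mathscr{M}(T)$.

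For the reverse inclusion, let $N$ be a strong module of $T$; then $N \in \mathscr{M}(T) \subseteq \mathscr{M}(H)$. To see $N$ is strong in $H$, take any $M \in \mathscr{M}(H)$ with $M \cap N \neq \emptyset$; if $M \in \mathscr{M}(T)$ we are done by strength of $N$ in $T$. If $M$ is one of the extra modules, I would again use the Section~\ref{real_dec} characterization to locate $M$ within a single tree node whose quotient is complete, note that the strong module $N$ either sits below that node, equals it, or lies above it (tree structure), and in each case check that $M \subseteq N$ or $N \subseteq M$. Here the axioms (A2)--(A5) of the modular covering, together with Theorem~\ref{Thbis_Gallai} applied along the chain of strong modules from $N$ upward, give exactly the nesting needed. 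Finally I would combine both inclusions.

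The main obstacle I anticipate is making precise and exploiting the classification of ``extra'' modules of $H$ from Section~\ref{real_dec}: the whole argument rests on the fact that an extra module is confined to a single complete-quotient node and is a union of parts there, so that it provably overlaps a sibling and hence fails to be strong in $H$. Once that structural fact is in hand, both inclusions reduce to routine tree bookkeeping using Gallai's theorem and the partitive-family axioms; without it, one cannot rule out an extra module being strong. A secondary subtlety is the edge case $v(H) \leq 2$ and the trivial modules, which should be dispatched separately at the start.
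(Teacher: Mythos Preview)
Your strategy matches the paper's: use the Section~\ref{real_dec} result (Proposition~\ref{P1_real_dec}) to show that a module of $H$ not in $\mathscr{M}(T)$ is never strong in $H$, giving the forward inclusion (this is the paper's Lemma~\ref{lem_same_strong_modules}); then for the reverse, compare a strong module $N$ of $T$ against the smallest strong $H$-module $\widetilde{M}^H$ enclosing an overlapping extra module $M$. Two corrections are needed.

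First, the extra modules occur where the $H$-quotient is \emph{empty} ($\varepsilon_H=\medcircle$), not where it is a complete graph: Proposition~\ref{P1_real_dec}(4) gives $\varepsilon_H(\widetilde{M}^H)=\medcircle$ and $|\Pi(H[\widetilde{M}^H])|\geq 3$. For a $3$-uniform $H$ the quotient $H[X]/\Pi(H[X])$ is again $3$-uniform, so the label $\bullet$ in fact never appears. Your parenthetical ``linear-type quotient'' shows you have the right picture; the $T$-quotient is a linear order precisely because the $H$-quotient is empty.

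Second, in the reverse direction the phrase ``in each case check that $M\subseteq N$ or $N\subseteq M$'' hides the one nontrivial idea. When $N\subsetneq\widetilde{M}^H$, the paper uses the forward inclusion to see that $\Pi(H[\widetilde{M}^H])$ is also a modular partition of $T[\widetilde{M}^H]$, with $T$-quotient a linear order; then $N/\Pi(H[\widetilde{M}^H])$ is a strong module of that linear order (Proposition~\ref{prop_strong_quotient_tournaments}), hence trivial by Theorem~\ref{tournament_ThB_Gallai}, hence a singleton. So $N$ lies inside a single block, while $M$, being a module of $H$ meeting at least two blocks (each block is a strong module of $H$ by Proposition~\ref{prop_strong_hypergraphs}, and $|M/\Pi|\geq 2$ by minimality of $\widetilde{M}^H$), is a union of blocks; thus $N\subseteq M$. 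Invoking only (A2)--(A5) and Theorem~\ref{Thbis_Gallai} on the $H$-side does not yield this; the interval argument on the $T$-side is essential.
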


The next result follows from Theorems~\ref{Thbis_Gallai} and \ref{same_strong_modules}. 

\begin{thm}\label{same_primality}
Consider a realizable 3-uniform hypergraph $H$. 
For a realization $T$ of $H$, we have $H$ is prime if and only if $T$ is prime.
\end{thm}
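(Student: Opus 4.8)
The statement to prove is Theorem~\ref{same_primality}: for a realizable $3$-uniform hypergraph $H$ with realization $T$, $H$ is prime if and only if $T$ is prime. I would deduce this from Theorem~\ref{same_strong_modules} together with Gallai's modular decomposition theorem (Theorem~\ref{Thbis_Gallai} for the hypergraph side, Theorem~\ref{Th Gallai} for the tournament side), using the fact that primality of a structure with at least three vertices is detectable from its strong modules alone.

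First I would observe that the notion ``prime'' depends only on the poset of strong modules. Concretely, for a structure $X$ (hypergraph or tournament) with $v(X)\geq 3$, $X$ is prime precisely when its only strong modules are the trivial ones: the singletons, the empty set, and $V(X)$ itself. The nontrivial direction of this observation is that if $X$ has a nontrivial module at all, then it has a nontrivial strong module. This is the standard argument: among all modules $M$ with $2\leq|M|\leq v(X)-1$, one can always find one that is strong — for instance by taking an inclusion-minimal element of a suitable family, or by invoking that the strong modules form the internal nodes of the modular decomposition tree and every module is a union of children of some strong module. I would cite or reprove this from the partitive-family structure established in Proposition~\ref{properties_modules}.

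Given that observation, the equivalence is immediate: since $V(T)=V(H)$, we have $v(T)=v(H)$, so we may assume $v(H)=v(T)\geq 3$ (the case $v(H)<3$ makes ``prime'' vacuously false for both). By Theorem~\ref{same_strong_modules}, $H$ and $T$ have exactly the same strong modules. Hence $H$ has only trivial strong modules if and only if $T$ does, which by the observation means $H$ is prime if and only if $T$ is prime. One should note that the trivial strong modules (singletons, $\emptyset$, and the whole vertex set) are automatically the same for both because they depend only on the common vertex set, so the coincidence of strong modules really does force the coincidence of primality.

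**Main obstacle.** The only genuine content beyond quoting Theorem~\ref{same_strong_modules} is the lemma that ``not prime'' implies ``has a nontrivial strong module.'' For tournaments this is classical and presumably recalled in Section~\ref{section_tournament}; for hypergraphs it needs the partitive-family machinery of Proposition~\ref{properties_modules} and the structure of $\Pi(H)$ from Notation~\ref{nota_Pi_hyper}. I expect the cleanest route is: if $H$ is decomposable with $v(H)\geq 3$, then $\Pi(H)$ is a nontrivial modular partition, so $H$ has a proper nonsingleton strong module (an element of $\Pi(H)$ of size $\geq 2$) unless $\Pi(H)$ consists only of singletons — but in that case $H/\Pi(H)$ is $H$ itself and must, by Theorem~\ref{Thbis_Gallai}, be empty, prime, or complete; empty or complete with $v(H)\geq 3$ forces a nontrivial module (indeed every subset is a module), contradicting... well, that is exactly consistency, so the finish is that $\Pi(H)$ has an element of size $\geq 2$ whenever some nontrivial module exists. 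I would make this dichotomy precise rather than hand-wave it, as it is the load-bearing step.
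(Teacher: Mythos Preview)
Your core observation---that primality is determined by the strong modules alone---is false, and this is precisely the point where your argument breaks. An empty hypergraph on three vertices (or a linear order on three vertices, on the tournament side) is decomposable, since every subset is a module, yet all of its strong modules are trivial: any two-element subset overlaps another two-element subset without inclusion, so no two-element subset is strong. This is exactly the content of Theorem~\ref{Th-bis_Gallai} (resp.\ Theorem~\ref{tournament_ThB_Gallai}): having only trivial strong modules characterizes not the prime structures but the prime \emph{or} empty \emph{or} complete-graph hypergraphs (resp.\ prime or linear-order tournaments). You noticed this yourself in your ``main obstacle'' paragraph and trailed off without a resolution; the lemma you were trying to prove (``decomposable implies a nontrivial strong module'') is simply not true.

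The paper's proof avoids this trap by treating the two directions asymmetrically. For $H$ prime $\Rightarrow$ $T$ prime it bypasses strong modules entirely and uses the elementary fact that every module of $T$ is a module of $H$. For $T$ prime $\Rightarrow$ $H$ prime it does use Theorem~\ref{same_strong_modules} and Theorem~\ref{Thbis_Gallai} to conclude that $H$ is empty, prime, or a complete graph, but then it \emph{rules out} the two degenerate cases by an additional argument: since $T$ is prime it is not a linear order, hence contains a $3$-cycle, hence $E(H)=E(C_3(T))$ contains a $3$-element edge, so $H$ is neither empty nor a graph. That last step is the missing idea in your proposal; once you supply it (on both sides if you insist on the symmetric route, or just on the $T\Rightarrow H$ side if you use the module inclusion for the other), the proof goes through.
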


Lastly, we characterize the realizable 3-uniform hypergraphs. 
To begin, we establish the following theorem by using the modular decomposition tree. 

\begin{thm}\label{realiza_dec}
Given a 3-uniform hypergraph $H$, $H$ is realizable if and only if for every $W\subseteq V(H)$ such that $H[W]$ is prime, $H[W]$ is realizable. 
\end{thm}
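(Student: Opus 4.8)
The plan is to prove the two implications separately. The ``only if'' direction is immediate: if a tournament $T$ realizes $H$, then for every $W\subseteq V(H)$ a $3$-element subset of $W$ induces a $3$-cycle in $T$ precisely when it does so in $T[W]$, hence $C_3(T[W])=H[W]$; in particular every prime induced subhypergraph of $H$ is realizable. The substance of the statement is the ``if'' direction, which I would prove by induction on $v(H)$, using Gallai's modular decomposition theorem for hypergraphs (Theorem~\ref{Thbis_Gallai}).

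So assume that every prime induced subhypergraph of $H$ is realizable. If $v(H)\leq 2$, then $H$ has no edge and any tournament on $V(H)$ realizes it; if $H$ is prime, apply the hypothesis with $W=V(H)$. Hence assume $H$ is decomposable with $v(H)\geq 3$, and consider the partition $\Pi(H)$ of Notation~\ref{nota_Pi_hyper}; since $V(H)\notin\Pi(H)$ we have $|\Pi(H)|\geq 2$, so each $X\in\Pi(H)$ satisfies $v(H[X])<v(H)$. The first step is a structural remark about any modular partition $P$ of a $3$-uniform hypergraph: by Definition~\ref{defi_module_hyper}, an edge $e$ of $H$ that meets more than one block of $P$ meets each block it touches in exactly one vertex, so that $e$ meets exactly three blocks, with one vertex in each. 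In particular $H/\Pi(H)$ is again $3$-uniform, hence is not a nonempty complete graph, and Theorem~\ref{Thbis_Gallai} leaves only the possibilities that $H/\Pi(H)$ is empty or prime. Next, choosing a transversal $W=\{x_X:X\in\Pi(H)\}$ with $x_X\in X$, the same module condition applied one block at a time transports any edge of $H$ with one vertex in each of three blocks onto the corresponding representatives, which shows that $x_X\mapsto X$ is an isomorphism from $H[W]$ onto $H/\Pi(H)$. Therefore, if $H/\Pi(H)$ is prime then $H[W]$ is prime and the hypothesis provides a realization of $H[W]$, hence of $H/\Pi(H)$; and if $H/\Pi(H)$ is empty, any transitive tournament on $\Pi(H)$ realizes it. In both cases we obtain a tournament $\bar T$ on vertex set $\Pi(H)$ with $C_3(\bar T)=H/\Pi(H)$. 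Moreover, since the prime induced subhypergraphs of each $H[X]$ are prime induced subhypergraphs of $H$, the induction hypothesis yields, for each $X\in\Pi(H)$, a tournament $T_X$ with $C_3(T_X)=H[X]$.

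It then remains to glue these tournaments together. I would take $T$ to be the lexicographic composition of the family $(T_X)_{X\in\Pi(H)}$ over $\bar T$: that is, $V(T)=V(H)$, $T[X]=T_X$ for each block $X$, and every arc between two distinct blocks is oriented as the corresponding arc of $\bar T$. The remaining, and most computational, step is to check that $C_3(T)=H$ by classifying a $3$-element subset $F$ of $V(T)$ according to the number of blocks of $\Pi(H)$ it meets. If $F\subseteq X$ for some block $X$, then $T[F]$ is a $3$-cycle iff $F\in E(C_3(T_X))=E(H[X])$ iff $F\in E(H)$. If $F$ meets exactly two blocks, then one vertex of $F$ is dominated by (or dominates) the other two because the cross arcs are homogeneous, so $T[F]$ is not a $3$-cycle, and by the structural remark $F\notin E(H)$. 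If $F$ meets three blocks, necessarily in one vertex each, then $T[F]$ is a $3$-cycle iff the associated triple of blocks belongs to $E(C_3(\bar T))=E(H/\Pi(H))$, which by the transport argument holds iff $F\in E(H)$. Combining these three cases gives $C_3(T)=H$, so $H$ is realizable, completing the induction.

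I expect the main obstacle to be the two linked facts underpinning the reduction: the structural remark that an edge of a $3$-uniform hypergraph meets any modular partition in $0$, $1$ or $3$ blocks, touching each of those blocks in a single vertex, and the resulting isomorphism $H/\Pi(H)\cong H[W]$ for a transversal $W$. Once these are established, the construction of $T$ and the verification $C_3(T)=H$ are a routine case analysis relying only on the elementary behaviour of $C_3$-structures under lexicographic composition of tournaments.
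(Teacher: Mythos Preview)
Your proof is correct and complete; the structural remark and the transversal isomorphism are precisely the contents of Remark~\ref{rem_partition}, so the obstacle you flag is already handled in the paper and is indeed routine.

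Your route, however, differs from the paper's. The paper does not argue by induction at the single level $\Pi(H)$; instead it invokes Proposition~\ref{construction_realization}, which sets up a bijection $\delta_H$ between realizations of $H$ and choices, for every node $Y\in\mathscr{D}_{\geq 2}(H)$ of the modular decomposition tree, of a realization of $H[Y]/\Pi(H[Y])$. The proof of Theorem~\ref{realiza_dec} then reduces to exhibiting one such choice function $f$, which is immediate once (as you also observe) each quotient is either empty or prime and, via a transversal, isomorphic to an induced subhypergraph. Your argument is more elementary and self-contained: it needs only the top-level partition $\Pi(H)$, the lexicographic composition of tournaments, and a short case analysis, and it avoids the machinery of $\mathscr{Q}(H)$ and $\delta_H$ entirely. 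The paper's approach, on the other hand, packages the gluing once and for all in Proposition~\ref{construction_realization}, which then also yields the counting formula of Corollary~\ref{counting_realizations} for free; your inductive construction proves realizability but does not directly give the exact count $|\mathscr{R}(H)|$.
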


We conclude by characterizing the prime and 3-uniform hypergraphs that are realizable (see Theorems~\ref{realiza_critical} and \ref{realiza_non_critical}).

\section{Background on tournaments}\label{section_tournament}

A tournament is a {\em linear order} if it does not contain $C_3$ as a subtournament. 
Given $n\geq 2$, the usual linear order on $\{0,\ldots,n-1\}$ is the tournament 
$L_n=(\{0,\ldots,n-1\},\{pq:0\leq p<q\leq n-1\})$. 
With each tournament $T$, associate its {\em dual} $T^\star$ defined on $V(T^\star)=V(T)$ by 
$A(T^\star)=\{vw:wv\in A(T)\}$.

\subsection{Modular decomposition of tournaments}\label{subs Modular decomposition of tournaments}

Let $T$ be a tournament. 
A subset $M$ of $V(T)$ is a {\em module} \cite{S92} of $T$ provided that for any $x,y\in M$ and $v\in V(T)$, 
if $xv,vy\in A(T)$, then $v\in M$. 
Note that the notions of a module and of an interval coincide for linear orders. 

\begin{nota}\label{module_tournaments}
Given a tournament $T$, the set of the modules of $T$ is denoted by $\mathscr{M}(T)$. 
\end{nota}

We study the set of the modules of a tournament. 
We need the following weakening of the notion of a partitive family. 
Given a set $S$, 
a family $\mathscr{F}$ of subsets of $S$ is a {\em weakly partitive family} on $S$ if it satisfies the following assertions.  
\begin{itemize}
\item $\emptyset\in\mathscr{F}$, $S\in\mathscr{F}$, and for every $x\in S$, 
$\{x\}\in\mathscr{F}$.
\item For any $M,N\in\mathscr{F}$, $M\cap N\in\mathscr{F}$.
\item For any $M,N\in\mathscr{F}$, if $M\cap N\neq\emptyset$, then 
$M\cup N\in\mathscr{F}$.
\item For any $M,N\in\mathscr{F}$, if $M\setminus N\neq\emptyset$, then 
$N\setminus M\in\mathscr{F}$.
\end{itemize}

The set of the modules of a tournament is a weakly partitive family (for instance, see~\cite{ER90b}). 
We generalize the notion of a weakly partitive family as follows. 

\begin{defi}\label{weak_covering}
Let $S$ be a set. 
A {\em weak modular covering} of $S$ is a function $\mathfrak{M}$ which associates with each 
$W\subseteq S$ a set $\mathfrak{M}(W)$ of subsets of $W$, and which satisfies Assertions (A2),...,(A5) (see Definition~\ref{covering}), and the following assertion. 
For each $W\subseteq S$, $\mathfrak{M}(W)$ is a weakly partitive family on $W$. 
\end{defi}

Since the proof of the next proposition is easy and long, we omit it. 

\begin{prop}\label{properties_modules_tournaments}
Given a tournament $T$, the function defined on $2^{V(H)}$, which maps each $W\subseteq V(H)$ to 
$\mathscr{M}(T[W])$, 
is a weak modular covering of $V(T)$. 
\end{prop}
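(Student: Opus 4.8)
Looking at this problem, I need to prove Proposition \ref{properties_modules_tournaments}: that the function $W \mapsto \mathscr{M}(T[W])$ is a weak modular covering of $V(T)$, meaning it satisfies (A2)–(A5) and each $\mathscr{M}(T[W])$ is a weakly partitive family.

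The plan is as follows.

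\medskip

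\textbf{Plan.} The proof is a verification of each of the listed properties, so I would organize it as a sequence of short lemmas, one per assertion. I would rely on the following elementary facts about tournament modules, all immediate from the definition ($M$ is a module of $T$ iff no vertex $v \notin M$ ``splits'' a pair $x,y \in M$ via $xv, vy \in A(T)$): (i) if $M$ is a module of $T$ and $W \supseteq M'$ for a subset $W$ of $V(T)$, then $M \cap W$ is a module of $T[W]$ — this is (A2), and it holds because any splitter in $W$ of a pair in $M\cap W$ would already split that pair in $T$; (ii) a module $M$ of $T[W]$ with $M \subseteq W' \subseteq W$ is a module of $T[W']$, and conversely if additionally $W'$ is itself a module of $T[W]$, then the modules of $T[W']$ contained in $W'$ are exactly the modules of $T[W]$ contained in $W'$ — this gives (A3); the forward direction is restriction again, and the backward direction uses that a splitter $v$ of a pair $x,y \in M \subseteq W'$ lying in $W \setminus W'$ would, since $W'$ is a module of $T[W]$, force $v \in W'$, a contradiction.

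\medskip

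For (A4): given $M \in \mathscr{M}(T[W])$, $M' \in \mathscr{M}(T[W'])$ with $M \cap M' = \emptyset$ and $M' \cap W \neq \emptyset$, I must show $M$ is a module of $T[W \cup M']$. Take $x, y \in M$ and a potential splitter $v \in (W \cup M') \setminus M$; if $v \in W$ we are done since $M$ is a module of $T[W]$, so suppose $v \in M' \setminus W$. Pick any $w \in M' \cap W$; since $M' $ is a module of $T[W']$ and $v,w \in M'$, the vertex $x$ (which lies outside $M'$ because $M \cap M' = \emptyset$) relates the same way to $v$ and to $w$; similarly for $y$. Hence $w \in W \setminus M$ splits $x,y$ in $T[W]$, contradicting that $M$ is a module of $T[W]$. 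For (A5), the argument is parallel: with $M \cap M' \neq \emptyset$, fix $z \in M \cap M'$; to see $M \cup M'$ is a module of $T[W \cup M']$, take $x,y \in M \cup M'$ and a splitter $v$ outside $M \cup M'$. If $x,y$ are both in $M$, then $v \notin M$ and $v \in W$ forces a contradiction directly; if $v \in M' \setminus W$, compare $v$ to $z \in M'$ using that $M'$ is a module, reducing to a splitter in $W \setminus M$. The cases where $x$ or $y$ lies in $M' \setminus M$ are handled symmetrically using $z$ and the module property of $M'$ over $W'$, together with the fact that $v \notin M'$ whenever $v \notin M \cup M'$ but $v \in W' $.

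\medskip

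The weakly-partitive axioms for a fixed $\mathscr{M}(T[W])$ are standard — $\emptyset$, $W$, and singletons are trivially modules; intersections of modules are modules; the union of two overlapping modules is a module; and if $M \setminus N \neq \emptyset$ then $N \setminus M$ is a module — and I would cite \cite{ER90b} rather than reprove them, as the excerpt already does. \textbf{The main obstacle} is (A4) and (A5): unlike the restriction-type axioms (A2), (A3), these require \emph{extending} the ground set from $W$ to $W \cup M'$, so I must argue that the new vertices (those in $M' \setminus W$) cannot split a pair already controlled over $W$; the trick each time is to transport a would-be splitter $v \in M' \setminus W$ to an honest splitter $w \in M' \cap W$ using that $M'$ is a module of $T[W']$ and that the split pair lies outside $M'$ — one must be careful to check in each sub-case that the relevant vertices genuinely lie outside $M'$, which is where the hypotheses $M \cap M' = \emptyset$ (for (A4)) and the choice of $z \in M \cap M'$ (for (A5)) are used.
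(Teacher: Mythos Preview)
Your proposal is correct. The paper itself omits the proof entirely, remarking only that it is ``easy and long,'' so there is nothing to compare against beyond confirming that your verification goes through---and it does. One small redundancy: in your sketch of (A5), the subcase ``if $v \in M' \setminus W$'' under $x,y \in M$ is vacuous, since any splitter $v$ lies outside $M \cup M'$ and hence outside $M'$, forcing $v \in W$ immediately; you seem to realize this later when you write ``$v \notin M'$ whenever $v \notin M \cup M'$,'' so the earlier case distinction can simply be dropped.
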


Let $T$ be a tournament. 
By Proposition~\ref{properties_modules_tournaments}, $\emptyset$, $V(T)$ and $\{v\}$, where $v\in V(T)$, are modules of $T$, called {\em trivial}. 
A tournament is {\em indecomposable} if all its modules are trivial, otherwise it is {\em decomposable}. 
A tournament $T$ is {\em prime} if it is indecomposable with $v(T)\geq 3$.  

We define the quotient of a tournament by considering a partition of its vertex set in modules. 
Precisely, let $T$ be a tournament. 
A partition $P$ of $V(T)$ is a \emph{modular partition} of $T$ if $P\subseteq\mathscr{M}(T)$.  
With each modular partition $P$ of $T$, associate the {\em quotient} $T/P$ of $T$ by $P$ defined on 
$V(T/P)=P$ as follows. 
Given $X,Y\in P$ such that $X\neq Y$, $XY\in A(T/P)$ if $xy\in A(T)$, where $x\in X$ and $y\in Y$. 

We need the following strengthening of the
notion of module to obtain an uniform decomposition theorem. 
Given a tournament $T$,
a subset $X$ of $V(T)$ is a {\em strong module} 
\cite{G,MP} of $T$ 
provided that $X$ is a module of $T$ and for every module $M$ of
$T$, if $X\cap M\neq\emptyset $, then $X\subseteq M$ or 
$M\subseteq X$.
With each tournament $T$, with $v(T)\geq 2$, associate 
the set $\Pi(T)$ of the maximal strong module of $T$ under the
inclusion amongst all the proper and strong modules of $T$.
Gallai's modular decomposition theorem follows. 

\begin{thm}[Gallai \cite{G,MP}]\label{Th Gallai} 
Given a tournament $T$ such that $v(T)\geq 2$, $\Pi(T)$ is a modular partition of $T$, and $T/\Pi(T)$ is a linear order or a prime tournament. 
\end{thm}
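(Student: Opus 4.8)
The plan is to follow the classical route to Gallai's theorem, based on the laminarity of strong modules and on the interplay between the modules of $T$ and those of a quotient. First, the strong modules of $T$ are pairwise laminar: if $X,Y$ are strong and $X\cap Y\neq\emptyset$, applying the definition of ``$X$ strong'' to the module $Y$ yields $X\subseteq Y$ or $Y\subseteq X$. Moreover, for each $v\in V(T)$ the singleton $\{v\}$ is a module (the defining condition is vacuous), is strong (any module meeting it contains $v$), and is proper because $v(T)\geq 2$. Hence every vertex lies in a maximal proper strong module, so the members of $\Pi(T)$ cover $V(T)$, while two distinct members of $\Pi(T)$ must be disjoint — otherwise laminarity and maximality would identify them. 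Thus $\Pi(T)$ is a partition of $V(T)$ into modules, i.e.\ a modular partition. Since $\Pi(T)\neq\{V(T)\}$ we have $|\Pi(T)|\geq 2$, and if $|\Pi(T)|=2$ then $T/\Pi(T)\cong L_2$ is a linear order; so assume from now on $|\Pi(T)|\geq 3$ and write $Q:=T/\Pi(T)$.

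The crux is that $Q$ has no nontrivial strong module. Suppose $\mathcal N\subseteq\Pi(T)$ were strong in $Q$ with $2\leq|\mathcal N|\leq|\Pi(T)|-1$, and put $N:=\bigcup\mathcal N\subseteq V(T)$; by the standard correspondence between modules of a tournament and those of its quotients, $N$ is a module of $T$. The key point is that, because each part $X\in\Pi(T)$ is itself a \emph{strong} module, any module $M$ of $T$ is either contained in a single part or is a union of parts: if $M$ meets $X$ and $M\not\subseteq X$, then strongness of $X$ forces $X\subseteq M$. With this, $N$ is strong in $T$: given a module $M$ of $T$ with $M\cap N\neq\emptyset$, either $M$ lies inside one part, which must then belong to $\mathcal N$, so $M\subseteq N$; or $M$ is a union of parts, the corresponding subset of $\Pi(T)$ is a module of $Q$ meeting the strong module $\mathcal N$, hence nested with it, so $M$ and $N$ are nested. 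But then $N$ is a proper strong module of $T$ properly containing each part composing it, contradicting the maximality of those parts in $\Pi(T)$.

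Since $v(Q)=|\Pi(T)|\geq 3$, it remains to establish the general claim: a tournament $Q$ with $v(Q)\geq 3$ and no nontrivial strong module is prime or a linear order. If $Q$ is indecomposable it is prime; so suppose $Q$ is decomposable and prove, by induction on $v(Q)$, that $Q$ is a linear order. First, the maximal proper modules of $Q$ of size $\geq 2$ pairwise overlap and any two of them have union $V(Q)$ (given such a module $M$ and any overlapping maximal proper module $M'$, maximality gives $M\cup M'=V(Q)$, and then any further such module is itself forced to overlap $M$). Secondly there are exactly two of them: three such modules $M_1,M_2,M_3$ would have pairwise disjoint nonempty complements $V(Q)\setminus M_i$, and picking one vertex from each complement would produce a $3$-vertex subtournament in which each vertex relates uniformly to the other two, so the three out-degrees inside the triple are all $0$ or $2$ yet sum to $3$ — impossible. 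Hence $V(Q)=B\sqcup D\sqcup A$, where $M_1=B\cup D$ and $M_2=A\cup D$ are the two maximal proper modules, $A,B$ are nonempty, and $A,B,D,M_1,M_2$ are all modules of $Q$. Since the modules of $Q[M_i]$ are exactly the modules of $Q$ contained in $M_i$, one checks that each of $Q[M_1],Q[M_2]$ is $L_2$ or inherits the hypothesis ``decomposable with no nontrivial strong module'', hence is a linear order; in these linear orders $A$, $B$, $D$ are intervals, which forces all $A$--$B$ edges to point the same way, and concatenating yields a linear order on $V(Q)$. Together with the earlier steps this proves the theorem.

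I expect the last step — the ``degenerate case'' — to be the main obstacle, as it is the genuine combinatorial content of Gallai's theorem. In particular, verifying that $Q[M_1]$ and $Q[M_2]$ really do inherit ``no nontrivial strong module'' requires a careful analysis of how a module of $Q$ that overlaps a supposed strong module of $Q[M_i]$ must interact with $M_1$ and $M_2$; alternatively, one simply invokes the classical references \cite{G,MP}.
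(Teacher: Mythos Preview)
Your overall structure matches the paper's: first establish that $\Pi(T)$ is a modular partition (laminarity of strong modules), then show that $T/\Pi(T)$ has only trivial strong modules, then invoke the characterization ``only trivial strong modules $\Rightarrow$ linear order or prime'' (the paper's Theorem~\ref{tournament_ThB_Gallai}). Your second paragraph correctly reproduces the content of Proposition~\ref{prop_strong_quotient_tournaments}(2) in this special case. The paper itself does not prove Theorem~\ref{tournament_ThB_Gallai} for tournaments, so the genuine content of your attempt is the third paragraph.

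That paragraph has a real gap, which you yourself flag. After locating the two maximal proper modules $M_1,M_2$ you assert that $Q[M_1]$ ``inherits the hypothesis `decomposable with no nontrivial strong module'\,'', but you do not prove this, and it is not immediate. Concretely: a nontrivial strong module $S$ of $Q[M_1]$ is a module of $Q$ but need not be strong in $Q$; the module $N$ of $Q$ that overlaps $S$ may satisfy $N\cap M_1\subseteq S$, in which case $N\cap M_1$ does not overlap $S$ inside $Q[M_1]$, and your contradiction evaporates. One can push further (using that $S$ strong in $Q[M_1]$ forces $S\subseteq D$ or $S\subseteq B$, then analysing $N\cup S$), but the argument becomes lengthy, and ``invoking the classical references'' defeats the purpose since Theorem~\ref{tournament_ThB_Gallai} \emph{is} the combinatorial heart of Gallai's theorem.

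A much shorter route---and the tournament analogue of the paper's proof of the hypergraph version (Theorem~\ref{Th-bis_Gallai})---avoids induction entirely. From a maximal nontrivial module $M$ you get, exactly as you did, a modular bipartition $\{M,V(Q)\setminus M\}$; hence $Q$ is not strongly connected. Now observe that the vertex set of each strongly connected component $C$ of $Q$ is a \emph{strong} module: if a module $N$ overlapped $V(C)$, then any $d\in V(C)\setminus N$ relates uniformly to $N$, forcing all arcs between $V(C)\setminus N$ and $N\cap V(C)$ to go one way and contradicting the strong connectivity of $C$. Since all strong modules of $Q$ are trivial and $Q$ is not strongly connected, every component is a singleton, so $Q$ is a linear order. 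This replaces your entire third paragraph and closes the gap.
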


Theorem~\ref{Th Gallai} is deduced from the following two results. 
We use the following notation. 

\begin{nota}\label{W/P}
Let $P$ be a partition of a set $S$. 
For $W\subseteq S$, $W/P$ denotes the subset $\{X\in P:X\cap W\neq\emptyset\}$ of $P$. 
For $Q\subseteq P$, set 
\begin{equation*}
\cup Q=\bigcup_{X\in Q}X.
\end{equation*}
\end{nota}

\begin{prop}\label{prop_strong_quotient_tournaments}
Given a modular partition $P$ of a tournament $T$, the following two assertions hold. 
\begin{enumerate}
\item If $M$ is a strong module of $T$, then $M/P$ is a strong module of $T/P$. 
\item Suppose that all the elements of $P$ are strong modules of $T$. 
If $\mathcal{M}$ is a strong module of $T/P$, then $\cup\mathcal{M}$ is a strong module of $T$. 
\end{enumerate}
\end{prop}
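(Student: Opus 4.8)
The plan is to prove the two assertions separately, in each case first verifying the module condition (which is the easier part, essentially a bookkeeping translation between $T$ and $T/P$) and then the maximality/comparability condition that upgrades ``module'' to ``strong module''. Throughout I would use Notation~\ref{W/P} and the standard fact that for a modular partition $P$ of a tournament $T$, a subset $\mathcal{N}$ of $P$ is a module of $T/P$ if and only if $\cup\mathcal{N}$ is a module of $T$; this gives a bijection between $\mathscr{M}(T/P)$ and the modules of $T$ that are unions of blocks of $P$. I would also record at the outset that if $M$ is any module of $T$ and $P$ is a modular partition, then $M/P$ is a module of $T/P$ (each block $X$ of $P$ either misses $M$, is contained in $M$, or meets $M$ in a set that, by the module property of both $M$ and $X$, behaves uniformly toward the rest), and $\cup(M/P)\supseteq M$ is again a module of $T$.

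For assertion~(1), let $M$ be a strong module of $T$. By the preliminary remark, $M/P$ is a module of $T/P$. To see it is strong, take a module $\mathcal{N}$ of $T/P$ with $(M/P)\cap\mathcal{N}\neq\emptyset$; then $N:=\cup\mathcal{N}$ is a module of $T$. From $(M/P)\cap\mathcal{N}\neq\emptyset$ we get a block $X\in P$ meeting $M$ and contained in $N$. I would like to conclude $M\cap N\neq\emptyset$: the point is that a strong module $M$ and a block $X\in P$ with $M\cap X\neq\emptyset$ must be comparable (both are modules, $M$ is strong), so either $X\subseteq M$, giving $X\subseteq M\cap N$, or $M\subseteq X\subseteq N$. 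In the first case $M\cap N\neq\emptyset$ and strongness of $M$ forces $M\subseteq N$ or $N\subseteq M$; since $N$ is a union of blocks, $M\subseteq N$ yields $M/P\subseteq\mathcal{N}$, and $N\subseteq M$ yields $\mathcal{N}\subseteq M/P$. In the second case $M\subseteq N$ already gives $M/P\subseteq\mathcal{N}$. Either way $M/P$ and $\mathcal{N}$ are comparable, so $M/P$ is strong.

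For assertion~(2), suppose every block of $P$ is a strong module of $T$, and let $\mathcal{M}$ be a strong module of $T/P$; set $M:=\cup\mathcal{M}$, a module of $T$. Take an arbitrary module $N$ of $T$ with $M\cap N\neq\emptyset$; I must show $M\subseteq N$ or $N\subseteq M$. First consider $N/P$, a module of $T/P$. If $\mathcal{M}\cap(N/P)\neq\emptyset$, strongness of $\mathcal{M}$ gives $\mathcal{M}\subseteq N/P$ or $N/P\subseteq\mathcal{M}$; the latter gives $N\subseteq\cup(N/P)\subseteq\cup\mathcal{M}=M$ and we are done, while the former gives $M=\cup\mathcal{M}\subseteq\cup(N/P)$, so it remains to rule out $\cup(N/P)\supsetneq N$ with $M\not\subseteq N$, i.e.\ to push $M\subseteq\cup(N/P)$ down to $M\subseteq N$. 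This is where the hypothesis that each block is strong enters: if some block $X\in\mathcal{M}$ is not contained in $N$ but $X\in N/P$, then $X\cap N\neq\emptyset$ and, $X$ being a strong module, $X$ and $N$ are comparable, forcing $N\subseteq X$; but then $N\subseteq X\subseteq\cup\mathcal{M}=M$ and again we are done. The remaining case is $\mathcal{M}\cap(N/P)=\emptyset$, meaning no block of $\mathcal{M}$ meets $N$, which contradicts $M\cap N\neq\emptyset$. Assembling these cases shows $M$ is comparable with every module $N$ it meets, hence $M$ is a strong module of $T$.

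The main obstacle, and the step to write most carefully, is the ``pushing down'' argument in assertion~(2): translating comparability at the quotient level ($M\subseteq\cup(N/P)$) into comparability at the vertex level ($M\subseteq N$ or $N\subseteq M$). This is precisely the point where one cannot dispense with the hypothesis that all blocks of $P$ are strong — without it, a block could meet $N$ in a proper nonempty subset and the inclusion would genuinely fail — so the proof must invoke strongness of the blocks exactly there. Everything else is a routine unwinding of the definitions together with the standard module/quotient correspondence, which may be quoted or proved in a line.
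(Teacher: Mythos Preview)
Your proposal is correct. The paper does not actually prove Proposition~\ref{prop_strong_quotient_tournaments} (it is stated without proof as background on tournaments), but it proves the identical hypergraph analogue, Proposition~\ref{prop2_partition}, and your argument agrees with that proof in outline. For assertion~(1) the arguments coincide; your detour through comparability of $M$ with the block $X$ is unnecessary, since $X\cap M\neq\emptyset$ and $X\subseteq N$ already give $M\cap N\neq\emptyset$ directly, but it does no harm. For assertion~(2) there is a small difference in the ``pushing down'' step once $\mathcal{M}\subseteq N/P$: the paper first separates out the cases $|\mathcal{M}|\le 1$ and then, using $|N/P|\ge 2$, invokes strongness of \emph{every} block meeting $N$ to deduce that $N$ is itself a union of blocks, whence $\cup\mathcal{M}\subseteq N$; you instead apply strongness only to the blocks lying in $\mathcal{M}$, arguing that each is either contained in $N$ or contains $N$, which reaches the conclusion without the size case split. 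Both routes work; yours is marginally slicker, while the paper's isolates the reusable fact that a module meeting at least two strong blocks must be a union of blocks.
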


\begin{thm}\label{tournament_ThB_Gallai} 
Given a tournament $T$, all the strong modules of $T$ are trivial if and only if $T$ is a linear order or a prime tournament. 
\end{thm}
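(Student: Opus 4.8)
The plan is to prove Theorem~\ref{tournament_ThB_Gallai} by establishing the two implications separately, the nontrivial one being that if every strong module of $T$ is trivial then $T$ is a linear order or a prime tournament. For the easy direction, suppose $T$ is a linear order or a prime tournament. If $T$ is prime, then by definition all modules of $T$ are trivial, hence a fortiori all strong modules are trivial. If $T$ is a linear order $L_n$, then the modules of $T$ are exactly the intervals of $\{0,\dots,n-1\}$; one checks directly that no interval of size strictly between $1$ and $n$ is strong, since such an interval $[i,j]$ overlaps the interval $[0,j]$ (or $[i,n-1]$) without being comparable to it. So only the trivial modules are strong.

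For the hard direction, I would argue by contraposition: assume $T$ is neither a linear order nor a prime tournament, and produce a nontrivial strong module. Since $T$ is not prime, either $v(T)\leq 2$, in which case one handles the degenerate cases by hand, or $v(T)\geq 3$ and $T$ is decomposable, so $T$ has a nontrivial module $M$. The idea is to extract from $M$ a nontrivial \emph{strong} module. The natural candidate is a minimal nontrivial module containing a fixed pair of vertices, or more robustly: among all nontrivial modules, consider one that is maximal under inclusion, and show that if the union of two overlapping such maximal modules were the whole of $V(T)$ for every choice, then $T$ would be forced to be a linear order. Concretely, I would invoke the weakly partitive structure of $\mathscr{M}(T)$ from Proposition~\ref{properties_modules_tournaments}: if $M,N$ are overlapping modules then $M\cup M$, $M\cap N$ and the symmetric-difference-type sets are modules too. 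Using this, take a maximal proper nontrivial module $M$; if $M$ is strong we are done, otherwise there is a module $N$ overlapping $M$ with $N\not\subseteq M$ and $M\not\subseteq N$, and then $M\cup N$ is a proper module (by maximality $M\cup N = V(T)$ is impossible unless... ) — here one must be careful, so the cleaner route is:

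\begin{itemize}
\item[(i)] Show that if $T$ has no nontrivial strong module but does have a nontrivial module, then the nontrivial modules form a ``cross-free-failure'' configuration forcing $V(T)$ to be covered by a chain-like or overlapping-interval family.
\item[(ii)] More precisely, fix two distinct vertices $a,b$ and let $M_{ab}$ be the smallest module containing $\{a,b\}$ (it exists since $\mathscr{M}(T)$ is closed under intersection and $V(T)\in\mathscr{M}(T)$). Prove that some such $M_{ab}$ is a proper nontrivial strong module unless every $M_{ab}=V(T)$; and that the condition ``$M_{ab}=V(T)$ for all $a\neq b$'' combined with ``$T$ has a nontrivial module'' is contradictory, OR forces $T$ to be a linear order.
\end{itemize}

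Actually the most economical plan is the following: suppose all strong modules of $T$ are trivial. If $v(T)\leq 2$, then $T$ is a linear order, done. Assume $v(T)\geq 3$. If $T$ is indecomposable it is prime, done. So assume $T$ is decomposable; I will derive that $T$ is a linear order. Let $\Pi(T)$ be the set of maximal proper strong modules — but these are all trivial by hypothesis, so $\Pi(T)=\{\{v\}:v\in V(T)\}$, whence $T/\Pi(T)\cong T$. Now I would like to say $T$ "equals its own quotient with no nontrivial strong structure". Instead, use Proposition~\ref{prop_strong_quotient_tournaments}(2) contrapositively together with a direct combinatorial lemma: \emph{a decomposable tournament all of whose strong modules are trivial is a linear order}. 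To prove this lemma, take a nontrivial module $M$ of minimal size; by minimality and the intersection-closure of $\mathscr{M}(T)$, $M$ is "linked" and one shows $T[M]$ is itself a linear order with every singleton the only proper strong module. Then because $M$ is not strong there is $N$ overlapping it; set $M' = M\cup N$ (a module by weak partitivity), and iterate: the family of nontrivial modules is totally ordered by a betweenness relation that realizes $T$ as $L_n$. The betweenness/linear-order extraction is the main obstacle: one must show that the overlap graph on nontrivial modules, being "connected" in the absence of strong modules, yields a consistent linear arrangement of $V(T)$, and that this arrangement is exactly the order given by the arcs of $T$ (using that $M\setminus N$ is a module when $M\setminus N\neq\emptyset$). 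I expect the bulk of the work, and the only genuinely delicate point, to be this final step: turning the chain of overlapping modules into an explicit linear order isomorphism $T\cong L_{v(T)}$, which amounts to checking that three vertices lying in a common ``small'' overlapping module can never induce $C_3$.
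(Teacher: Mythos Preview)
The paper does not prove Theorem~\ref{tournament_ThB_Gallai}; it is stated without proof in the background Section~\ref{section_tournament} as one of two known ingredients from which Gallai's Theorem~\ref{Th Gallai} is derived. So there is no ``paper's own proof'' to compare against directly. That said, the paper \emph{does} prove the hypergraph analogue, Theorem~\ref{Th-bis_Gallai}, and that argument transfers cleanly to tournaments: once one has the modular bipartition $\{M,V(T)\setminus M\}$ (which you also reach), one observes that $T$ is therefore not strongly connected; the strongly connected components of a tournament are always strong modules (this is the tournament counterpart of Remark~\ref{rem_connected} and Lemma~\ref{lem_connected}), hence singletons by hypothesis, so $T$ is isomorphic to the linear order on its components and we are done.

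Your outline contains the right opening move --- a maximal nontrivial module $M$, an overlapping $N$, and the resulting bipartition --- but then drifts. Two concrete problems. First, the sentence ``by maximality $M\cup N=V(T)$ is impossible unless\ldots'' is backwards: maximality of $M$ is precisely what \emph{forces} $M\cup N=V(T)$, and this is the key step, not an obstacle to be circumvented. Second, your ``most economical plan'' restates the goal as a lemma and then tries to prove it by taking a minimal nontrivial module $M$, claiming $T[M]$ is a linear order (this is true, since weak partitivity forces $|M\cap N|\le 1$ and $|M\setminus N|\le 1$ for any overlapping $N$, hence $|M|=2$; but you do not give this reason), and then ``iterating'' through overlapping modules to assemble a global linear order. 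You never specify the iteration, never argue that it reaches every vertex of $T$, and explicitly defer the check that no three vertices induce a $C_3$. That deferred check is the entire content of the hard direction, so as written the proposal is a plan with a genuine gap rather than a proof. The strongly-connected-component route sketched above closes that gap in one line and is the natural tournament analogue of what the paper does for hypergraphs.
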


\begin{defi}\label{tree_tournaments}
Given a tournament $T$, the set of the nonempty strong modules of $T$ is denoted by $\mathscr{D}(T)$. 
Clearly, $\mathscr{D}(T)$ endowed with inclusion is a tree called the {\em modular decomposition tree} of $T$. 
\end{defi}

Let $T$ be a tournament. 
The next proposition allows us to obtain all the elements of $\mathscr{D}(T)$ by using successively Theorem~\ref{Th Gallai} from $V(T)$ to the singletons.

\begin{prop}[Ehrenfeucht et al.~\cite{EHR99}]\label{prop_strong_tournaments}
Given a tournament $T$, consider a strong module $M$ of $T$. 
For every $N\subseteq M$, the following two assertions are equivalent
\begin{enumerate}
\item $N$ is a strong module of $T$;
\item $N$ is a strong module of $T[M]$.
\end{enumerate}
\end{prop}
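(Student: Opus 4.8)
The plan is to prove Proposition~\ref{prop_strong_tournaments} by a double inclusion argument, using the characterization of strong modules as those modules that do not ``overlap'' any other module, together with the hereditary behaviour of modules recorded in Proposition~\ref{properties_modules_tournaments} (in particular Assertions (A2)--(A5) for the weak modular covering $W\mapsto\mathscr{M}(T[W])$). Throughout, fix a tournament $T$ and a strong module $M$ of $T$, and let $N\subseteq M$.

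First I would prove the implication (1)$\Rightarrow$(2). Suppose $N$ is a strong module of $T$. Then $N\in\mathscr{M}(T)$, and since $N\subseteq M$, Assertion (A3) of the weak modular covering (with $W=M$, $W'=V(T)$, using $M\in\mathscr{M}(T)$) gives $N\in\mathscr{M}(T[M])$, so $N$ is a module of $T[M]$. To see it is strong in $T[M]$, let $P$ be any module of $T[M]$ with $P\cap N\neq\emptyset$; I must show $P\subseteq N$ or $N\subseteq P$. The point is to lift $P$ to a module of $T$ containing it: since $P\in\mathscr{M}(T[M])$, $P\cap M=P\neq\emptyset$, and $M\in\mathscr{M}(T)$, Assertion (A5) (with $W=M$, $W'=V(T)$, $M'=M$) yields $P\cup M=M\in\mathscr{M}(T\ldots)$ — that is circular, so instead I apply (A5) with the roles arranged so that $P$ itself becomes a module of $T$: more carefully, since $P$ is a module of $T[M]$ and $M$ is a module of $T$, a standard transitivity fact (a module of a module, when the inner set is a module of the whole, is again a module of the whole) gives $P\in\mathscr{M}(T)$. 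This fact should be derivable from (A5): applying (A5) with $W=M$, $M'=M$ gives nothing new, so the cleanest route is to invoke the weakly-partitive/hereditary structure directly, or simply to note that $P\in\mathscr{M}(T[M])$ and $M\in\mathscr{M}(T)$ imply $P\in\mathscr{M}(T[M\cup M])=\mathscr{M}(T)$ via (A5) after first enlarging: take $M'=V(T)$ is not allowed since we need $M'\in\mathscr{M}(W')$ — here $V(T)\in\mathscr{M}(T)$ is trivial, so $W'=V(T)$, $M'=V(T)$, $W=M$, $M\cap V(T)=M\neq\emptyset$, and (A5) gives $P\cup V(T)=V(T)$, again trivial. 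The honest statement is: transitivity of modules is a basic property and I will cite it as part of Proposition~\ref{properties_modules_tournaments}'s consequences. Granting $P\in\mathscr{M}(T)$, strongness of $N$ in $T$ and $P\cap N\neq\emptyset$ force $P\subseteq N$ or $N\subseteq P$, as required.

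Next, the implication (2)$\Rightarrow$(1). Suppose $N$ is a strong module of $T[M]$; in particular $N$ is a module of $T[M]$, and by the transitivity fact above (using $M\in\mathscr{M}(T)$) $N$ is a module of $T$. It remains to show $N$ is strong in $T$: let $K\in\mathscr{M}(T)$ with $K\cap N\neq\emptyset$, and show $K\subseteq N$ or $N\subseteq K$. Since $N\subseteq M$ we have $K\cap M\supseteq K\cap N\neq\emptyset$. Because $M$ is a \emph{strong} module of $T$ and $K\cap M\neq\emptyset$, we get $M\subseteq K$ or $K\subseteq M$. In the first case $N\subseteq M\subseteq K$ and we are done. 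In the second case $K\subseteq M$, so by Assertion (A3) (with $W=M$, $W'=V(T)$) $K\in\mathscr{M}(T[M])$; now strongness of $N$ in $T[M]$ together with $K\cap N\neq\emptyset$ gives $K\subseteq N$ or $N\subseteq K$. Either way the required dichotomy holds, so $N$ is a strong module of $T$.

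The main obstacle is the repeatedly-used transitivity lemma for modules — ``if $M\in\mathscr{M}(T)$ and $P\in\mathscr{M}(T[M])$ then $P\in\mathscr{M}(T)$'' — which the excerpt does not isolate as a named statement; I would either derive it in one line directly from the definition of a module (checking the module condition for an edge $e$ of $T$ meeting both $P$ and $V(T)\setminus P$, splitting on whether $e\subseteq M$) or record it explicitly as a consequence of the weak modular covering axioms before the proof. Everything else is a routine unwinding of the definition of ``strong'' via the overlap dichotomy, and the two inclusions are essentially symmetric once that lemma and Assertion (A3) are in hand.
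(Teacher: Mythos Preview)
The paper does not give its own proof of Proposition~\ref{prop_strong_tournaments} (it is attributed to~\cite{EHR99}), but it proves the identical hypergraph analogue, Proposition~\ref{prop_strong_hypergraphs}, and since that proof uses only Assertions~(A2) and~(A3)---which are required of a weak modular covering as well---the same argument applies verbatim to tournaments. Your two-direction outline matches that proof.

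The one place you lose your footing is the ``transitivity lemma'' (a module of $T[M]$ is a module of $T$ when $M$ is a module of $T$): you spend a paragraph trying to extract it from~(A5) before declaring it folklore. In fact it is precisely the $\supseteq$ half of Assertion~(A3). With $W=M$ and $W'=V(T)$, the hypothesis $M\in\mathscr{M}(T)$ yields the \emph{equality}
\[
\{M'\in\mathscr{M}(T):M'\subseteq M\}=\mathscr{M}(T[M]),
\]
so every $P\in\mathscr{M}(T[M])$ lies in $\mathscr{M}(T)$ automatically. You already invoke~(A3) correctly elsewhere; once you recognize that it also supplies this step (and, as in the paper, use~(A2) rather than~(A3) for the easier restriction $N\in\mathscr{M}(T)\Rightarrow N\in\mathscr{M}(T[M])$), the detour through~(A5) disappears and your argument becomes the paper's four-line proof.
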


We use the analogue of Proposition~\ref{prop_strong_tournaments} for hypergraphs (see Proposition~\ref{prop_strong_hypergraphs}) to prove Proposition~\ref{P1_real_dec}.

\subsection{Critical tournaments}

\begin{defi}\label{nota_critical}
Given a prime tournament $T$, a vertex $v$ of $T$ is {\em critical} if $T-v$ is decomposable. 
A prime tournament is {\em critical} if all its vertices are critical. 
\end{defi}

Schmerl and Trotter \cite{ST93} characterized the
critical tournaments. 
They obtained the tournaments $T_{2n+1}$, $U_{2n+1}$ and $W_{2n+1}$
defined on $\{0,\ldots, 2n\}$, where $n\geq 1$, as follows. 
\begin{itemize}
\item The tournament $T_{2n+1}$ is obtained from $L_{2n+1}$ by reversing all the arcs between even and odd vertices (see Figure~\ref{T2n+1}). 
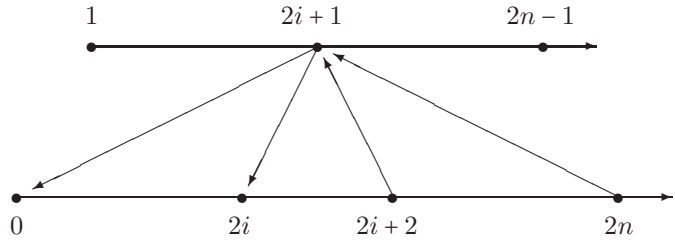
\begin{figure}[h]
\begin{center}
\setlength{\unitlength}{1cm}
\begin{picture}(10,3.5)

\put(1,0.5){$\bullet$}
\put(1,0.1){0}
\put(1.1,0.6){\vector(1,0){8.7}}

\put(4,0.5){$\bullet$}
\put(3.9,0.1){$2i$}

\put(6,0.5){$\bullet$}
\put(5.6,0.1){$2i+2$}
\put(6.1,0.6){\vector(-1,2){0.93}}

\put(9,0.5){$\bullet$}
\put(8.9,0.1){$2n$}
\put(9.1,0.6){\vector(-2,1){3.8}}

\put(2,2.5){$\bullet$}
\put(2,2.9){1}
\put(2.1,2.6){\vector(1,0){6.7}}

\put(5,2.5){$\bullet$}
\put(4.6,2.9){$2i+1$}
\put(5.1,2.6){\vector(-2,-1){3.8}}
\put(5.1,2.6){\vector(-1,-2){0.93}}

\put(8,2.5){$\bullet$}
\put(7.6,2.9){$2n-1$}

\end{picture}
\end{center}
\caption{The tournament $T_{2n+1}$.}
\label{T2n+1}
\end{figure} 
\item The tournament $U_{2n+1}$ is obtained from $L_{2n+1}$ by reversing all the arcs between even vertices (see Figure~\ref{U2n+1}). 
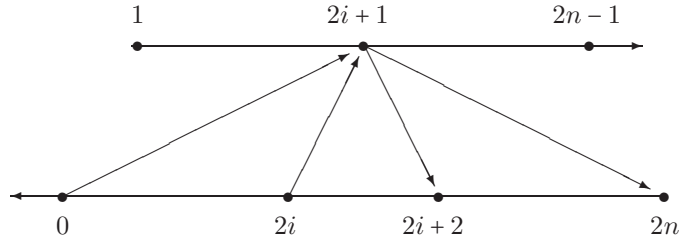
\begin{figure}[h]
\begin{center}
\setlength{\unitlength}{1cm}
\begin{picture}(10,3.5)

\put(1,0.5){$\bullet$}
\put(1,0.1){0}
\put(1.1,0.6){\vector(2,1){3.8}}

\put(4,0.5){$\bullet$}
\put(3.9,0.1){$2i$}
\put(4.1,0.6){\vector(1,2){0.93}}

\put(6,0.5){$\bullet$}
\put(5.6,0.1){$2i+2$}

\put(9,0.5){$\bullet$}
\put(8.9,0.1){$2n$}
\put(9.1,0.6){\vector(-1,0){8.7}}

\put(2,2.5){$\bullet$}
\put(2,2.9){1}
\put(2,2.6){\vector(1,0){6.8}}

\put(5,2.5){$\bullet$}
\put(4.6,2.9){$2i+1$}
\put(5.1,2.6){\vector(1,-2){0.93}}
\put(5.1,2.6){\vector(2,-1){3.8}}

\put(8,2.5){$\bullet$}
\put(7.6,2.9){$2n-1$}

\end{picture}
\end{center}
\caption{The tournament $U_{2n+1}$.}
\label{U2n+1}
\end{figure}
\item The tournament $W_{2n+1}$ is obtained from $L_{2n+1}$ by reversing all the arcs between
$2n$ and the even elements of $\{0,\ldots,2n-1\}$ (see Figure~\ref{W2n+1}). 
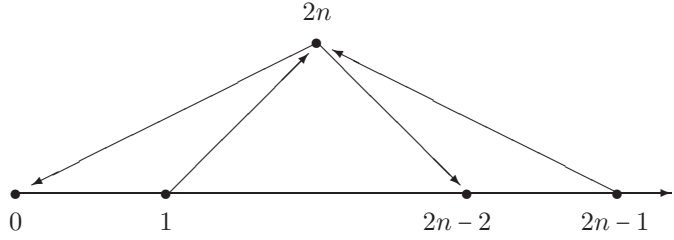
\begin{figure}[h]
\begin{center}
\setlength{\unitlength}{1cm}
\begin{picture}(10,3.5)

\put(1,0.5){$\bullet$}
\put(1,0.1){0}
\put(1.1,0.6){\vector(1,0){8.7}}

\put(3,0.5){$\bullet$}
\put(3,0.1){1}
\put(3.13,0.6){\vector(1,1){1.85}}

\put(7,0.5){$\bullet$}
\put(6.5,0.1){$2n-2$}

\put(9,0.5){$\bullet$}
\put(8.6,0.1){$2n-1$}
\put(9.1,0.6){\vector(-2,1){3.8}}

\put(5,2.5){$\bullet$}
\put(4.9,2.9){$2n$}
\put(5.1,2.6){\vector(-2,-1){3.8}}
\put(5.1,2.6){\vector(1,-1){1.9}}

\end{picture}
\end{center}
\caption{The tournament $W_{2n+1}$.}
\label{W2n+1}
\end{figure}
\end{itemize}

\begin{thm}[Schmerl and Trotter \cite{ST93}]\label{th1 sch trott}
Given a tournament $\tau$, with $v(\tau)\geq 5$, 
$\tau$ is critical if and only if $v(\tau)$ is odd, and $\tau$ is isomorphic to 
$T_{v(\tau)}$, $U_{v(\tau)}$ or $W_{v(\tau)}$. 
\end{thm}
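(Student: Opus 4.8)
This theorem is quoted from \cite{ST93} and is not reproved here; the argument I would give runs as follows. \emph{Easy direction.} I would first check directly that each of $T_{2n+1}$, $U_{2n+1}$, $W_{2n+1}$ is critical. For indecomposability I would use the ``closure under splitting vertices'' criterion: in a tournament $T$, a set $M$ with $|M|\ge 2$ is a module exactly when every vertex outside $M$ dominates all of $M$ or is dominated by all of $M$; hence if, starting from an arbitrary pair of vertices, one can reach all of $V(T)$ by repeatedly adjoining a vertex that dominates one of two already chosen vertices while being dominated by the other, then $T$ is indecomposable. As each family is obtained from $L_{2n+1}$ by reversing a parity-defined set of arcs, this is a short parity computation. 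Then, for every vertex $v$, I would exhibit a nontrivial module of the tournament minus $v$ --- for instance, in $T_{2n+1}$, deleting an even vertex turns two adjacent same-parity vertices into a module --- so that all three families are critical. In particular, critical tournaments exist in every odd order at least $5$.

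\emph{Hard direction.} I would prove by induction on $m=v(\tau)$ that a critical tournament $\tau$ with $m\ge5$ has $m$ odd and is isomorphic to $T_m$, $U_m$ or $W_m$. The base case $m=5$ is a finite verification: list the indecomposable $5$-vertex tournaments and check that the critical ones are precisely $T_5$, $U_5$, $W_5$. For the step I would invoke the classical extension theorem for indecomposable tournaments (see~\cite{ER90b}): an indecomposable tournament on $m\ge5$ vertices has an indecomposable subtournament on $m-1$ or $m-2$ vertices. Criticality forbids the case $m-1$, so there are distinct vertices $x,y$ with $\tau':=\tau-\{x,y\}$ indecomposable on $m-2$ vertices.

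To apply the induction hypothesis I must be able to take $\tau'$ critical as well. I would arrange this by strengthening the statement to \emph{partially critical} tournaments --- a tournament $T$ together with an indecomposable subtournament $T[X]$, $|X|\ge3$, such that $T-v$ is decomposable for all $v\notin X$ --- and using the quotient/``outside structure'' analysis of indecomposable tournaments of Ehrenfeucht, Harju and Rozenberg (see~\cite{EHR99}) to descend two vertices at a time while remaining partially critical, down to an indecomposable part of size $3$ (a $C_3$). Passing from $m$ to $3$ by removing an even number of vertices forces $m$ to be odd; this also uses that no $4$-vertex tournament is indecomposable. Hence $\tau'\cong T_{m-2}$, $U_{m-2}$ or $W_{m-2}$. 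It then remains to determine how $x$ and $y$ attach to the rigid linear skeleton underlying $\tau'$: comparing a new vertex with that skeleton shows it can realize only boundedly many dominance patterns (indexed by a position and a parity), and demanding simultaneously that $\tau$ be indecomposable and that every $\tau-v$ be decomposable narrows the possibilities down to exactly the patterns that rebuild $T_m$, $U_m$ or $W_m$.

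\emph{Main obstacle.} The heart of the matter is this final extension analysis: classifying all ways of enlarging a large, highly rigid tournament by two vertices subject at once to one global indecomposability condition and a whole family of local decomposability conditions. Carrying it through cleanly essentially requires the quotient/outside-structure machinery of \cite{EHR99} --- organising the vertices outside an indecomposable part into classes by their behaviour on it --- together with careful module bookkeeping; the bare induction on $m$ drowns in cases. A secondary technical burden is the justification that the induction runs at all, i.e.\ the reduction from ``critical'' to ``critical (or partially critical) with two fewer vertices'', which is why I would set everything up for partially critical tournaments from the outset.
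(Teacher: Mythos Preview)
The paper does not prove this theorem; it is quoted from Schmerl and Trotter~\cite{ST93} without proof, exactly as you note in your opening sentence. There is therefore nothing in the paper to compare your sketch against, and your treatment---citing the result and offering an informal roadmap---matches the paper's own handling.
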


\subsection{The $C_3$-structure of a tournament}

The $C_3$-structure of a tournament (see Definition~\ref{C_3}) is clearly a 3-uniform hypergraph. 
The main theorem of \cite{BILT04} follows. 
It plays an important role in Section~\ref{real}. 

\begin{thm}[Boussa\"{\i}ri et al.~\cite{BILT04}]\label{thm_bilt}
Let $T$ be a prime tournament. 
For every tournament $T'$, if $C_3(T')=C_3(T)$, then $T'=T$ or $T^\star$. 
\end{thm}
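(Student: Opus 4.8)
The plan is to study the pairs on which two realizations of the same $C_3$-structure disagree. Assume $C_3(T')=C_3(T)$ and let $D$ be the set of $2$-element subsets $\{x,y\}$ of $V(T)$ such that the arc joining $x$ and $y$ is oriented one way in $T$ and the other way in $T'$. A $3$-element subset induces a $3$-cycle in a tournament if and only if it does so in its dual, so $C_3(T^\star)=C_3(T)$ and $C_3(T'^\star)=C_3(T')$; replacing $(T,T')$ by $(T^\star,T'^\star)$ changes neither the hypothesis nor the conclusion, so it suffices to prove: if $T$ is prime then $D=\emptyset$ (equivalently $T'=T$) or $D=\binom{V(T)}{2}$ (equivalently $T'=T^\star$). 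The basic tool is a case check over the $2^3$ ways of reversing arcs inside a fixed $3$-element set $\{a,b,c\}$: reversing a set $F$ of pairs of $\{a,b,c\}$ preserves the type of $T[\{a,b,c\}]$ (being a $3$-cycle, or being transitive) if and only if $F=\emptyset$, or $F$ is the set of all three pairs, or $T[\{a,b,c\}]$ is transitive with source $s$ and sink $t$ and $F$ is either one pair different from $\{s,t\}$ or two pairs one of which is $\{s,t\}$. I would record this once and use it throughout.

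I would then induct on $n=v(T)$. For $n=3$ a prime tournament is a $3$-cycle, and a fixed $3$-element vertex set carries exactly two $3$-cycles, $T$ and $T^\star$; for $n=4$ there is no prime tournament. Suppose $n\ge 5$. If $T$ is not critical it has a vertex $v$ with $T-v$ prime; since no prime tournament has exactly four vertices, $v(T-v)\ge 5$ (in particular $n\ne 5$ here), so the induction hypothesis gives, after possibly replacing $T'$ by $T'^\star$, that $T'-v=T-v$. Then every pair of $D$ contains $v$; set $R=\{w\in V(T)\setminus\{v\}:\{v,w\}\in D\}$. The case check applied to the triples $\{v,w_1,w_2\}$ with $w_1,w_2\in R$ (for which $F=\{\{v,w_1\},\{v,w_2\}\}$, a two-pair set not containing $\{w_1,w_2\}$) forces $v$ to be the source or the sink of each such triple, hence $R\subseteq N^{+}(v)$ or $R\subseteq N^{-}(v)$; up to duality say $R\subseteq N^{+}(v)$. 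Applying it to the triples $\{v,w_1,w_2\}$ with $w_1\in R$, $w_2\notin R$ (so $F=\{\{v,w_1\}\}$) forces $w_1\to w_2$ when $w_2\in N^{+}(v)\setminus R$ and $w_2\to w_1$ when $w_2\in N^{-}(v)$. Thus $\{v\}\cup R$ is a module of $T$: each $w\notin\{v\}\cup R$ is beaten by everything in $\{v\}\cup R$ if $w\in N^{+}(v)\setminus R$, and beats everything in $\{v\}\cup R$ if $w\in N^{-}(v)$. Since $T$ is prime this module is trivial; it is not $V(T)$, for that would make $v$ a source of $T$, whence $V(T)\setminus\{v\}$ would be a non-trivial module; hence $R=\emptyset$ and $T'=T$.

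The critical tournaments remain. By Theorem~\ref{th1 sch trott} they are, up to isomorphism, $T_{2n+1}$, $U_{2n+1}$, $W_{2n+1}$ with $n\ge 2$, and I would handle each family directly from its structure. For $T_{2n+1}$, the $3$-cycles are exactly the triples $a<b<c$ with $a\equiv c\not\equiv b\pmod 2$; hence a realization $T'$ of $C_3(T_{2n+1})$ is transitive on the even vertices and on the odd vertices, and one then pins down these two linear orders and checks that the even--odd arcs are forced up to a single global reversal, so $T'\in\{T_{2n+1},T_{2n+1}^{\star}\}$; $U_{2n+1}$ and $W_{2n+1}$ are similar. Alternatively one can push the induction through a two-vertex deletion: removing $\{2n-1,2n\}$ from $T_{2n+1}$ (resp.\ $U_{2n+1}$) yields $T_{2n-1}$ (resp.\ $U_{2n-1}$), and removing $\{0,1\}$ from $W_{2n+1}$ yields $W_{2n-1}$, which for $n\ge 3$ is prime and for $n=2$ is a $3$-cycle; one then reruns the module-extraction argument with two distinguished vertices. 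I expect this last point---the critical families, equivalently the two-vertex bookkeeping---to be the main obstacle. The one-vertex argument is clean because all reversed pairs share the vertex $v$; with two distinguished vertices $u,v$ one must additionally track the status of the pair $\{u,v\}$ and run the $3$-element case check on triples meeting $\{u,v\}$ in one or in two points for each combination of $\{u,v\}\in D$ or not, $w\in R_u$ or not, $w\in R_v$ or not; the bookkeeping is considerably longer and one still has to produce a non-trivial module of $T$ (or conclude $D=\binom{V(T)}{2}$) in every sub-case.
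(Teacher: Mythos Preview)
The paper does not contain a proof of this theorem. Theorem~\ref{thm_bilt} is stated as a citation of the main result of~\cite{BILT04} (``The main theorem of \cite{BILT04} follows. It plays an important role in Section~\ref{real}.'') and is used as a black box in the proof of Theorem~\ref{realiza_non_critical}. There is therefore nothing in the present paper to compare your argument against.

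On its own merits, your plan is sound and in the spirit of the original source. The one-vertex inductive step in the non-critical case is correct: the local case check on triples is accurate, the conclusion that $R\subseteq N_T^+(v)$ or $R\subseteq N_T^-(v)$ is valid (it is vacuous when $|R|\le 1$, but then $\{v\}\cup R$ is still a module by the triple analysis with $w_1\in R$ and $w_2\notin R$), and the resulting module $\{v\}\cup R$ contradicts primality unless $R=\emptyset$. Your observation that every prime tournament on five vertices is critical (since removing any vertex leaves a $4$-vertex tournament, which is never prime) correctly disposes of the apparent gap at $n=5$.

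What remains genuinely open in your proposal is the critical case, as you yourself acknowledge. A direct verification for each of $T_{2n+1}$, $U_{2n+1}$, $W_{2n+1}$ is feasible but requires care; the alternative two-vertex deletion scheme does reduce to a smaller prime (or to $C_3$ when $n=2$), but the bookkeeping over which of the pairs $\{u,w\}$, $\{v,w\}$, $\{u,v\}$ lie in $D$ is substantially longer than the one-vertex case, and you have not carried it out. Until that case analysis (by either route) is written down, the argument is a plausible outline rather than a proof.
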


\section{Modular decomposition of hypergraphs}

Definition~\ref{defi_module_hyper} is not the usual definition of a module of a hypergraph. 
The usual definition follows. 

\begin{defi}\label{defi_module_usual_hyper}
Let $H$ be a hypergraph. 
A subset $M$ of $V(H)$ is a {\em module} \cite[Definition~2.4]{BDV99} of $H$ if for any 
$e,f\subseteq V(H)$ such that $|e|=|f|$, 
$e\setminus M=f\setminus M$, and $e\setminus M\neq\emptyset$, we have 
$e\in E(H)$ if and only if $f\in E(H)$.  
\end{defi}

\begin{rem}\label{R_not_share}
Given a hypergraph $H$, a module of $H$ in the sense of Definition~\ref{defi_module_hyper} is a module in the sense of 
Definition~\ref{defi_module_usual_hyper}. 
The converse is not true. 
Given $n\geq 3$, consider the 3-uniform hypergraph $H$ defined by 
$V(H)=\{0,\ldots,n-1\}$ and $E(H)=\{01p:2\leq p\leq n-1\}$. 
In the sense of Definition~\ref{defi_module_usual_hyper}, $\{0,1\}$ is a module of $H$ whereas it is not a module of $H$ in the sense of Definition~\ref{defi_module_hyper}. 
\end{rem}

Let $H$ be a realizable 3-uniform hypergraph. 
Consider a realization $T$ of $H$. 
Given $e\in E(H)$, all the modules of $T[e]$ are trivial. 
To handle close modular decompositions for $H$ and $T$, we try to find a definition of a module of $H$ for which all the modules of $H[e]$ are trivial as well. 
This is the case with Definition~\ref{defi_module_hyper}, and not with Definition~\ref{defi_module_usual_hyper}. 
Moreover, note that, with Definition~\ref{defi_module_usual_hyper}, $H$ and $T$ do not share the same strong modules, but but they do with Definition~\ref{defi_module_hyper} (see Theorem~\ref{same_strong_modules}). 
Indeed, consider the 3-uniform hypergraph $H$ defined on $\{0,\ldots,n-1\}$ in Remark~\ref{R_not_share}. 
In the sense of Definition~\ref{defi_module_usual_hyper}, $\{0,1\}$ is a strong module of $H$. 
Now, consider the tournament $T$ obtained from $L_n$ by reversing all the arcs between $0$ and $p\in\{2,\ldots,n-1\}$. 
Clearly, $T$ realizes $H$. 
Since $T[\{0,1,2\}]$ is a 3-cycle, $\{0,1\}$ is not a module of $T$, so it is not a strong module.

\subsection{Modular covering}\label{subs Modular covering}

The purpose of the subsection is to establish Proposition~\ref{properties_modules}. 
To begin, we show that the set of the modules of a hypergraph is a partitive family (see Proposition~\ref{prop_Delta}). 
We need the next three lemmas.

\begin{lem}\label{lem_intersection}
Let $H$ be a hypergraph. 
For any $M,N\in\mathscr{M}(H)$, we have 
$M\cap N\in\mathscr{M}(H)$. 
\end{lem}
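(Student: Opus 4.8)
The plan is to verify the defining conditions of a module directly for $M\cap N$. Let $e\in E(H)$ with $e\cap(M\cap N)\neq\emptyset$ and $e\setminus(M\cap N)\neq\emptyset$; I must produce the vertex $m\in M\cap N$ promised by Definition~\ref{defi_module_hyper}. The key observation is that, because $e$ meets $M\cap N$, the edge $e$ also meets $M$; I would split into cases according to whether $e$ is contained in $M$ or not, and likewise for $N$.

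First I would treat the case $e\setminus M\neq\emptyset$. Then, since $e\cap M\neq\emptyset$ as well, applying the module condition for $M$ gives a vertex $m_M\in M$ with $e\cap M=\{m_M\}$ and $(e\setminus\{m_M\})\cup\{n\}\in E(H)$ for all $n\in M$. In particular $e\cap(M\cap N)\subseteq e\cap M=\{m_M\}$, and since this intersection is nonempty we get $e\cap(M\cap N)=\{m_M\}$ with $m_M\in N$. It remains to check $(e\setminus\{m_M\})\cup\{n\}\in E(H)$ for every $n\in M\cap N$; but this is immediate from the conclusion of the $M$-condition since $M\cap N\subseteq M$. So in this case $m_M$ is the required vertex.

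The remaining case is $e\setminus M=\emptyset$, i.e. $e\subseteq M$. Then $e\setminus N\neq\emptyset$ (otherwise $e\subseteq M\cap N$, contradicting $e\setminus(M\cap N)\neq\emptyset$) and $e\cap N\supseteq e\cap(M\cap N)\neq\emptyset$, so the module condition for $N$ applies and yields $m_N\in N$ with $e\cap N=\{m_N\}$; since $e\subseteq M$ we have $m_N\in M$, hence $e\cap(M\cap N)=\{m_N\}$. For $n\in M\cap N$, the $N$-condition gives $(e\setminus\{m_N\})\cup\{n\}\in E(H)$, as wanted. This exhausts the cases and shows $M\cap N\in\mathscr{M}(H)$.

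I do not expect a serious obstacle here; the only mild subtlety is keeping the bookkeeping straight in the second case, namely noticing that $e\subseteq M$ forces $e\setminus N\neq\emptyset$ and that the witness vertex produced by the $N$-condition automatically lies in $M$, so that it actually witnesses the condition for $M\cap N$. Everything else is a routine unwinding of the definition.
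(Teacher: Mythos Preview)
Your proof is correct and follows essentially the same approach as the paper's. The only cosmetic difference is that the paper observes that $e\setminus(M\cap N)\neq\emptyset$ forces $e\setminus M\neq\emptyset$ or $e\setminus N\neq\emptyset$ and handles just one of these by symmetry, whereas you phrase the dichotomy as $e\setminus M\neq\emptyset$ versus $e\subseteq M$ and spell out the second case explicitly; the underlying argument is identical.
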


\begin{proof}
Consider $M,N\in\mathscr{M}(H)$. 
To show that $M\cap N\in\mathscr{M}(H)$, consider 
$e\in E(H)$ such that $e\cap(M\cap N)\neq\emptyset$ and 
$e\setminus(M\cap N)\neq\emptyset$. 
Since $e\setminus(M\cap N)\neq\emptyset$, assume for instance that 
$e\setminus M\neq\emptyset$. 
Since $M$ is a module of $H$ and $e\cap M\neq\emptyset$, there exists $m\in M$ such that $e\cap M=\{m\}$. 
Since $e\cap(M\cap N)\neq\emptyset$, we obtain $$e\cap(M\cap N)=\{m\}.$$
Let $n\in M\cap N$. 
Since $M$ is a module of $H$, $(e\setminus\{m\})\cup\{n\}\in E(H)$. 
\end{proof}

\begin{lem}\label{lem_union}
Let $H$ be a hypergraph. 
For any $M,N\in\mathscr{M}(H)$, if $M\cap N\neq\emptyset$, then 
$M\cup N\in\mathscr{M}(H)$. 
\end{lem}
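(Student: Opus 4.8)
The plan is to prove that for $M,N\in\mathscr{M}(H)$ with $M\cap N\neq\emptyset$, the union $M\cup N$ is a module of $H$. Following Definition~\ref{defi_module_hyper}, I would fix an edge $e\in E(H)$ with $e\cap(M\cup N)\neq\emptyset$ and $e\setminus(M\cup N)\neq\emptyset$, and produce a vertex $m\in M\cup N$ with $e\cap(M\cup N)=\{m\}$ such that $(e\setminus\{m\})\cup\{n\}\in E(H)$ for all $n\in M\cup N$.

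First I would note $e\setminus(M\cup N)=(e\setminus M)\setminus N\subseteq e\setminus M$, so $e\setminus M\neq\emptyset$, and symmetrically $e\setminus N\neq\emptyset$. The case analysis splits on whether $e$ meets $M$ and whether $e$ meets $N$. If $e\cap M\neq\emptyset$: since $M$ is a module and $e\setminus M\neq\emptyset$, there is $m\in M$ with $e\cap M=\{m\}$ and $(e\setminus\{m\})\cup\{n'\}\in E(H)$ for all $n'\in M$. I would then argue $e\cap N=\{m\}$ as well when $e\cap N\neq\emptyset$: indeed $N$ is a module with $e\setminus N\neq\emptyset$, so $e\cap N$ is a singleton $\{n\}$; but if $e\cap M\neq\emptyset$ and $e\cap N\neq\emptyset$, picking any $x\in M\cap N$ and applying the $M$-module property gives edges supported on $M$-translates, and I need $n=m$. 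The cleanest way: since $e\cap M=\{m\}$, if $n\in e\cap N$ with $n\neq m$ then $n\in e\setminus M$, but then applying the $M$-module property to replace $m$ by $x\in M\cap N$ yields an edge $f=(e\setminus\{m\})\cup\{x\}$ containing both $x$ and $n$ in $N$ with $f\setminus N\neq\emptyset$ (as $f\setminus N\supseteq e\setminus(M\cup N)\neq\emptyset$), contradicting that $N$ is a module (which forces $f\cap N$ to be a singleton). Hence $e\cap N\subseteq\{m\}$, so $e\cap(M\cup N)=\{m\}$. Then for any $n'\in M\cup N$ I must show $(e\setminus\{m\})\cup\{n'\}\in E(H)$: if $n'\in M$ this is immediate from the $M$-module property; if $n'\in N$, first move $m$ to some $x\in M\cap N$ getting $f=(e\setminus\{m\})\cup\{x\}\in E(H)$, then since $f\cap N=\{x\}$ and $f\setminus N\neq\emptyset$, apply the $N$-module property to replace $x$ by $n'$, yielding $(e\setminus\{m\})\cup\{n'\}\in E(H)$. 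The remaining case, $e\cap M=\emptyset$, forces $e\cap N\neq\emptyset$ (since $e$ meets $M\cup N$), and then the whole argument runs with the roles of $M$ and $N$ swapped, with $m$ replaced by the unique element of $e\cap N$.

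The main obstacle is the bookkeeping that $e\cap(M\cup N)$ is a singleton even when $e$ meets both $M$ and $N$ — i.e., showing the two singletons $e\cap M$ and $e\cap N$ coincide. This is exactly where the nonemptiness hypothesis $M\cap N\neq\emptyset$ is used, via a fixed common vertex $x\in M\cap N$ that lets one "transport" edges between the $M$-side and the $N$-side; I would make sure that each transport step preserves the hypothesis $f\setminus M\neq\emptyset$ or $f\setminus N\neq\emptyset$ needed to reapply the module definition, which holds because $e\setminus(M\cup N)\subseteq f$ throughout. Everything else is routine substitution in edges.
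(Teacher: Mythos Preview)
Your proposal is correct and follows essentially the same approach as the paper: reduce by symmetry to the case $e\cap M\neq\emptyset$, use the $M$-module property to get $e\cap M=\{m\}$, transport through a fixed $x\in M\cap N$ to form $f=(e\setminus\{m\})\cup\{x\}$, and then apply the $N$-module property to $f$ both to force $e\cap(M\cup N)=\{m\}$ and to reach every $n'\in N$. The only cosmetic difference is that you verify $e\cap N\subseteq\{m\}$ by contradiction (a second element of $e\cap N$ would survive into $f$ and make $|f\cap N|\geq 2$), whereas the paper computes $e\cap N=\{m\}\cap N$ directly from $f\cap N=\{x\}$.
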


\begin{proof}
Consider $M,N\in\mathscr{M}(H)$ such that $M\cap N\neq\emptyset$. 
To show that $M\cup N\in\mathscr{M}(H)$, 
consider 
$e\in E(H)$ such that $e\cap(M\cup N)\neq\emptyset$ and 
$e\setminus(M\cup N)\neq\emptyset$. 
Since $e\cap(M\cup N)\neq\emptyset$, assume for instance that 
$e\cap M\neq\emptyset$. 
Clearly $e\setminus M\neq\emptyset$ because $e\setminus(M\cup N)\neq\emptyset$. 
Since $M$ is a module of $H$, there exists $m\in M$ such that $e\cap M=\{m\}$, and 
\begin{equation}\label{E1_lem_union}
\text{$(e\setminus\{m\})\cup\{n\}\in E(H)$ for every $n\in M$.}
\end{equation} 
Consider $n\in M\cap N$. 
By \eqref{E1_lem_union}, $(e\setminus\{m\})\cup\{n\}\in E(H)$. 
Set $$f=(e\setminus\{m\})\cup\{n\}.$$
Clearly $n\in f\cap N$. 
Furthermore, consider $p\in e\setminus(M\cup N)$. 
Since $m\in M$, we have $p\neq m$, and hence $p\in f\setminus N$. 
Since $N$ is a module of $H$, we obtain $f\cap N=\{n\}$ and 
\begin{equation}\label{E2_lem_union}
\text{$(f\setminus\{n\})\cup\{n'\}\in E(H)$ for every $n'\in N$.}
\end{equation}
Since $(f\setminus\{n\})\cup\{n'\}=(e\setminus\{m\})\cup\{n'\}$ for every $n'\in N$, it follows from \eqref{E2_lem_union} that 
\begin{equation}\label{E3_lem_union}
\text{$(e\setminus\{m\})\cup\{n'\}\in E(H)$ for every $n'\in N$.}
\end{equation}
Therefore, it follows from \eqref{E1_lem_union} and \eqref{E3_lem_union} that 
\begin{equation*}
\text{$(e\setminus\{m\})\cup\{n\}\in E(H)$ for every $n\in M\cup N$.}
\end{equation*}
Moreover, since $f\cap N=\{n\}$, we have 
\begin{align*}
e\cap N=&\ (\{m\}\cup(e\setminus\{m\}))\cap N\\
=&\ (\{m\}\cup(f\setminus\{n\}))\cap N\\
=&\ \{m\}\cap N,
\end{align*} 
and hence $e\cap N\subseteq\{m\}$. 
Since $e\cap M=\{m\}$, we obtain $e\cap(M\cup N)=\{m\}$. 
Consequently, $M\cup N$ is a module of $H$. 
\end{proof}

\begin{lem}\label{lem_moins}
Let $H$ be a hypergraph. 
For any $M,N\in\mathscr{M}(H)$, if $M\setminus N\neq\emptyset$, then 
$N\setminus M\in\mathscr{M}(H)$. 
\end{lem}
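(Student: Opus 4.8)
The plan is to argue directly from Definition~\ref{defi_module_hyper}, mimicking the structure of the proof of Lemma~\ref{lem_union}. So let $M,N\in\mathscr{M}(H)$ with $M\setminus N\neq\emptyset$, and consider $e\in E(H)$ such that $e\cap(N\setminus M)\neq\emptyset$ and $e\setminus(N\setminus M)\neq\emptyset$. I must produce the required witness $m\in N\setminus M$. The key observation is that $e\cap(N\setminus M)\neq\emptyset$ forces $e\cap N\neq\emptyset$, so I can try to apply the module property of $N$ to $e$; but first I need $e\setminus N\neq\emptyset$. There are two cases: either $e\setminus N\neq\emptyset$ outright, or $e\subseteq N$.

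In the first case, since $N$ is a module, there is $n\in N$ with $e\cap N=\{n\}$ and $(e\setminus\{n\})\cup\{n'\}\in E(H)$ for all $n'\in N$. Because $e\cap(N\setminus M)\neq\emptyset$ and $e\cap N=\{n\}$, this $n$ must lie in $N\setminus M$; set $m=n$. For $n'\in N\setminus M$ I get $(e\setminus\{m\})\cup\{n'\}\in E(H)$ directly, and I must also check $e\cap(N\setminus M)=\{m\}$, which is immediate since $e\cap N=\{m\}$ already. That case is essentially free.

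The genuinely delicate case is $e\subseteq N$. Then $e\setminus M$ and $e\cap M$: since $e\setminus(N\setminus M)\neq\emptyset$ and $e\subseteq N$, we have $e\cap M\neq\emptyset$. If also $e\setminus M\neq\emptyset$, then $M$ is a module so there is $m\in M$ with $e\cap M=\{m\}$ and $(e\setminus\{m\})\cup\{m'\}\in E(H)$ for all $m'\in M$; pick any $x\in M\setminus N$ (nonempty by hypothesis) and form $f=(e\setminus\{m\})\cup\{x\}\in E(H)$. Now $f$ meets $N$ (since $e\setminus\{m\}\subseteq e\subseteq N$ has an element, using $|e|\geq 2$) and $f\setminus N\ni x\neq\emptyset$, so applying $N$ as a module to $f$ yields some $m^*\in N\setminus M$ with $f\cap N=\{m^*\}$ and translates of $f$ back into all of $N$; translating back along $x\mapsto m^*\mapsto n'$ should produce $(e\setminus\{m\})\cup\{n'\}\in E(H)$ for every $n'\in N$, hence a fortiori for $n'\in N\setminus M$, and one reads off the singleton condition for $N\setminus M$ from $f\cap N=\{m^*\}$ together with $e\cap M=\{m\}$. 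The remaining subcase $e\subseteq M\cap N$ cannot occur, because then $e\cap(N\setminus M)=\emptyset$, contradicting our assumption. I expect the main obstacle to be the bookkeeping in this $e\subseteq N$ case: correctly chasing an edge of $e$ over to $M\setminus N$, back to a canonical vertex of $N\setminus M$, and then showing the translation identities recombine to give both the membership condition and the singleton condition for $N\setminus M$ — exactly the kind of careful index tracking already displayed in Lemma~\ref{lem_union}. Once that is done, Lemmas~\ref{lem_intersection}, \ref{lem_union}, \ref{lem_moins} together with the trivial modules give that $\mathscr{M}(H)$ is a partitive family, which is the next milestone.
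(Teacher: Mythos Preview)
Your strategy matches the paper's exactly, but in the case $e\subseteq N$ the bookkeeping goes astray at the end. The witness for the module $N\setminus M$ has to be the element $m^*\in e\cap(N\setminus M)$, not $m\in e\cap M$; so what you must establish is $(e\setminus\{m^*\})\cup\{n'\}\in E(H)$ for every $n'\in N\setminus M$, not $(e\setminus\{m\})\cup\{n'\}$. Your chain ``$x\mapsto m^*\mapsto n'$'' does not produce either of these. From $f\cap N=\{m^*\}$ together with $f=(e\setminus\{m\})\cup\{x\}$ and $e\setminus\{m\}\subseteq N$, one gets $e\setminus\{m\}=\{m^*\}$, so in fact $e=\{m,m^*\}$ has exactly two vertices and $f=\{x,m^*\}$. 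Translating $f$ along $N$ then yields $\{x,n'\}\in E(H)$ for each $n'\in N$, not $\{m^*,n'\}$. To reach the required edge $\{m,n'\}=(e\setminus\{m^*\})\cup\{n'\}$ you need one more step: apply the $M$-module property to $\{x,n'\}$ (legitimate for $n'\in N\setminus M$, since $x\in\{x,n'\}\cap M$ and $n'\in\{x,n'\}\setminus M$) and swap $x$ back to $m$. The paper carries out precisely this three-step exchange (its variables are $p=x$, $q=m^*$, $r=n'$) and records explicitly the perhaps unexpected consequence that $|e|=2$ is forced in this subcase.
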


\begin{proof}
Consider $M,N\in\mathscr{M}(H)$ such that $M\setminus N\neq\emptyset$. 
To show that $N\setminus M\in\mathscr{M}(H)$, 
consider 
$e\in E(H)$ such that $e\cap(N\setminus M)\neq\emptyset$ and 
$e\setminus(N\setminus M)\neq\emptyset$. 
We distinguish the following two cases. 
\begin{enumerate}
\item Suppose that $e\setminus N\neq\emptyset$. 
Since $N$ is a module of $H$ and $e\cap N\neq\emptyset$, there exists $n\in N$ such that 
$e\cap N=\{n\}$, and 
$(e\setminus\{n\})\cup\{n'\}\in E(H)$ for every $n'\in N$. 
Since $e\cap N=\{n\}$ and $e\cap(N\setminus M)\neq\emptyset$, we obtain 
$e\cap(N\setminus M)=\{n\}$. 
Therefore, 
\begin{equation*}
\begin{cases}
e\cap(N\setminus M)=\{n\}\\
\text{and}\\
\text{$(e\setminus\{n\})\cup\{n'\}\in E(H)$ for every $n'\in N\setminus M$. }
\end{cases}
\end{equation*}
\item Suppose that $e\subseteq N$. 
Since $e\setminus(N\setminus M)\neq\emptyset$, we have $e\cap(M\cap N)\neq\emptyset$. 
Thus $e\cap M\neq\emptyset$, and $e\setminus M\neq\emptyset$ because 
$e\cap(N\setminus M)\neq\emptyset$. 
Since $M$ is a module of $H$, there exists $m\in M$ such that 
$e\cap M=\{m\}$, and 
\begin{equation}\label{E1_lem_moins}
\text{$(e\setminus\{m\})\cup\{m'\}\in E(H)$ for every $m'\in M$. }
\end{equation}
Since $e\cap(M\cap N)\neq\emptyset$, $m\in M\cap N$. 
Consider $p\in M\setminus N$ and $q\in e\cap(N\setminus M)$. 
Set $$f=(e\setminus\{m\})\cup\{p\}.$$
By \eqref{E1_lem_moins}, $f\in E(H)$. 
Clearly, $p\in f\setminus N$ and $q\in f\cap N$. 
Since $N$ is a module of $H$, we have 
$f\cap N=\{q\}$, and 
\begin{equation}\label{E3_lem_moins}
\text{$(f\setminus\{q\})\cup\{r\}\in E(H)$ for every $r\in N\setminus M$. }
\end{equation}
Since $f\cap N=\{q\}$, we obtain $e=mq$, and hence 
\begin{equation}\label{E4_lem_moins}
e\cap(N\setminus M)=\{q\}. 
\end{equation}
Since $e=mq$, we get $f=pq$. 
Moreover, for each $r\in N\setminus M$, set $$g_r=(f\setminus\{q\})\cup\{r\}.$$
Since $f=pq$, we have $g_r=pr$. 
By \eqref{E3_lem_moins}, $g_r\in E(H)$. 
Clearly, $p\in g_r\cap M$ and $r\in g_r\setminus M$. 
Since $M$ is a module of $H$, we obtain $(g_r\setminus\{p\})\cup\{m\}\in E(H)$. 
Since $g_r=pr$, we have 
$$(g_r\setminus\{p\})\cup\{m\}=mr=(e\setminus\{q\})\cup\{r\}$$ because $e=mq$. 
Consequently, for each $r\in N\setminus M$, $(e\setminus\{q\})\cup\{r\}\in E(H)$, where 
$\{q\}=e\cap(N\setminus M)$ by \eqref{E4_lem_moins}. \qedhere
\end{enumerate}
\end{proof}

\begin{prop}\label{prop_Delta}
Given a hypergraph $H$, $\mathscr{M}(H)$ is a partitive family on $V(H)$. 
\end{prop}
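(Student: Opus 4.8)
The plan is to verify directly the three defining properties of a partitive family on $V(H)$, assembling them from the lemmas already established.

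First, I would check the trivial cases: $\emptyset\in\mathscr{M}(H)$ and $V(H)\in\mathscr{M}(H)$ hold vacuously, since there is no $e\in E(H)$ with both $e\cap M\neq\emptyset$ and $e\setminus M\neq\emptyset$ when $M=\emptyset$ or $M=V(H)$. For a singleton $M=\{v\}$, if $e\in E(H)$ meets $M$ and is not contained in $M$, then $e\cap M=\{v\}$ automatically, and the condition ``$(e\setminus\{v\})\cup\{n\}\in E(H)$ for every $n\in M$'' reduces to $(e\setminus\{v\})\cup\{v\}=e\in E(H)$, which holds. So every singleton is a module.

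Second, closure under intersection is exactly Lemma~\ref{lem_intersection}. For the third axiom, suppose $M,N\in\mathscr{M}(H)$ with $M\cap N\neq\emptyset$, $M\setminus N\neq\emptyset$ and $N\setminus M\neq\emptyset$. Then $M\cup N\in\mathscr{M}(H)$ by Lemma~\ref{lem_union} (which needs only $M\cap N\neq\emptyset$). For the symmetric difference, Lemma~\ref{lem_moins} applied with $M\setminus N\neq\emptyset$ gives $N\setminus M\in\mathscr{M}(H)$, and applied with the roles of $M$ and $N$ exchanged (using $N\setminus M\neq\emptyset$) gives $M\setminus N\in\mathscr{M}(H)$. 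Now $M\setminus N$ and $N\setminus M$ are disjoint modules, so by Lemma~\ref{lem_intersection} or directly, and more to the point by applying Lemma~\ref{lem_union}'s argument is not available since they are disjoint; instead I note that $(M\setminus N)\cup(N\setminus M)$ can be obtained as follows: it equals $(M\cup N)\setminus(M\cap N)$. Since $M\cup N\in\mathscr{M}(H)$ and $M\cap N\in\mathscr{M}(H)$ with $(M\cap N)\subsetneq(M\cup N)$ and $(M\cup N)\setminus(M\cap N)\neq\emptyset$, Lemma~\ref{lem_moins} applied to the pair $M\cap N\subseteq M\cup N$ (with ``$M$'' $=M\cap N$ and ``$N$'' $=M\cup N$, noting $(M\cap N)\setminus(M\cup N)=\emptyset$, which is the wrong hypothesis) — so instead I apply Lemma~\ref{lem_moins} with ``$M$'' $=M\cup N$ and ``$N$'' $=$ something; the cleanest route is: apply Lemma~\ref{lem_moins} to $M\setminus N$ and $M\cup N$. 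We have $(M\setminus N)\setminus(M\cup N)=\emptyset$, so that hypothesis fails too. The correct application is Lemma~\ref{lem_moins} with the first argument $M\cup N$ and second argument $M\cap N$: since $(M\cup N)\setminus(M\cap N)\neq\emptyset$, we get $(M\cap N)\setminus(M\cup N)\in\mathscr{M}(H)$, which is empty — not useful. The genuinely needed statement is that $\mathscr{M}(H)$ is closed under the symmetric difference of two overlapping members, and the honest way is to rerun the proof of Lemma~\ref{lem_moins} once more, or to observe $(M\setminus N)\cup(N\setminus M)=(M\cup N)\setminus(M\cap N)$ and invoke Lemma~\ref{lem_moins} with first set $M\cup N$ and second set $M\cap N$ — but that yields $(M\cap N)\setminus(M\cup N)$.

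Given this tangle, the main obstacle is isolating the symmetric-difference closure cleanly from the three lemmas as stated; the safest plan is to prove it by a short direct argument paralleling Lemma~\ref{lem_moins}: let $e\in E(H)$ meet $D:=(M\setminus N)\cup(N\setminus M)$ and not be contained in $D$; since $e\not\subseteq D$, either $e\setminus M\neq\emptyset$ and $e\setminus N\neq\emptyset$, or $e$ meets $M\cap N$; in the first subcase apply that $M$ and $N$ are modules to pin $e\cap M=\{m\}$, $e\cap N=\{n\}$ and propagate replacements within $M\setminus N$ and $N\setminus M$ respectively, checking the two coincide on $D$ via the overlap $M\cap N\neq\emptyset$ just as in Lemma~\ref{lem_moins}; in the second subcase ($e$ meets $M\cap N$) combine with the hypotheses $M\setminus N,N\setminus M\neq\emptyset$ to reduce to the situation already handled by Lemma~\ref{lem_moins}. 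I would write this out compactly, citing Lemmas~\ref{lem_intersection}, \ref{lem_union}, \ref{lem_moins} for the parts they cover, and concluding that all three axioms of a partitive family hold, whence $\mathscr{M}(H)$ is a partitive family on $V(H)$.
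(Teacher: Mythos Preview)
Your handling of the trivial modules and your appeals to Lemmas~\ref{lem_intersection}, \ref{lem_union}, \ref{lem_moins} for intersection, union, and the individual differences are fine, and you are right that the symmetric difference cannot be extracted from those three lemmas by set-algebra alone (your attempts to do so via $(M\cup N)\setminus(M\cap N)$ all run into the asymmetric hypothesis of Lemma~\ref{lem_moins}). A separate argument is genuinely needed, and that is what the paper does.

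The gap is in your sketch of that separate argument. Your case split ``either $e\setminus M\neq\emptyset$ and $e\setminus N\neq\emptyset$, or $e$ meets $M\cap N$'' is workable, but in the first subcase you write ``pin $e\cap M=\{m\}$, $e\cap N=\{n\}$ and propagate replacements\ldots checking the two coincide on $D$''. This glosses over the central difficulty: if $e$ met both $M\setminus N$ and $N\setminus M$ while also leaving $M\cup N$, you would have $|e\cap D|\geq 2$, and no amount of ``propagating replacements'' rescues the module condition. The point that must be made explicit is that $e\setminus(M\cup N)\neq\emptyset$ in this subcase (since $e\cap(M\cap N)=\emptyset$ once $e\cap M=\{m\}$ with $m\in M\setminus N$), and then the fact that $M\cup N$ is a module (Lemma~\ref{lem_union}) forces $|e\cap(M\cup N)|=1$, hence $e\cap D=\{m\}$. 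Your second subcase (``$e$ meets $M\cap N$'') is a pure hand-wave; the paper shows that in the complementary case $e\subseteq M$ one is forced to $e=\{m,n\}$ with $m\in M\setminus N$, $n\in M\cap N$, and the replacement by any $p\in N\setminus M$ is obtained by a two-step chain through $N$ and then $M$.

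For comparison, the paper's organization is cleaner: after reducing by symmetry to $e\cap(M\setminus N)\neq\emptyset$ and using Lemma~\ref{lem_moins} to get $e\cap(M\setminus N)=\{m\}$, it splits on $e\subseteq M$ versus $e\setminus M\neq\emptyset$. In the first case the edge is a pair and a short chain through $M\cap N$ handles all replacements; in the second case the key move is precisely the one you omit, namely invoking $M\cup N\in\mathscr{M}(H)$ to force $e\cap(M\cup N)=\{m\}$, after which the replacement condition for $D$ is inherited from that for $M\cup N$.
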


\begin{proof}
It is easy to verify that $\emptyset\in\mathscr{M}(H)$, $V(H)\in\mathscr{M}(H)$, and for every $v\in V(H)$, 
$\{v\}\in\mathscr{M}(H)$. 
Therefore, it follows from Lemmas \ref{lem_intersection}, \ref{lem_union} and 
\ref{lem_moins} that $\mathscr{M}(H)$ is a weakly partitive family on $V(H)$. 
To prove that $\mathscr{M}(H)$ is a partitive family on $V(H)$, 
consider any $M,N\in\mathscr{M}(H)$ such that $M\setminus N\neq\emptyset$, 
$N\setminus M\neq\emptyset$ and $M\cap N\neq\emptyset$. 
We have to show that $(M\setminus N)\cup(N\setminus M)\in\mathscr{M}(H)$. 
Hence consider $e\in E(H)$ such that 
$e\cap((M\setminus N)\cup(N\setminus M))\neq\emptyset$ and 
$e\setminus((M\setminus N)\cup(N\setminus M))\neq\emptyset$. 
Since $e\cap((M\setminus N)\cup(N\setminus M))\neq\emptyset$, assume for instance that $e\cap(M\setminus N)\neq\emptyset$. 
Clearly $e\setminus(M\setminus N)\neq\emptyset$ because 
$e\setminus((M\setminus N)\cup(N\setminus M))\neq\emptyset$. 
Since $N\setminus M\neq\emptyset$, it follows from Lemma~\ref{lem_moins} that 
$M\setminus N$ is a module of $H$. 
Thus, there exists $m\in M\setminus N$ such that $e\cap(M\setminus N)=\{m\}$. 
We distinguish the following two cases. 
\begin{enumerate}
\item Suppose that $e\subseteq M$. 
Since $e\setminus((M\setminus N)\cup(N\setminus M))\neq\emptyset$, 
$e\cap(M\cap N)\neq\emptyset$. 
Therefore $e\cap N\neq\emptyset$. 
Furthermore, since $e\cap(M\setminus N)\neq\emptyset$, we have 
$e\setminus N\neq\emptyset$. 
Since $N$ is a module of $H$, there exists $n\in N$ such that $e\cap N=\{n\}$. 
Since $e\cap(M\cap N)\neq\emptyset$, we get $e\cap(M\cap N)=\{n\}$. 
Since $e\subseteq M$ and $e\cap(M\setminus N)=\{m\}$, we obtain $e=mn$. 
It follows that 
\begin{equation}\label{prop_Delta_E1}
e\cap((M\setminus N)\cup(N\setminus M))=\{m\}.
\end{equation}
Let $p\in(M\setminus N)\cup(N\setminus M)$. 
We have to show that 
\begin{equation}\label{prop_Delta_E2}
(e\setminus\{m\})\cup\{p\}=np\in E(H).
\end{equation}
Recall that $M\setminus N$ is a module of $H$. 
Consequently \eqref{prop_Delta_E2} holds whenever $p\in M\setminus N$. 
Suppose that $p\in N\setminus M$. 
Since $N$ is a module of $H$ and $mn\in E(H)$, we get $mp\in E(H)$. 
Now, since $M$ is a module of $H$ and $mp\in E(H)$, we obtain $np\in E(H)$. 
It follows that \eqref{prop_Delta_E2} holds for each 
$p\in(M\setminus N)\cup(N\setminus M)$. 
Lastly, it follows from \eqref{prop_Delta_E1} that 
there exists $m\in M\setminus N$ such that 
\begin{equation*}
\begin{cases}
e\cap((M\setminus N)\cup(N\setminus M))=\{m\}\\
\text{and}\\
\text{for each $p\in(M\setminus N)\cup(N\setminus M)$, 
$(e\setminus\{m\})\cup\{p\}\in E(H)$}.
\end{cases}
\end{equation*}
\item Suppose that $e\setminus M\neq\emptyset$. 
Since $e\cap (M\setminus N)=\{m\}$, $m\in e\cap M$. 
Since $M$ is a module of $H$, there exists $m'\in M$ such that $e\cap M=\{m'\}$. 
Since $e\cap (M\setminus N)=\{m\}$, we have $m=m'$, and hence 
$$e\cap (M\setminus N)=e\cap M=\{m\}.$$
It follows that $e\cap(M\cap N)=\emptyset$. 
Since $e\setminus((M\setminus N)\cup(N\setminus M))\neq\emptyset$, we obtain 
$$e\setminus(M\cup N)\neq\emptyset.$$
Since $M\cap N\neq\emptyset$, it follows from Lemma~\ref{lem_union} that $M\cup N$ is a module of $H$. 
Therefore, there exists $p\in M\cup N$ such that $e\cap(M\cup N)=\{p\}$, and for every $q\in M\cup N$, 
$(e\setminus\{p\})\cup\{q\}\in E(H)$. 
Since $e\cap M=\{m\}$, we get $p=m$. 
Thus, $e\cap(M\cup N)=\{m\}$, and hence 
\begin{equation}\label{prop_Delta_E4}
e\cap((M\setminus N)\cup(N\setminus M))=\{m\}.
\end{equation}
Since $p=m$, we have $(e\setminus\{m\})\cup\{q\}\in E(H)$ for every $q\in M\cup N$. 
It follows that 
\begin{equation*}
\text{$(e\setminus\{m\})\cup\{q\}\in E(H)$ for every $q\in(M\setminus N)\cup(N\setminus M)$,}
\end{equation*}
where $\{m\}=e\cap((M\setminus N)\cup(N\setminus M))$ by \eqref{prop_Delta_E4}. \qedhere
\end{enumerate}
\end{proof}

To prove Proposition~\ref{properties_modules}, we need the next four lemmas. 

\begin{lem}\label{lem_(A2)}
Given a hypergraph $H$, consider subsets $W$ and $W'$ of $V(H)$. 
If $W\subseteq W'$, then $\{M'\cap W:M'\in\mathscr{M}(H[W'])\}\subseteq\mathscr{M}(H[W])$ 
(see Definition~\ref{covering}, Assertion~(A2)). 
\end{lem}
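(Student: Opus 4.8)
The plan is to fix $W\subseteq W'\subseteq V(H)$ and $M'\in\mathscr{M}(H[W'])$, put $M:=M'\cap W$, and check directly that $M$ satisfies Definition~\ref{defi_module_hyper} relative to $H[W]$. The whole argument rests on one elementary observation: for any set $e$ with $e\subseteq W$ one has $e\cap M=e\cap M'$ and $e\setminus M=e\setminus M'$, since for $x\in W$ the conditions $x\in M'\cap W$ and $x\in M'$ are equivalent. I will also use the trivial inclusion $E(H[W])\subseteq E(H[W'])$, which holds because $e\in E(H)$ and $e\subseteq W\subseteq W'$ imply $e\in E(H[W'])$.

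Concretely, I would start from an arbitrary $e\in E(H[W])$ with $e\cap M\neq\emptyset$ and $e\setminus M\neq\emptyset$. By the inclusion above, $e\in E(H[W'])$, and by the observation $e\cap M'=e\cap M\neq\emptyset$ and $e\setminus M'=e\setminus M\neq\emptyset$. Since $M'$ is a module of $H[W']$, there is $m\in M'$ with $e\cap M'=\{m\}$ and $(e\setminus\{m\})\cup\{n\}\in E(H[W'])$ for every $n\in M'$. Now $m\in e\subseteq W$, so in fact $m\in M'\cap W=M$, whence $e\cap M=e\cap M'=\{m\}$ with $m\in M$; this is the first requirement. For the second, take any $n\in M=M'\cap W$; then $n\in M'$, so $(e\setminus\{m\})\cup\{n\}\in E(H[W'])$, and moreover $(e\setminus\{m\})\cup\{n\}\subseteq W$ because $e\setminus\{m\}\subseteq e\subseteq W$ and $n\in W$; therefore $(e\setminus\{m\})\cup\{n\}\in E(H[W])$. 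This shows $M\in\mathscr{M}(H[W])$, and since $M'$ was arbitrary, the claimed inclusion $\{M'\cap W:M'\in\mathscr{M}(H[W'])\}\subseteq\mathscr{M}(H[W])$ follows.

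I do not expect a genuine obstacle here; the content is purely the bookkeeping of intersections and set differences against $W$. The two points worth stating carefully are that the witness $m$ automatically lies in $W$ (since it lies in the edge $e\subseteq W$), so that restricting $M'$ to $W$ does not destroy the "$e\cap M$ is a singleton" property, and that each edge $(e\setminus\{m\})\cup\{n\}$ produced by the module property of $M'$ remains a subset of $W$ when $n\in M$, so that it actually belongs to $E(H[W])$ and not merely to $E(H[W'])$.
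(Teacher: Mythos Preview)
Your proof is correct and follows essentially the same approach as the paper: both arguments take an edge $e\in E(H[W])$ meeting and missing $M'\cap W$, use $e\subseteq W$ to transfer the hypotheses to $M'$ in $H[W']$, apply the module property of $M'$ there, and then observe that the singleton witness lies in $W$ and that each replacement edge stays inside $W$. Your explicit remark that $e\cap M=e\cap M'$ and $e\setminus M=e\setminus M'$ whenever $e\subseteq W$ is exactly the bookkeeping the paper carries out line by line.
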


\begin{proof}
Let $M'$ be a module of $H[W']$. 
To show that $M'\cap W$ is a module of $H[W]$, consider $e\in E(H[W])$ such that 
$e\cap(M'\cap W)\neq\emptyset$ and $e\setminus(M'\cap W)\neq\emptyset$. 
We obtain $e\in E(H[W'])$ and $e\cap M'\neq\emptyset$. 
Since $e\setminus(M'\cap W)\neq\emptyset$ and $e\subseteq W$, we get 
$e\setminus M'\neq\emptyset$. 
Since $M'$ is a module of $H[W']$, there exists $m'\in M'$ such that 
$e\cap M'=\{m'\}$, and $(e\setminus\{m'\})\cup\{n'\}\in E(H[W'])$ for each 
$n'\in M'$. 
Let $n'\in M'\cap W$. 
Since $e\subseteq W$, $(e\setminus\{m'\})\cup\{n'\}\subseteq W$. 
Hence $(e\setminus\{m'\})\cup\{n'\}\in E(H[W])$ because 
$(e\setminus\{m'\})\cup\{n'\}\in E(H[W'])$. 
Moreover, since $e\cap(M'\cap W)\neq\emptyset$ and $e\cap M'=\{m'\}$, we obtain 
$e\cap(M'\cap W)=\{m'\}$. 
\end{proof}

\begin{lem}\label{lem_(A3)}
Given a hypergraph $H$, consider subsets $W$ and $W'$ of $V(H)$ such that $W\subseteq W'$. 
If $W\in\mathscr{M}(H[W'])$, then $\{M'\in\mathscr{M}(H[W']):M'\subseteq W\}=\mathscr{M}(H[W])$ 
(see Definition~\ref{covering}, Assertion~(A3)). 
\end{lem}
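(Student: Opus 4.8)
The plan is to prove the two inclusions separately, the substantive content lying in only one of them. For the inclusion ``$\subseteq$'', if $M'\in\mathscr{M}(H[W'])$ satisfies $M'\subseteq W$, then by Lemma~\ref{lem_(A2)} (Assertion~(A2)) the set $M'\cap W=M'$ is a module of $H[W]$; this direction uses nothing about $W$ being a module of $H[W']$, and it also records that every such $M'$ is a subset of $W$, as demanded by the form of the right-hand side.

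For the inclusion ``$\supseteq$'', take $M\in\mathscr{M}(H[W])$ (so automatically $M\subseteq W$) and show $M\in\mathscr{M}(H[W'])$. Consider $e\in E(H[W'])$ with $e\cap M\neq\emptyset$ and $e\setminus M\neq\emptyset$, and split into two cases: $e\subseteq W$, or $e\setminus W\neq\emptyset$. If $e\subseteq W$, then $e\in E(H[W])$, and the module property of $M$ in $H[W]$ gives $m\in M$ with $e\cap M=\{m\}$ and $(e\setminus\{m\})\cup\{n\}\in E(H[W])$ for every $n\in M$; since $e\cup M\subseteq W\subseteq W'$, each such set is also an edge of $H[W']$, which is what is needed. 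If $e\setminus W\neq\emptyset$, then since $e\cap M\neq\emptyset$ and $M\subseteq W$ we have $e\cap W\neq\emptyset$, so the hypothesis that $W$ is a module of $H[W']$ provides $w\in W$ with $e\cap W=\{w\}$ and $(e\setminus\{w\})\cup\{u\}\in E(H[W'])$ for every $u\in W$. Now $\emptyset\neq e\cap M\subseteq e\cap W=\{w\}$ forces $e\cap M=\{w\}$ and $w\in M$; taking $m=w$ and letting $u$ range over $M\subseteq W$ yields precisely the conditions required by Definition~\ref{defi_module_hyper} for $M$ to be a module of $H[W']$.

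I do not anticipate a genuine obstacle here; the one case deserving care is $e\setminus W\neq\emptyset$, where the key observation is that the module $W$ ``pinches'' the trace $e\cap W$ down to a single vertex, which must then be the unique vertex of $e\cap M$, after which the substitution property for $W$ is inherited verbatim by $M$. The rest is routine bookkeeping.
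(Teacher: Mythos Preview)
Your proposal is correct and follows essentially the same approach as the paper's proof: the inclusion $\subseteq$ is obtained via Lemma~\ref{lem_(A2)}, and the reverse inclusion is handled by the same two-case split on whether $e\subseteq W$ or $e\setminus W\neq\emptyset$, with the second case reduced to the module property of $W$ in $H[W']$ exactly as you describe.
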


\begin{proof}
By Lemma~\ref{lem_(A2)}, 
$\{M'\in\mathscr{M}(H[W']):M'\subseteq W\}\subseteq\mathscr{M}(H[W])$. 
Conversely, consider a module $M$ of $H[W]$. 
To prove that $M$ is a module of $H[W']$, consider $e\in E(H[W'])$ such that 
$e\cap M\neq\emptyset$ and $e\setminus M\neq\emptyset$. 
We distinguish the following two cases. 
\begin{enumerate}
\item Suppose that $e\subseteq W$. 
We obtain $e\in E(H[W])$. 
Since $M$ is a module of $H[W]$, there exists $m\in M$ such that $e\cap M=\{m\}$, and for each $n\in M$, we have $(e\setminus\{m\})\cup\{n\}\in E(H[W])$. 
Hence $(e\setminus\{m\})\cup\{n\}\in E(H[W'])$. 
\item Suppose that $e\setminus W\neq\emptyset$. 
Clearly, $e\cap W\neq\emptyset$ because $e\cap M\neq\emptyset$. 
Since $W$ is a module of $H[W']$, there exists $w\in W$ such that $e\cap W=\{w\}$. 
Furthermore, 
\begin{equation}\label{properties_modules_E1}
\text{for each $w'\in W$, $(e\setminus\{w\})\cup\{w'\}\in E(H[W'])$. }
\end{equation}
Since $e\cap M\neq\emptyset$, we get $e\cap M=\{w\}$. 
Clearly, it follows from \eqref{properties_modules_E1} that $(e\setminus\{w\})\cup\{w'\}\in E(H[W'])$ 
for each $w'\in M$. \qedhere
\end{enumerate}
\end{proof}

\begin{lem}\label{lem_(A4)}
Given a hypergraph $H$, consider subsets $W$ and $W'$ of $V(H)$ such that $W\subseteq W'$. 
For any $M\in\mathscr{M}(H[W])$ and $M'\in\mathscr{M}(H[W'])$, 
if $M\cap M'=\emptyset$ and $M'\cap W\neq\emptyset$, then $M\in\mathscr{M}(H[W\cup M'])$ 
(see Definition~\ref{covering}, Assertion~(A4)). 
\end{lem}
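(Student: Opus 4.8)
The plan is to take an arbitrary $e\in E(H[W\cup M'])$ with $e\cap M\neq\emptyset$ and $e\setminus M\neq\emptyset$, and to verify the two conditions of Definition~\ref{defi_module_hyper} for $M$ in $H[W\cup M']$. The point to exploit is that $W\cup M'\subseteq W'$ (since $W\subseteq W'$ and $M'\subseteq W'$), so every edge of $H[W\cup M']$ is also an edge of $H[W']$; thus both the module property of $M$ inside $H[W]$ and that of $M'$ inside $H[W']$ are at our disposal, and the whole argument is a transfer back and forth between these two.

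First I would dispose of the case $e\subseteq W$: then $e\in E(H[W])$ and the conclusion is immediate from $M\in\mathscr{M}(H[W])$, the edges $(e\setminus\{m\})\cup\{n\}$ so produced being subsets of $W\subseteq W\cup M'$. So assume $e\not\subseteq W$; since $e\subseteq W\cup M'$ this forces $e$ to meet $M'\setminus W$, in particular $e\cap M'\neq\emptyset$, and since $M\cap M'=\emptyset$ and $e\cap M\neq\emptyset$ we also get $e\setminus M'\neq\emptyset$. Applying the module property of $M'$ in $H[W']$ to $e$ gives $m'\in M'$ with $e\cap M'=\{m'\}$; here $m'\notin W$ and $e\setminus\{m'\}=e\setminus M'\subseteq W$.

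Next I would ``pull $e$ into $W$''. Pick $w\in M'\cap W$, which is nonempty by hypothesis, and set $f=(e\setminus\{m'\})\cup\{w\}$. The module property of $M'$ gives $f\in E(H[W'])$, and since $f\subseteq W$ in fact $f\in E(H[W])$; moreover $f\cap M=e\cap M\neq\emptyset$ and $w\in f\setminus M$. Applying $M\in\mathscr{M}(H[W])$ to $f$ yields $m\in M$ with $f\cap M=\{m\}$, hence $e\cap M=\{m\}$ — the first required condition — and $(f\setminus\{m\})\cup\{n\}\in E(H[W])$ for every $n\in M$.

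Finally I would ``push back out to $M'$''. For $n\in M$ the set $g_n=(f\setminus\{m\})\cup\{n\}$ equals $(e\setminus\{m,m'\})\cup\{w,n\}$, lies in $E(H[W])\subseteq E(H[W'])$, and satisfies $g_n\cap M'=\{w\}$ with $g_n\setminus M'\neq\emptyset$; so the module property of $M'$ permits replacing $w$ by $m'$, giving $(g_n\setminus\{w\})\cup\{m'\}\in E(H[W'])\subseteq E(H)$. A short set computation shows $(g_n\setminus\{w\})\cup\{m'\}=(e\setminus\{m\})\cup\{n\}$, which is a subset of $W\cup M'$, so this is exactly the second required condition. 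The only delicate point I expect is the bookkeeping of the disjointness relations — $m\neq m'$, $w\notin e$, $n\notin M'$, $m\in e$ — needed to justify the set identities relating $e$, $f$ and $g_n$; beyond that there is no real obstacle.
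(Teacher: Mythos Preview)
Your proposal is correct and follows essentially the same route as the paper's proof: the same case split on whether $e\subseteq W$, the same ``pull into $W$'' step via an element of $M'\cap W$ to form $f$, and the same ``push back out'' step via the module property of $M'$ applied to $g_n$, with the same final set identity $(g_n\setminus\{w\})\cup\{m'\}=(e\setminus\{m\})\cup\{n\}$. The only differences are notational ($w$ versus the paper's $w'$) and the level of detail in the bookkeeping, which you correctly flag as the only delicate point.
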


\begin{proof}
Consider a module $M$ of $H[W]$ and a module $M'$ of $H[W']$ such that 
$M\cap M'=\emptyset$ and $M'\cap W\neq\emptyset$. 
We have to show that $M$ is a module of $H[W\cup M']$. 
Hence consider $e\in E(H[W\cup M'])$ such that $e\cap M\neq\emptyset$ and $e\setminus M\neq\emptyset$. 
We distinguish the following two cases. 
\begin{enumerate}
\item Suppose that $e\subseteq W$. 
We obtain $e\in E(H[W])$. 
Since $M$ is a module of $H[W]$, there exists $m\in M$ such that $e\cap M=\{m\}$, and for each $n\in M$, we have $(e\setminus\{m\})\cup\{n\}\in E(H[W])$. 
Hence $(e\setminus\{m\})\cup\{n\}\in E(H[W\cup M'])$. 
\item Suppose that $e\setminus W\neq\emptyset$. 
We obtain $e\cap(M'\setminus W)\neq\emptyset$. 
Since $e\cap M\neq\emptyset$, we have $e\setminus M'\neq\emptyset$. 
Since $M'$ is a module of $H[W']$, there exists $m'\in M'$ such that $e\cap M'=\{m'\}$, and 
\begin{equation}\label{properties_modules_E2}
\text{for each $n'\in M'$, $(e\setminus\{m'\})\cup\{n'\}\in E(H[W'])$. }
\end{equation}
Since $e\cap(M'\setminus W)\neq\emptyset$ and $e\cap M'=\{m'\}$, we get $e\cap(M'\setminus W)=\{m'\}$. 
Let $w'\in W\cap M'$. 
Set $$f=(e\setminus\{m'\})\cup\{w'\}.$$
By \eqref{properties_modules_E2}, $f\in E(H[W'])$. 
Furthermore, since $e\cap(M'\setminus W)=\{m'\}$, we obtain $f\subseteq W$, and hence $f\in E(H[W])$. 
Since $e\cap M\neq\emptyset$, we have $f\cap M\neq\emptyset$. 
Moreover, $w'\in f\setminus M$ because $w'\in W\cap M'$ and $M\cap M'=\emptyset$. 
Since $M$ is a module of $H[W]$, there exists $m\in M$ such that $f\cap M=\{m\}$. 
Since $f=(e\setminus\{m'\})\cup\{w'\}$, with $m',w'\not\in M$, we get $e\cap M=f\cap M$, so 
$$e\cap M=\{m\}.$$
Lastly, consider $n\in M$. 
We have to verify that 
\begin{equation}\label{properties_modules_E3}
(e\setminus\{m\})\cup\{n\}\in E(H[W']). 
\end{equation}
Set $$g_n=(f\setminus\{m\})\cup\{n\}.$$ 
Since $M$ is a module of $H[W]$ such that $f\cap M=\{m\}$ and $w'\in f\setminus M$, 
$g_n\in E(H[W])$. 
Hence $g_n\in E(H[W'])$. 
Since $n\in g_n\cap M$ and $M\cap M'=\emptyset$, $n\in g_n\setminus M'$. 
Clearly, $w'\in M'$ because $w'\in W\cap M'$. 
Furthermore, $w'\in f$ because $f=(e\setminus\{m'\})\cup\{w'\}$. 
Since $g_n=(f\setminus\{m\})\cup\{n\}$, $m\in M$ and $M\cap M'=\emptyset$, we have $w'\in g_n$. 
It follows that $w'\in g_n\cap M'$. 
Since $M'$ is a module of $H[W']$, we have $g_n\cap M'=\{w'\}$ and 
$(g_n\setminus\{w'\})\cup\{m'\}\in E(H[W'])$. 
We have 
\begin{align*}
(g_n\setminus\{w'\})\cup\{m'\}&=(((f\setminus\{m\})\cup\{n\})\setminus\{w'\})\cup\{m'\}\\
&=(f\setminus\{m,w'\})\cup\{m',n\}\\
&=(((e\setminus\{m'\})\cup\{w'\})\setminus\{m,w'\})\cup\{m',n\}\\
&=(e\setminus\{m,m',w'\})\cup\{m',n,w'\}\\
&=(e\setminus\{m\})\cup\{n\}.
\end{align*}
Therefore $(e\setminus\{m\})\cup\{n\}\in E(H[W'])$. 
Since $e\subseteq W\cup M'$ and $n\in M$, we get $(e\setminus\{m\})\cup\{n\}\in E(H[W\cup M'])$. Hence \eqref{properties_modules_E3} holds. \qedhere
\end{enumerate}
\end{proof}

\begin{lem}\label{lem_(A5)}
Given a hypergraph $H$, consider subsets $W$ and $W'$ of $V(H)$ such that $W\subseteq W'$. 
For any $M\in\mathscr{M}(H[W])$ and $M'\in\mathscr{M}(H[W'])$, 
if $M\cap M'\neq\emptyset$, then $M\cup M'\in\mathscr{M}(H[W\cup M'])$ 
(see Definition~\ref{covering}, Assertion~(A5)). 
\end{lem}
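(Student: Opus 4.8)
The plan is to verify Definition~\ref{defi_module_hyper} for $M\cup M'$ inside the hypergraph $H[W\cup M']$ by a direct argument. Fix an edge $e\in E(H[W\cup M'])$ with $e\cap(M\cup M')\neq\emptyset$ and $e\setminus(M\cup M')\neq\emptyset$, choose $p\in e\setminus(M\cup M')$ (so $p\in W$, since $e\subseteq W\cup M'$ and $p\notin M'$), and choose $m_0\in M\cap M'$; note that $M\subseteq W$ because $M\in\mathscr{M}(H[W])$, hence $m_0\in W$. I would then split on whether $e\subseteq W$, mirroring the two cases in the proof of Lemma~\ref{lem_(A4)}.

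\emph{Case $e\subseteq W$.} Then $e\in E(H[W])$. By Lemma~\ref{lem_(A2)}, $M'\cap W\in\mathscr{M}(H[W])$, and since $M\cap(M'\cap W)=M\cap M'\neq\emptyset$, Lemma~\ref{lem_union} applied to the hypergraph $H[W]$ shows that $N:=M\cup(M'\cap W)$ is a module of $H[W]$. As $e\subseteq W$ we have $e\cap N=e\cap(M\cup M')$, so the module property of $N$ in $H[W]$ gives a vertex $x\in N$ with $e\cap(M\cup M')=\{x\}$ and $(e\setminus\{x\})\cup\{z\}\in E(H[W])$ for every $z\in N$; this already covers every $y\in(M\cup M')\cap W$. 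For $y\in M'\setminus W$ I would transport the conclusion through $M'$: since $m_0\in N$, the set $f:=(e\setminus\{x\})\cup\{m_0\}$ lies in $E(H[W])\subseteq E(H[W'])$; as $m_0\in f\cap M'$ and $p\in f\setminus M'$, the module property of $M'$ in $H[W']$ forces $f\cap M'=\{m_0\}$ and $(e\setminus\{x\})\cup\{n'\}=(f\setminus\{m_0\})\cup\{n'\}\in E(H[W'])$ for every $n'\in M'$; each such set is contained in $W\cup M'$ and therefore belongs to $E(H[W\cup M'])$.

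\emph{Case $e\not\subseteq W$.} Then $e\cap(M'\setminus W)\neq\emptyset$, so $e\cap M'\neq\emptyset$ and $p\in e\setminus M'$; moreover $e\subseteq W\cup M'\subseteq W'$, so $e\in E(H[W'])$. The module property of $M'$ in $H[W']$ gives $m'\in M'$ with $e\cap M'=\{m'\}$ and $(e\setminus\{m'\})\cup\{n'\}\in E(H[W'])$ for every $n'\in M'$; since $e$ meets $M'\setminus W$, in fact $m'\in M'\setminus W$, whence $m'\notin M$. The key step is to show that $e\cap M=\emptyset$: otherwise $f:=(e\setminus\{m'\})\cup\{m_0\}$ would be an edge of $H[W]$ (it lies in $W$ because $e\setminus\{m'\}=e\setminus M'\subseteq W$) with $p\in f\setminus M$ and $f\cap M=(e\cap M)\cup\{m_0\}$, and the latter is not a singleton because $m_0\notin e$ (as $m_0\in W\cap M'$ while $e\cap M'=\{m'\}$ with $m'\notin W$), contradicting that $M$ is a module of $H[W]$. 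Granting $e\cap M=\emptyset$ we obtain $e\cap(M\cup M')=\{m'\}$; the conclusion for $y\in M'$ is immediate as in the previous case, and for $y\in M$ one uses $f=(e\setminus\{m'\})\cup\{m_0\}\in E(H[W])$ — now with $f\cap M=\{m_0\}$ and $p\in f\setminus M$ — together with the module property of $M$ in $H[W]$ to get $(e\setminus\{m'\})\cup\{n\}=(f\setminus\{m_0\})\cup\{n\}\in E(H[W])$ for every $n\in M$.

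The only routine ingredients are the handful of set identities such as $f\setminus\{m_0\}=e\setminus\{x\}$ and $(f\setminus\{m'\})\cup\{n'\}=(e\setminus\{m'\})\cup\{n'\}$, all of which follow from noting that the vertex being re-inserted does not already occur in the edge one started from. I expect the genuine obstacle to lie in the case $e\not\subseteq W$: the nonobvious fact is that such an edge cannot meet $M$ at all, and the argument establishing it — replacing $m'$ by $m_0$ to push $e$ into $H[W]$ and then invoking the module property of $M$ — is the one new idea, everything else being the same kind of bookkeeping as in Lemmas~\ref{lem_(A2)}--\ref{lem_(A4)}.
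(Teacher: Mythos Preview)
Your argument is correct, but it is organized differently from the paper's. The paper splits on whether $e\cap M'\neq\emptyset$ or $e\cap M'=\emptyset$; in the first case it forms $f=(e\setminus\{m'\})\cup\{m\}$, pushes it into $H[W]$, and reads off $f\cap(M\cup M')=\{m\}$ directly from the module property of $M$, while the second case is reduced back to the first via $e'=(e\setminus\{q\})\cup\{m\}$. You instead split on whether $e\subseteq W$, and in that case you shortcut the work by invoking Lemma~\ref{lem_(A2)} and Lemma~\ref{lem_union} to see at once that $N=M\cup(M'\cap W)$ is a module of $H[W]$, which immediately gives the singleton intersection and all replacements inside $W$; only the transport to $M'\setminus W$ remains. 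Your second case (where $e$ meets $M'\setminus W$) is essentially the paper's first case specialized to $m'\notin W$, but you isolate the crux as the clean claim $e\cap M=\emptyset$, proved by the same $f$-substitution. The trade-off: your route is more modular and reuses earlier lemmas, yielding a shorter Case~1, whereas the paper's proof is self-contained and avoids forward references to Lemmas~\ref{lem_(A2)} and \ref{lem_union}. Both are equally valid; the set identities you flag as ``routine'' are indeed routine and hold for the reasons you state.
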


\begin{proof}
Consider a module $M$ of $H[W]$ and a module $M'$ of $H[W']$ such that $M\cap M'\neq\emptyset$. 
We have to prove that $M\cup M'$ is a module of $H[W\cup M']$. 
Hence consider $e\in E(H[W\cup M'])$ such that $e\cap(M\cup M')\neq\emptyset$ and 
$e\setminus(M\cup M')\neq\emptyset$. 
Let $m\in M\cap M'$. 
We distinguish the following two cases. 
\begin{enumerate}
\item Suppose that $e\cap M'\neq\emptyset$. 
Clearly $e\in E(H[W'])$. 
Moreover, $e\setminus M'\neq\emptyset$ because $e\setminus(M\cup M')\neq\emptyset$. 
Since $M'$ is a module of $H[W']$, there exists $m'\in M'$ such that $e\cap M'=\{m'\}$, and 
$(e\setminus\{m'\})\cup\{n'\}\in E(H[W'])$ for every $n'\in M'$. 
Hence, for every $n'\in M'$, we have 
\begin{equation}\label{properties_modules_E4}
(e\setminus\{m'\})\cup\{n'\}\in E(H[W\cup M']).
\end{equation} 
In particular, $(e\setminus\{m'\})\cup\{m\}\in E(H[W\cup M'])$. 
Set $$f=(e\setminus\{m'\})\cup\{m\}.$$
Since $e\cap M'=\{m'\}$, we obtain $f\cap M'=\{m\}$. 
Hence $m\in f\cap M$. 
It follows that $f\in E(H[W])$ because $e\in E(H[W\cup M'])$. 
Clearly $e\setminus M\neq\emptyset$ because $e\setminus(M\cup M')\neq\emptyset$. 
Since $M$ is a module of $H[W]$, there exists $n\in M$ such that $f\cap M=\{n\}$, and 
$(f\setminus\{n\})\cup\{p\}\in E(H[W])$ for every $p\in M$. 
Since $m\in f\cap M$, we get $m=n$. 
Therefore, $f\cap M=f\cap M'=\{m\}$. 
It follows that $f\cap(M\cup M')=\{m\}$, so $$e\cap(M\cup M')=\{m'\}.$$ 
By \eqref{properties_modules_E4}, it remains to show that 
$(e\setminus\{m'\})\cup\{n\}\in E(H[W\cup M'])$ for each $n\in M$. 
Let $n\in M$. 
Recall that $f\cap(M\cup M')=\{m\}$ and $e\cap(M\cup M')=\{m'\}$. 
Thus $e\setminus(M\cup M')=f\setminus(M\cup M')$. 
Hence $f\setminus(M\cup M')\neq\emptyset$ because $e\setminus(M\cup M')\neq\emptyset$. 
It follows that $f\setminus M\neq\emptyset$. 
Recall that $f\in E(H[W])$. 
Since $M$ is a module of $H[W]$, we obtain $(f\setminus\{m\})\cup\{n\}\in E(H[W])$. 
We have 
\begin{align*}
(f\setminus\{m\})\cup\{n\}&=((e\setminus\{m'\})\cup\{m\})\setminus\{m\})\cup\{n\}\\
&=(e\setminus\{m'\})\cup\{n\}.
\end{align*}
Therefore $(e\setminus\{m'\})\cup\{n\}\in E(H[W])$, so $(e\setminus\{m'\})\cup\{n\}\in E(H[W\cup M'])$. 
\item Suppose that $e\cap M'=\emptyset$. 
We get $e\in E(H[W])$. 
Clearly $e\setminus M\neq\emptyset$ because $e\setminus(M\cup M')\neq\emptyset$. 
Furthermore, since $e\cap(M\cup M')\neq\emptyset$ and $e\cap M'=\emptyset$, we obtain 
$e\cap(M\setminus M')\neq\emptyset$. 
Since $M$ is a module of $H[W]$, there exists $q\in M$ such that 
\begin{equation}\label{properties_modules_E5}
e\cap M=\{q\}
\end{equation} 
and 
\begin{equation}\label{properties_modules_E6}
\text{for every $r\in M$, $(e\setminus\{q\})\cup\{r\}\in E(H[W])$.}
\end{equation} 
Since $e\cap M'=\emptyset$, it follows from \eqref{properties_modules_E5} that $q\in M\setminus M'$ 
and 
\begin{equation}\label{properties_modules_E6.a}
e\cap(M\cup M')=\{q\}.
\end{equation} 
By \eqref{properties_modules_E6}, 
$(e\setminus\{q\})\cup\{m\}\in E(H[W])$. 
Set $$e'=(e\setminus\{q\})\cup\{m\}.$$
Clearly, $m\in e'\cap M'$. 
Moreover, since $e\cap(M\cup M')=\{q\}$, we obtain 
\begin{equation*}
\begin{cases}
e'\cap(M\cup M')=\{m\}\\
\text{and}\\
e\setminus(M\cup M')=e'\setminus(M\cup M').
\end{cases}
\end{equation*} 
Therefore $e'\cap(M\cup M')\neq\emptyset$, and 
$e'\setminus(M\cup M')\neq\emptyset$ because 
$e\setminus(M\cup M')\neq\emptyset$. 
It follows from the first case above applied with $e'$ that 
\begin{equation}\label{properties_modules_E8}
\text{for every $s\in M\cup M'$, $(e'\setminus\{m\})\cup\{s\}\in E(H[W\cup M'])$.}
\end{equation} 
Recall that $e\cap(M\cup M')=\{q\}$ by \eqref{properties_modules_E6.a}. 
Consequently, we have to show that 
$(e\setminus\{q\})\cup\{s\}\in E(H[W\cup M'])$ for every $s\in M\cup M'$. 
Let $s\in M\cup M'$. 
We have 
\begin{align*}
(e'\setminus\{m\})\cup\{s\}&=((e\setminus\{q\})\cup\{m\})\setminus\{m\})\cup\{s\}\\
&=(e\setminus\{q\})\cup\{s\}.
\end{align*}
It follows from \eqref{properties_modules_E8} that 
$(e\setminus\{q\})\cup\{s\}\in E(H[W\cup M'])$. \qedhere
\end{enumerate}
\end{proof}

Now, we can prove Proposition~\ref{properties_modules}. 

\begin{proof}[Proof of Proposition~\ref{properties_modules}]
For Assertion (A1) (see Definition~\ref{covering}), consider $W\subseteq V(H)$. 
By Proposition~\ref{prop_Delta}, 
$\mathscr{M}(H[W])$ is a partitive family on $W$. 
Furthermore, it follows from Lemmas~\ref{lem_(A2)}, \ref{lem_(A3)}, \ref{lem_(A4)}, and \ref{lem_(A5)} that 
Assertions (A2), (A3), (A4), and (A5) hold. 
\end{proof}

\subsection{Gallai's decomposition}\label{subs Gallai's decomposition}

The purpose of the subsection is to demonstrate Theorem~\ref{Thbis_Gallai}. 
We use the following definition.

\begin{defi}\label{defi_transverse}
Let $P$ be a partition of a set $S$. 
Consider $Q\subseteq P$. 
A subset $W$ of $S$ is a {\em transverse} of $Q$ if $W\subseteq\cup Q$ and $|W\cap X|=1$ for each $X\in Q$. 
\end{defi}
 
 The next remark makes clearer Definition~\ref{quotient_hyper}. 

\begin{rem}\label{rem_partition}
Consider a modular partition $P$ of a hypergraph $H$. 
Let $e\in E(H)$ such that $|e/P|\geq 2$ (see Notation~\ref{W/P}).  
Given $X\in e/P$, we have $e\cap X\neq\emptyset$, and $e\setminus X\neq\emptyset$ because $|e/P|\geq 2$. 
Since $X$ is a module of $H$, we obtain $|e\cap X|=1$. 
Therefore, $e$ is a transverse of $e/P$. 
Moreover, since each element of $e/P$ is a module of $H$, we obtain that each transverse of $e/P$ is an edge of $H$. 

Given $\mathcal{E}\subseteq P$ such that $|\mathcal{E}|\geq 2$, 
it follows that $\mathcal{E}\in E(H/P)$ if and only if every transverse of $\mathcal{E}$ is an edge of $H$. 

Lastly, consider a transverse $t$ of $P$. 
The function $\theta_t$ from $t$ to $P$, which maps each $x\in t$ to the unique element of $P$ containing $x$, 
is an isomorphism from $H[t]$ onto $H/P$. 
\end{rem}

In the next proposition, we study the links between the modules of a hypergraph with those of its quotients. 

\begin{prop}\label{prop1_partition}
Given a modular partition $P$ of a hypergraph $H$, the following two assertions hold
\begin{enumerate}
\item if $M$ is a module of $H$, then $M/P$ is a module of $H/P$ (see Notation~\ref{W/P}); 
\item if $\mathcal{M}$ is a module of $H/P$, then $\cup\mathcal{M}$ is a module of $H$. 
\end{enumerate}
\end{prop}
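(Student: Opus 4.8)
The plan is to prove both assertions directly from Definition~\ref{defi_module_hyper}, using Remark~\ref{rem_partition} to translate between edges of $H$ and transverses of subsets of $P$. Throughout I write $\pi\colon V(H)\to P$ for the map sending $v$ to the block containing it, so that for $W\subseteq V(H)$ we have $W/P=\pi(W)$, and recall that for an edge $e\in E(H)$ with $|e/P|\geq 2$, Remark~\ref{rem_partition} tells us $e$ is a transverse of $e/P$ and, conversely, every transverse of $e/P$ is an edge of $H$.

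First I would prove Assertion (1). Let $M$ be a module of $H$; I want $M/P=\pi(M)$ to be a module of $H/P$. So take $\mathcal{E}\in E(H/P)$ with $\mathcal{E}\cap\pi(M)\neq\emptyset$ and $\mathcal{E}\setminus\pi(M)\neq\emptyset$; note $|\mathcal{E}|\geq 2$. Pick a transverse $e$ of $\mathcal{E}$ that uses, in each block of $\mathcal{E}\cap\pi(M)$, a vertex actually lying in $M$ — this is possible since each such block meets $M$. By Remark~\ref{rem_partition}, $e\in E(H)$, and $e\cap M\neq\emptyset$ while $e\setminus M\neq\emptyset$ (the latter because $e$ hits a block outside $\pi(M)$, which is disjoint from $M$). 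Since $M$ is a module of $H$, there is $m\in M$ with $e\cap M=\{m\}$; hence exactly one block of $\mathcal{E}$ meets $M$, giving $\mathcal{E}\cap\pi(M)=\{\pi(m)\}$. Now for any block $X\in\pi(M)$, choose any vertex $n\in X\cap M$; then $(e\setminus\{m\})\cup\{n\}\in E(H)$ by the module property of $M$, and this set is again a transverse of the subset $(\mathcal{E}\setminus\{\pi(m)\})\cup\{X\}$ of $P$, so by Remark~\ref{rem_partition} that subset lies in $E(H/P)$ — which is exactly $(\mathcal{E}\setminus\{\pi(m)\})\cup\{X\}\in E(H/P)$, as required. (One must also handle blocks of $\pi(M)$ not in $\mathcal{E}$: for such $X$ the claim is that $(\mathcal{E}\setminus\{\pi(m)\})\cup\{X\}\in E(H/P)$, and the same transverse-swap argument applies since every transverse of that set is obtained from $e$ by replacing $m$ with a vertex of $X\cap M$ and is therefore an edge of $H$ by the module property.)

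Next, Assertion (2): let $\mathcal{M}$ be a module of $H/P$; I claim $\cup\mathcal{M}$ is a module of $H$. Take $e\in E(H)$ with $e\cap(\cup\mathcal{M})\neq\emptyset$ and $e\setminus(\cup\mathcal{M})\neq\emptyset$. If $|e/P|=1$ then $e$ is contained in a single block $X$; since $X$ is a module of $H$, either $X\subseteq\cup\mathcal{M}$ or $X\cap(\cup\mathcal{M})=\emptyset$ (because $\cup\mathcal{M}$ is a union of blocks), contradicting that $e$ meets both $\cup\mathcal{M}$ and its complement — so in fact $|e/P|\geq 2$ and Remark~\ref{rem_partition} applies. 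Then $e/P\in E(H/P)$, and $e/P$ meets $\mathcal{M}$ (since $e$ meets $\cup\mathcal{M}$) and also meets $P\setminus\mathcal{M}$ (since $e$ meets the complement of $\cup\mathcal{M}$). As $\mathcal{M}$ is a module of $H/P$, there is a block $X_0\in\mathcal{M}$ with $(e/P)\cap\mathcal{M}=\{X_0\}$ and $(e/P\setminus\{X_0\})\cup\{X\}\in E(H/P)$ for every $X\in\mathcal{M}$. Let $m$ be the unique vertex of $e\cap X_0$ (unique since $e$ is a transverse of $e/P$); then $e\cap(\cup\mathcal{M})=\{m\}$. Finally, for any $n\in\cup\mathcal{M}$, say $n\in X\in\mathcal{M}$, the set $(e\setminus\{m\})\cup\{n\}$ is a transverse of $(e/P\setminus\{X_0\})\cup\{X\}$ (if $X\neq X_0$) or of $e/P$ itself (if $X=X_0$), and in either case that subset of $P$ is an edge of $H/P$, so by Remark~\ref{rem_partition} the transverse $(e\setminus\{m\})\cup\{n\}$ is an edge of $H$. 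Hence $\cup\mathcal{M}$ is a module of $H$.

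The main obstacle is bookkeeping in Assertion (1): one has to be careful that the swap on the quotient level ("replace block $\pi(m)$ by block $X$") is realized on the hypergraph level by swapping $m$ for a suitable vertex of $X\cap M$, and that \emph{every} transverse of the swapped block-set arises this way (so that Remark~\ref{rem_partition}'s "if and only if" can be invoked). This hinges on the fact that a transverse of $e/P$ that picks vertices inside $M$ for blocks of $\pi(M)$ differs from an arbitrary transverse only inside $M$, where the module property of $M$ gives us full freedom; the blocks outside $\pi(M)$ are pinned down by the module property of $M$ forcing $|e\cap M|=1$, so there is no interference. Once this is set up cleanly, both parts are short.
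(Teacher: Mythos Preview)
Your proof is correct and takes a genuinely more direct route than the paper's. For Assertion~(1), the paper chooses a transverse $t$ of $P$ meeting each block of $M/P$ inside $M$, observes via Lemma~\ref{lem_(A2)} that $M\cap t$ is a module of $H[t]$, and then transports this along the isomorphism $\theta_t\colon H[t]\to H/P$ of Remark~\ref{rem_partition}. For Assertion~(2), the paper works the other way: it pulls $\mathcal{M}$ back to a module $\mu$ of $H[t]$, then inductively enlarges the ambient subhypergraph block by block, first absorbing the blocks of $\mathcal{M}$ via Lemma~\ref{lem_(A5)} (to grow $\mu$ into $\cup\mathcal{M}$) and then the remaining blocks via Lemma~\ref{lem_(A4)}. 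You instead verify Definition~\ref{defi_module_hyper} by hand in both directions, using only Remark~\ref{rem_partition} to pass between edges of $H$ and edges of $H/P$; this is shorter and avoids the (A4)/(A5) machinery entirely, at the cost of not illustrating how the result fits into the modular-covering framework.

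One small quibble: in your closing commentary you worry about showing that \emph{every} transverse of $(\mathcal{E}\setminus\{\pi(m)\})\cup\{X\}$ is an edge of $H$, and assert that each such transverse arises from $e$ by replacing $m$ with a vertex of $X\cap M$. That description is inaccurate (the representatives in the other blocks may also change), but it is also unnecessary: by Definition~\ref{quotient_hyper}, a subset $\mathcal{F}\subseteq P$ with $|\mathcal{F}|\ge 2$ lies in $E(H/P)$ as soon as \emph{some} edge $f\in E(H)$ satisfies $f/P=\mathcal{F}$, and your main argument has already exhibited one. Similarly, in your handling of the case $|e/P|=1$ in Assertion~(2), the dichotomy $X\subseteq\cup\mathcal{M}$ or $X\cap(\cup\mathcal{M})=\emptyset$ follows simply from $\cup\mathcal{M}$ being a union of blocks, not from $X$ being a module; the conclusion is of course unaffected.
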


\begin{proof}
For the first assertion, consider a module $M$ of $H$. 
Consider a transverse $t$ of $P$ such that 
\begin{equation}\label{E1_prop1_partition}
\text{for each $X\in M/P$, $t\cap X\subseteq M$.}
\end{equation}
By Lemma~\ref{lem_(A2)}, $M\cap t$ is a module of $H[t]$. 
Since $\theta_t$ is an isomorphism from $H[t]$ onto $H/P$(see Remark~\ref{rem_partition}), 
$$\text{$\theta_t(M\cap t)$, that is, $M/P$}$$ is a module of $H/P$. 

For the second assertion, consider a module $\mathcal{M}$ of $H/P$. 
Let $t$ be any transverse of $P$. 
Since $\theta_t$ is an isomorphism from $H[t]$ onto $H/P$, 
$(\theta_t)^{-1}(\mathcal{M})$ is a module of $H[t]$. 
Set $$\mu=(\theta_t)^{-1}(\mathcal{M}).$$
Denote the elements of $\mathcal{M}$ by $X_0,\ldots,X_m$. 
We verify by induction on $i\in\{0,\ldots,m\}$ that $\mu\cup(X_0\cup\ldots\cup X_i)$ is a module of 
$H[t\cup(X_0\cup\ldots\cup X_i)]$. 
It follows from Lemma~\ref{lem_(A5)} that $\mu\cup X_0$ is a module of 
$H[t\cup X_0]$. 
Given $0\leq i<m$, suppose that $\mu\cup(X_0\cup\ldots\cup X_i)$ is a module of 
$H[t\cup(X_0\cup\ldots\cup X_i)]$. 
Similarly, it follows from Lemma~\ref{lem_(A5)} that $\mu\cup(X_0\cup\ldots\cup X_{i+1})$ is a module of 
$H[t\cup(X_0\cup\ldots\cup X_{i+1})]$. 
By induction, we obtain that $\mu\cup(X_0\cup\ldots\cup X_m)$ is a module of 
$H[t\cup(X_0\cup\ldots\cup X_m)]$. 
Observe that $$\mu\cup(X_0\cup\ldots\cup X_m)=\cup\mathcal{M}.$$
Lastly, denote the elements of $P\setminus\mathcal{M}$ by $Y_0,\ldots,Y_n$. 
Using Lemma~\ref{lem_(A4)}, we show by induction on $0\leq j\leq n$ that 
$(\cup\mathcal{M})$ is a module of 
$H[t\cup(X_0\cup\ldots\cup X_m)\cup(Y_0\cup\ldots\cup Y_j)]$. 
Consequently, we obtain that $(\cup\mathcal{M})$ is a module of 
$H[t\cup(X_0\cup\ldots\cup X_m)\cup(Y_0\cup\ldots\cup Y_n)]$, that is, $H$. 
\end{proof}

The next proposition is similar to Proposition~\ref{prop1_partition}, but it is devoted to strong modules. 
It is the analogue of Proposition~\ref{prop_strong_quotient_tournaments} for hypergraphs. 

\begin{prop}\label{prop2_partition}
Given a modular partition $P$ of a hypergraph $H$, the following two assertions hold. 
\begin{enumerate}
\item If $M$ is a strong module of $H$, then $M/P$ is a strong module of $H/P$ (see Notation~\ref{W/P}). 
\item Suppose that all the elements of $P$ are strong modules of $H$. 
If $\mathcal{M}$ is a strong module of $H/P$, then $\cup\mathcal{M}$ is a strong module of $H$. 
\end{enumerate}
\end{prop}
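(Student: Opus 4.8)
The plan is to reduce both assertions to Proposition~\ref{prop1_partition}, which transports modules of $H$ to modules of $H/P$ and conversely, together with three elementary facts about the operator $\cdot/P$ of Notation~\ref{W/P}: if $\mathcal{Q}\subseteq P$ then $(\cup\mathcal{Q})/P=\mathcal{Q}$; if $W\subseteq W'$ then $W/P\subseteq W'/P$; and always $W\subseteq\cup(W/P)$. The overall scheme follows that of Proposition~\ref{prop_strong_quotient_tournaments}.

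For the first assertion, I would start from a strong module $M$ of $H$. By Proposition~\ref{prop1_partition}(1), $M/P\in\mathscr{M}(H/P)$, so it remains to check that $M/P$ is strong. Given $\mathcal{N}\in\mathscr{M}(H/P)$ with $(M/P)\cap\mathcal{N}\neq\emptyset$, Proposition~\ref{prop1_partition}(2) makes $\cup\mathcal{N}$ a module of $H$. Choosing a block $X\in(M/P)\cap\mathcal{N}$, one has $\emptyset\neq X\cap M\subseteq M\cap(\cup\mathcal{N})$, so strongness of $M$ in $H$ forces $M\subseteq\cup\mathcal{N}$ or $\cup\mathcal{N}\subseteq M$. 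Applying $\cdot/P$ and using $(\cup\mathcal{N})/P=\mathcal{N}$ yields $M/P\subseteq\mathcal{N}$ or $\mathcal{N}\subseteq M/P$, which is what is needed.

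For the second assertion, I would start from a strong module $\mathcal{M}$ of $H/P$, under the hypothesis that every block of $P$ is a strong module of $H$. By Proposition~\ref{prop1_partition}(2), $\cup\mathcal{M}$ is a module of $H$, and again only strongness is in question. Take $N\in\mathscr{M}(H)$ with $(\cup\mathcal{M})\cap N\neq\emptyset$; by Proposition~\ref{prop1_partition}(1), $N/P$ is a module of $H/P$, and a vertex lying in $(\cup\mathcal{M})\cap N$ is contained in a block belonging to both $\mathcal{M}$ and $N/P$, so $\mathcal{M}\cap(N/P)\neq\emptyset$. Strongness of $\mathcal{M}$ in $H/P$ then gives $\mathcal{M}\subseteq N/P$ or $N/P\subseteq\mathcal{M}$. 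If $N/P\subseteq\mathcal{M}$, then $N\subseteq\cup(N/P)\subseteq\cup\mathcal{M}$. If instead $\mathcal{M}\subseteq N/P$, then for every $X\in\mathcal{M}$ one has $X\cap N\neq\emptyset$, and here I would invoke the hypothesis that $X$ is a \emph{strong} module of $H$: it gives $X\subseteq N$ or $N\subseteq X$ for each such $X$. If $N\subseteq X$ for some $X\in\mathcal{M}$, then $N/P=\{X\}$, whence $\mathcal{M}=\{X\}$ and $N\subseteq X=\cup\mathcal{M}$; otherwise $X\subseteq N$ for every $X\in\mathcal{M}$, so $\cup\mathcal{M}\subseteq N$. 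In every case $\cup\mathcal{M}$ and $N$ are comparable, so $\cup\mathcal{M}$ is strong.

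The only delicate point is the last case analysis in the second assertion. One must not overlook the possibility that $N$ is contained in a single block of $\mathcal{M}$, and one must notice that the hypothesis ``all elements of $P$ are strong modules of $H$'' is exactly what upgrades ``$X\cap N\neq\emptyset$'' into ``$X\subseteq N$ or $N\subseteq X$''; without it the argument collapses. Everything else is routine bookkeeping with $\cdot/P$ and the transport results of Proposition~\ref{prop1_partition}.
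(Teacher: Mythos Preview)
Your proof is correct and follows essentially the same route as the paper: both assertions are reduced to Proposition~\ref{prop1_partition} together with the strongness hypothesis on the blocks of $P$. The only cosmetic difference is in assertion~2: the paper disposes of the cases $\mathcal{M}=\emptyset$ and $|\mathcal{M}|=1$ separately before treating $|\mathcal{M}|\geq 2$, whereas you absorb the singleton case into your final dichotomy ``$N\subseteq X$ for some $X\in\mathcal{M}$'' versus ``$X\subseteq N$ for all $X\in\mathcal{M}$''; both organizations are fine.
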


\begin{proof}
For the first assertion, consider a strong module $M$ of $H$. 
By the first assertion of Proposition~\ref{prop1_partition}, $M/P$ is a module of $H/P$. 
To show that $M/P$ is strong, consider a module $\mathcal{M}$ of $H/P$ such that 
$(M/P)\cap\mathcal{M}\neq\emptyset$. 
By the second assertion of Proposition~\ref{prop1_partition}, $\cup\mathcal{M}$ is a module of $H$. 
Furthermore, since $(M/P)\cap\mathcal{M}\neq\emptyset$, there exists $X\in(M/P)\cap\mathcal{M}$. 
We get $X\cap M\neq\emptyset$ and $X\subseteq\cup\mathcal{M}$. 
Therefore, $M\cap(\cup\mathcal{M})\neq\emptyset$. 
Since $M$ is a strong module of $H$, we obtain $\cup\mathcal{M}\subseteq M$ or 
$M\subseteq\cup\mathcal{M}$. 
In the first instance, we get $\mathcal{M}\subseteq M/P$, and, in the second one, we get 
$M/P\subseteq\mathcal{M}$. 

For the second assertion, suppose that all the elements of $P$ are strong modules of $H$. 
Consider a strong module $\mathcal{M}$ of $H/P$. 
To begin, we make two observations. 
First, if $\mathcal{M}=\emptyset$, then $\cup\mathcal{M}=\emptyset$, and hence $\cup\mathcal{M}$ is a strong module of $H$. 
Second, if $|\mathcal{M}|=1$, then $\cup\mathcal{M}\in P$, and hence $\cup\mathcal{M}$ is a strong module of $H$ because all the elements of $P$ are. 
Now, suppose that 
\begin{equation}\label{E1_prop2_partition}
|\mathcal{M}|\geq 2.
\end{equation}
By the second assertion of Proposition~\ref{prop1_partition}, $\cup\mathcal{M}$ is a module of $H$. 
To show that $\cup\mathcal{M}$ is strong, consider a module $M$ of $H$ such that 
$M\cap(\cup\mathcal{M})\neq\emptyset$. 
Let $x\in M\cap(\cup\mathcal{M})$. 
Denote by $X$ the unique element of $P$ containing $x$. 
We get $X\in (M/P)\cap\mathcal{M}$. 
Since $\mathcal{M}$ is a strong module of $H/P$, we obtain $M/P\subseteq\mathcal{M}$ or 
$\mathcal{M}\subseteq M/P$. 
In the first instance, we obtain $\cup(M/P)\subseteq\cup\mathcal{M}$, so we have 
$M\subseteq\cup(M/P)\subseteq\cup\mathcal{M}$. 
Lastly, suppose $\mathcal{M}\subseteq M/P$. 
It follows from \eqref{E1_prop2_partition} that $$|M/P|\geq 2.$$
Let $Y\in M/P$. 
We have $Y\cap M\neq\emptyset$. 
Since $|M/P|\geq 2$, we have $M\setminus Y\neq\emptyset$. 
Since $Y$ is a strong module of $P$, we obtain $Y\subseteq M$. 
It follows that $M=\cup(M/P)$. 
Since $\mathcal{M}\subseteq M/P$, we obtain $\cup\mathcal{M}\subseteq \cup(M/P)$, and hence 
$\cup\mathcal{M}\subseteq M$. 
\end{proof}

\begin{rem}\label{rem_connected}
We use the characterization of disconnected hypergaphs in terms of a quotient 
(see Lemma~\ref{lem_connected} below)  to prove the analogue of 
Theorem~\ref{tournament_ThB_Gallai} (see Theorem~\ref{Th-bis_Gallai} below). 
Recall that a hypergraph $H$ is {\em connected} if for distinct $v,w\in V(H)$, there exist a sequence $(e_0,\ldots,e_n)$ of edges of $H$, where $n\geq 0$, satisfying $v\in e_0$, $w\in e_n$, and (when $n\geq 1$) $e_i\cap e_{i+1}\neq\emptyset$ for every $0\leq i\leq n-1$. 
Given a hypergraph $H$, a maximal connected subhypergraph of $H$ is called a {\em component} of $H$. 

\begin{nota}\label{set_of_comp}
Given a hypergraph $H$, the set of the components of $H$ is denoted by $\mathfrak{C}(H)$. 
\end{nota}

Let $H$ be a hypergraph. 
For each component $C$ of $H$, $V(C)$ is a module of $H$. 
Thus, $\{V(C):C\in\mathfrak{C}(H)\}$ is a modular partition of $H$. 
Furthermore, for each component $C$ of $H$, $V(C)$ is a strong module of $H$. 
We conclude the remark with the following result. 

\begin{lem}\label{lem_connected}
Given a hypergraph $H$ with $v(H)\geq 2$, the following assertions are equivalent
\begin{enumerate}
\item $H$ is disconnected;
\item $H$ admits a modular bipartition $P$ such that $|P|\geq 2$ and $H/P$ is empty;
\item $\Pi(H)=\{V(C):C\in\mathfrak{C}(H)\}$, $|\Pi(H)|\geq 2$, and $H/\Pi(H)$ is empty.
\end{enumerate}
\end{lem}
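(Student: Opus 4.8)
\textbf{Proof plan for Lemma~\ref{lem_connected}.}
The plan is to prove the cycle of implications $(1)\Rightarrow(3)\Rightarrow(2)\Rightarrow(1)$, of which the implication $(2)\Rightarrow(1)$ is essentially immediate and $(3)\Rightarrow(2)$ is trivial since $(3)$ is a special case of $(2)$ once we observe that a partition into the vertex sets of the components is always a bipartition or finer only when there are at least two components, which $|\Pi(H)|\ge 2$ guarantees. So the substance lies in $(1)\Rightarrow(3)$ and in the base case $(2)\Rightarrow(1)$. For $(2)\Rightarrow(1)$: suppose $P$ is a modular partition with $|P|\ge 2$ and $H/P$ empty. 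Then by Remark~\ref{rem_partition} an edge $e$ with $|e/P|\ge 2$ would force $e/P\in E(H/P)$, contradicting emptiness; hence every edge of $H$ lies inside a single block of $P$. Since $|P|\ge 2$, pick $v$ and $w$ in different blocks: no sequence of edges can connect them because each edge stays in one block and distinct blocks are disjoint, so $H$ is disconnected.

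For $(1)\Rightarrow(3)$, assume $H$ is disconnected, so $|\mathfrak{C}(H)|\ge 2$. As noted in the remark preceding the lemma, each $V(C)$ for $C\in\mathfrak{C}(H)$ is a strong module of $H$, and $\{V(C):C\in\mathfrak{C}(H)\}$ is a modular partition. The key step is to show these are exactly the \emph{maximal} proper strong modules, i.e. $\Pi(H)=\{V(C):C\in\mathfrak{C}(H)\}$. Maximality among proper strong modules: a strong module $S$ with $V(C)\subsetneq S$ for some component $C$ must, since $S$ is a module and $S$ meets $V(C)$ while $S\setminus V(C)\neq\emptyset$, actually contain every component it meets (a module that meets a component but is not contained in it cannot exist, because crossing edges between a component and its complement do not exist, so the module condition degenerates — more carefully, $V(C)$ is a strong module, so $S\cap V(C)\neq\emptyset$ forces $V(C)\subseteq S$ or $S\subseteq V(C)$; here $V(C)\subseteq S$). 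Iterating, $S$ is a union of component vertex sets; a proper such union consisting of at least two components is itself a module but then it is contained properly in $V(H)$, and one checks it is not strong unless it equals a single $V(C)$ — indeed two components $C_1,C_2$ give $V(C_1)\cup V(C_2)$ and $V(C_1)$ both strong with nonempty intersection but neither containing the other would be a contradiction, so unions of $\ge 2$ components fail to be strong once a third configuration is available, and when exactly two components exist $V(C_1)\cup V(C_2)=V(H)$ is not proper. Hence the maximal proper strong modules are precisely the $V(C)$. Finally $|\Pi(H)|=|\mathfrak{C}(H)|\ge 2$, and $H/\Pi(H)$ is empty: any edge $e$ lies within one component $C$ (edges never cross components), so $e/\Pi(H)=\{V(C)\}$ has size $1$, hence contributes no edge to the quotient by Definition~\ref{quotient_hyper}.

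I expect the main obstacle to be the maximality argument inside $(1)\Rightarrow(3)$: one must rule out proper strong modules that are unions of two or more component vertex sets. The clean way is to invoke that $\mathscr{M}(H)$ is a partitive family (Proposition~\ref{prop_Delta}) together with the strongness definition: if $S=\bigcup_{C\in\mathcal{C}'}V(C)$ with $2\le|\mathcal{C}'|<|\mathfrak{C}(H)|$, pick $C_1,C_2\in\mathcal{C}'$ and $C_3\notin\mathcal{C}'$; then $N:=V(C_1)\cup V(C_3)$ is a module (union of components) with $S\cap N\supseteq V(C_1)\neq\emptyset$, yet $S\not\subseteq N$ (as $C_2\subseteq S\setminus N$) and $N\not\subseteq S$ (as $C_3\subseteq N\setminus S$), contradicting strongness of $S$. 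This handles $|\mathfrak{C}(H)|\ge 3$; the case $|\mathfrak{C}(H)|=2$ is direct since then the only candidate proper union is $V(H)$ itself, which is not proper. All remaining verifications are routine applications of Definition~\ref{quotient_hyper} and Remark~\ref{rem_partition}.
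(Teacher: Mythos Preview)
The paper states this lemma inside Remark~\ref{rem_connected} without proof, so there is no argument in the paper to compare against directly; your cycle $(1)\Rightarrow(3)\Rightarrow(2)\Rightarrow(1)$ is a natural and correct way to fill in the details, and in particular your maximality argument for $(1)\Rightarrow(3)$ via the overlapping module $N=V(C_1)\cup V(C_3)$ is exactly the right idea.

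One small wrinkle: in $(3)\Rightarrow(2)$ you say that $(3)$ is a special case of $(2)$, but a \emph{bipartition} means a partition into exactly two blocks, whereas $\Pi(H)$ may have three or more elements. The fix is immediate: pick any $C\in\mathfrak{C}(H)$ and set $P=\{V(C),\,V(H)\setminus V(C)\}$; the complement $V(H)\setminus V(C)$ is a union of component vertex sets and hence vacuously a module (no edge meets two distinct components), and $H/P$ is empty for the same reason. With that one-line adjustment your proof is complete. (The redundant clause ``$|P|\geq 2$'' in assertion~(2) suggests the authors may themselves have been thinking of an arbitrary modular partition rather than a strict bipartition, in which case your original reading would already suffice.)
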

\end{rem}

Let $H$ be a hypergraph such that $v(H)\geq 2$. 
Because of the maximality of the elements of $\Pi(H)$ (see Notation~\ref{nota_Pi_hyper}), it follows from the second assertion of Proposition~\ref{prop2_partition} that all the strong modules of $H/\Pi(H)$ are trivial. 
To prove Theorem~\ref{Thbis_Gallai}, we establish the following result, which is the 
analogue of Theorem~\ref{tournament_ThB_Gallai}. 

\begin{thm}\label{Th-bis_Gallai}
Given a hypergraph $H$, all the strong modules of $H$ are trivial if and only if $H$ is an empty hypergraph, a prime hypergraph or a complete graph. 
\end{thm}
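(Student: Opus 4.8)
The plan is to prove both implications of Theorem~\ref{Th-bis_Gallai}. The easy direction is ``if'': if $H$ is empty, then every subset of $V(H)$ is a module, so the only \emph{strong} modules are the trivial ones (any nontrivial subset overlaps with another incomparable module). If $E(H)=\binom{\Pi(H)}{2}$, i.e.\ $H$ is a complete graph $K_n$ with $n\geq 2$, then again every subset is a module (the module condition is vacuous for $2$-element edges meeting a set $M$ in one point and its complement in one point, since then $(e\setminus\{m\})\cup\{n\}$ is again an edge of $K_n$), so only trivial strong modules exist. If $H$ is prime, all modules, hence all strong modules, are trivial by definition. So the substance is the converse.

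For the ``only if'' direction, suppose all strong modules of $H$ are trivial; I want to conclude $H$ is empty, prime, or a complete graph. If $v(H)\leq 2$ this is immediate, so assume $v(H)\geq 3$. If $H$ is indecomposable, then $H$ is prime and we are done. So assume $H$ is decomposable, i.e.\ it has a nontrivial module; I must show $H$ is then empty or a complete graph. First handle the disconnected case: if $H$ is disconnected, Lemma~\ref{lem_connected} gives $\Pi(H)=\{V(C):C\in\mathfrak{C}(H)\}$ with $|\Pi(H)|\geq 2$; but the $V(C)$ are strong modules of $H$, and since all strong modules are trivial, each $V(C)$ is a singleton, so $H$ has no edges and is empty. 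Now assume $H$ is connected and decomposable. Consider $\Pi(H)$, the maximal proper strong modules; by hypothesis every element of $\Pi(H)$ is a singleton, so $\Pi(H)$ is the partition of $V(H)$ into singletons, and the natural isomorphism identifies $H/\Pi(H)$ with $H$ itself. By the paragraph preceding the theorem (consequence of the second assertion of Proposition~\ref{prop2_partition} together with maximality of the elements of $\Pi(H)$), all strong modules of $H/\Pi(H)$ are trivial --- which is consistent but gives nothing new. The key point instead is to use that $H$ is decomposable together with the structure of $\mathscr{M}(H)$ as a partitive family (Proposition~\ref{prop_Delta}).

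Here is the mechanism I expect to be the crux. Take a nontrivial module $M$ of $H$, so $2\leq |M|<v(H)=v$, and moreover we may take $M$ \emph{maximal} among proper modules; I claim such a maximal proper module is automatically strong, unless the union/symmetric-difference axioms of the partitive family force a contradiction. Concretely: if $M$ and $N$ are proper modules that overlap ($M\cap N\neq\emptyset$, $M\setminus N\neq\emptyset$, $N\setminus M\neq\emptyset$), then by the partitive axioms $M\cup N$, $M\cap N$, and $(M\setminus N)\cup(N\setminus M)$ are all modules; iterating, the modules overlapping a given maximal proper module $M$ build up larger and larger modules, and since $M$ is maximal proper, one of these must be $V(H)$. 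Analyzing this ``overlap class'' --- a standard argument for partitive/weakly-partitive families --- one shows that $V(H)$ decomposes as a modular partition $P$ into blocks each of which is a strong module, with $H/P$ either empty or complete (this is exactly the dichotomy that the quotient of a partitive family satisfies; it is the analogue, for partitive families, of the linear-order-or-prime dichotomy for weakly partitive families). Since the blocks of $P$ are strong modules of $H$, they are singletons, so $P$ is the partition into singletons and $H\cong H/P$ is empty or complete. The main obstacle is thus establishing this ``overlap class'' dichotomy for the partitive family $\mathscr{M}(H)$ --- showing that a connected decomposable $H$ whose maximal proper strong modules are all singletons must have $H$ itself be a complete graph --- which requires carefully tracking how the three partitive closure operations generate modules and pinning down that the induced quotient on the overlap-saturation of a maximal proper module is forced to be $K_n$; I would model this on the corresponding step in the proof of Theorem~\ref{tournament_ThB_Gallai} (the tournament case, where ``complete graph'' is replaced by ``linear order''), adapting it to the symmetric setting where overlapping modules also yield the symmetric-difference module.
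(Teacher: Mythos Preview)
Your proposal correctly handles the easy direction and the disconnected case, and you correctly identify that the heart of the matter is the connected, decomposable case with all strong modules trivial. But the argument you give for that case is not a proof: you assert that an ``overlap class'' analysis produces a modular partition $P$ into strong modules with $H/P$ empty or complete, and then conclude. Since by hypothesis every strong module is trivial, $P$ would be the singleton partition and $H/P\cong H$, so the claim ``$H/P$ is empty or complete'' is exactly the conclusion of the theorem. You have restated what is to be proved and deferred it to a ``standard argument'' that you do not carry out. The sentence ``I would model this on the corresponding step in the proof of Theorem~\ref{tournament_ThB_Gallai}'' confirms that the crux is missing.

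The paper's proof supplies exactly this missing piece, and by a short device rather than an overlap-class analysis. First, from a \emph{maximal} nontrivial module $M$ (which cannot be strong, hence overlaps some module $N$), Lemma~\ref{lem_union} and maximality force $M\cup N=V(H)$, and Lemma~\ref{lem_moins} gives that $V(H)\setminus M=N\setminus M$ is a module; so $\{M,V(H)\setminus M\}$ is a modular \emph{bipartition}. The two-vertex quotient is either empty or $K_2$. If empty, $H$ is disconnected and your component argument finishes. If the quotient is $K_2$, the paper introduces the hypergraph $H^c$ obtained from $H$ by complementing only the $2$-element edges (equation~\eqref{E0_Th-bis_Gallai}); one checks that $H$ and $H^c$ have the same modules, hence the same strong modules, and the same bipartition---but now $H^c/\{M,V(H)\setminus M\}$ is empty, so the first case applied to $H^c$ gives $E(H^c)=\emptyset$, i.e.\ $E(H)=\binom{V(H)}{2}$. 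This complementation trick is the idea your outline lacks; it replaces the unspecified partitive-family dichotomy by a direct reduction to the disconnected case.
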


\begin{proof}
Clearly, if $H$ is an empty hypergraph, a prime hypergraph or a complete graph, then 
all the strong modules of $H$ are trivial. 

To demonstrate the converse, we prove the following. 
Given a hypergraph $H$, if all the strong modules of $H$ are trivial, and $H$ is decomposable, then 
$H$ is an empty hypergraph or a complete graph. 

To begin, we show that $H$ admits a modular bipartition. 
Since $H$ is decomposable, we can consider a maximal nontrivial module $M$ of $H$ under inclusion. 
Since $M$ is a nontrivial module of $H$, $M$ is not strong. 
Consequently, there exists a module $N$ of $H$ such that $M\cap N\neq\emptyset$, $M\setminus N\neq\emptyset$ and $N\setminus M\neq\emptyset$. 
Since $M\cap N\neq\emptyset$, $M\cup N$ is a module of $H$ by Lemma~\ref{lem_union}. 
Clearly, $M\subsetneq M\cup N$ because $N\setminus M\neq\emptyset$. 
Since $M$ is a maximal nontrivial module of $H$, $M\cup N$ is a trivial module of $H$, so 
$M\cup N=V(H)$. 
Since $M\setminus N\neq\emptyset$, $N\setminus M$ is a module of $H$ by Lemma~\ref{lem_moins}. 
But, $N\setminus M=V(H)\setminus M$ because $M\cup N=V(H)$. 
It follows that $\{M,V(H)\setminus M\}$ is a modular bipartition of $H$. 

We have $H/\{M,V(H)\setminus M\}$ is an empty hypergraph or a complete graph. 
We distinguish the following two cases. 
\begin{enumerate}
\item Suppose that $H/\{M,V(H)\setminus M\}$ is an empty hypergraph. 
We prove that $H$ is an empty hypergraph. 
By Lemma~\ref{lem_connected}, $H$ is disconnected. 
Let $C\in\mathfrak{C}(H)$. 
As recalled in Remark~\ref{rem_connected}, $V(C)$ is a strong module of $H$. 
By hypothesis, $V(C)$ is trivial. 
Since $H$ is disconnected, $V(C)\subsetneq V(H)$. 
It follows that $v(C)=1$. 
Therefore, $H$ is isomorphic to $H/\{V(C):C\in\mathfrak{C}(H)\}$. 
It follows from Lemma~\ref{lem_connected} that $H$ is empty. 
\item Suppose that $H/\{M,V(H)\setminus M\}$ is a complete graph. 
We prove that $H$ is a complete graph. 
Consider the graph $H^c$ defined on $V(H)$ by 
\begin{equation}\label{E0_Th-bis_Gallai}
E(H^c)=(E(H)\setminus\binom{V(H)}{2})\cup(\binom{V(H)}{2}\setminus E(H)).
\end{equation}
It is easy to verify that $H$ and $H^c$ share the same modules. 
Therefore, they share the same strong modules. 
Consequently, all the strong modules of $H^c$ are trivial, $H^c$ is decomposable, and 
$\{M,V(H)\setminus M\}$ is a modular bipartition of $H$. 
Since $H/\{M,V(H)\setminus M\}$ is a complete graph, $H^c/\{M,V(H)\setminus M\}$ is empty. 
It follows from the first case that $H^c$ is empty. 
Hence $E(H^c)=\emptyset$, and it follows from \eqref{E0_Th-bis_Gallai} that $E(H)=\binom{V(H)}{2}$. \qedhere
\end{enumerate}
\end{proof}

\begin{proof}[Proof of Theorem~\ref{Thbis_Gallai}]
For a contradiction, suppose that $H/\Pi(H)$ admits a nontrivial strong module $\mathcal{S}$. 
By the second assertion of Proposition~\ref{prop2_partition}, $\cup\mathcal{S}$ is a strong module of $H$. 
Given $X\in\mathcal{S}$, we obtain $X\subsetneq\cup\mathcal{S}\subsetneq V(H)$, which contradicts the maximality of $X$. 
Consequently, all the strong modules of $H/\Pi(H)$ are trivial. 
To conclude, it suffices to apply Theorem~\ref{Th-bis_Gallai} to $H/\Pi(H)$. 
\end{proof}

\begin{defi}\label{tree_hypergraph}
Let $H$ be a hypergraph. 
As for tournaments (see Definition~\ref{tree_tournaments}), the set of the nonempty strong modules of $H$ is denoted by $\mathscr{D}(H)$. Clearly, $\mathscr{D}(H)$ endowed with inclusion is a tree called the {\em modular decomposition tree} of $H$. 
For convenience, set 
$$\mathscr{D}_{\geq 2}(H)=\{X\in\mathscr{D}(H):|X|\geq 2\}.$$

Moreover, we associate with each $X\in\mathscr{D}_{\geq 2}(H)$, the label 
$\varepsilon_H(X)$ defined as follows
\begin{equation*}
\varepsilon_H(X)=\ 
\begin{cases}
\text{$\triangle$ if $H[X]/\Pi(H[X])$ is prime},\\
\text{$\medcircle$ if $H[X]/\Pi(H[X])$ is empty}\\
\text{or}\\
\text{$\bullet$ if $H[X]/\Pi(H[X])$ is a complete graph.}
\end{cases}
\end{equation*}
\end{defi}

To conclude, we prove the analogue of Proposition~\ref{prop_strong_tournaments} for hypergraphs.

\begin{prop}\label{prop_strong_hypergraphs}
Given a hypergraph $H$, consider a strong module $M$ of $H$. 
For every $N\subseteq M$, the following two assertions are equivalent
\begin{enumerate}
\item $N$ is a strong module of $H$;
\item $N$ is a strong module of $H[M]$.
\end{enumerate}
\end{prop}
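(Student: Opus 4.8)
\textbf{Proof proposal for Proposition~\ref{prop_strong_hypergraphs}.}

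The plan is to prove the two implications separately, using Proposition~\ref{properties_modules} (the modular covering axioms) as the main engine, since that is exactly the tool that lets us transfer modules between a hypergraph and its induced subhypergraphs. Throughout, write $\mathfrak{M}(W)=\mathscr{M}(H[W])$ for $W\subseteq V(H)$; this is a modular covering of $V(H)$.

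First I would do the easy direction, $(1)\Rightarrow(2)$. Assume $N\subseteq M$ is a strong module of $H$. By Assertion~(A3) applied with $W=M$ and $W'=V(H)$ (legitimate since $M\in\mathfrak{M}(V(H))$), the modules of $H[M]$ are precisely the modules of $H$ contained in $M$; in particular $N\in\mathscr{M}(H[M])$. To see $N$ is strong in $H[M]$, take any module $N'$ of $H[M]$ with $N\cap N'\neq\emptyset$. Again by (A3), $N'$ is a module of $H$, so strongness of $N$ in $H$ gives $N\subseteq N'$ or $N'\subseteq N$. Hence $N$ is strong in $H[M]$.

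The substantive direction is $(2)\Rightarrow(1)$. Assume $N\subseteq M$ is a strong module of $H[M]$. By (A3) again, $N$ is a module of $H$; we must show it is strong in $H$. Let $L$ be a module of $H$ with $L\cap N\neq\emptyset$. Apply (A2) with $W=M\subseteq W'=V(H)$: $L\cap M$ is a module of $H[M]$. Since $L\cap N\neq\emptyset$ and $N\subseteq M$, we have $(L\cap M)\cap N\neq\emptyset$, so strongness of $N$ in $H[M]$ yields either $N\subseteq L\cap M\subseteq L$ (done), or $L\cap M\subseteq N$. In the latter case I must still show $L\subseteq N$, i.e.\ that $L$ cannot ``stick out'' of $M$. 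Here is where I would use the remaining covering axioms together with strongness of $M$ in $H$: since $L\cap M\neq\emptyset$ and $M$ is a strong module of $H$, we get $M\subseteq L$ or $L\subseteq M$. If $L\subseteq M$ then $L=L\cap M\subseteq N$ and we are done. If $M\subseteq L$, then $L\cap M=M$, so the case hypothesis $L\cap M\subseteq N$ forces $M\subseteq N$, hence $N=M$; but then $L\cap N=L\cap M=M\neq\emptyset$ and $M=N\subseteq L$, so again $N\subseteq L$. In every case $N\subseteq L$ or $L\subseteq N$, so $N$ is strong in $H$.

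The main obstacle I anticipate is precisely the last case analysis in $(2)\Rightarrow(1)$: a module $L$ of $H$ meeting $N$ but not contained in $M$ could a priori have $L\cap M$ a proper nonempty submodule of $N$, and one needs strongness of $M$ in $H$ (not merely that $M$ is a module) to rule this out cleanly — this is exactly why the hypothesis ``$M$ strong'' is essential and cannot be weakened to ``$M$ a module.'' Once that observation is in place the argument is short; the proof in the paper may instead route through (A4)/(A5) to build up $N\cup L$ as a module on $M\cup L$, but the strongness-of-$M$ shortcut above seems to me the most economical.
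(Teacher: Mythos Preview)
Your proof is correct and follows essentially the same approach as the paper: both directions rest on Lemmas~\ref{lem_(A2)} and~\ref{lem_(A3)} (axioms (A2) and (A3)) together with the strongness of $M$ in $H$. The only cosmetic difference is the order of the case split in $(2)\Rightarrow(1)$: the paper first invokes strongness of $M$ to get $M\subseteq L$ or $L\subseteq M$, and only in the second case restricts $L$ to $H[M]$ via (A2) and applies strongness of $N$ there; you instead restrict first and then invoke strongness of $M$ to clean up the case $L\cap M\subseteq N$. Your closing speculation is off---the paper does not route through (A4)/(A5) here.
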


\begin{proof}
Let $N$ be a subset of $M$. 
To begin, suppose that $N$ is a strong module of $H$. 
Since $N$ is a module of $H$, $N$ is a module of $H[M]$ by Lemma~\ref{lem_(A2)}. 
To show that $N$ is a strong module of $H[M]$, consider a module $X$ of $H[M]$ such that 
$N\cap X\neq\emptyset$. 
Since $M$ is a module of $H$, $X$ is a module of $H$ by Lemma~\ref{lem_(A3)}. 
Since $N$ is a strong module of $H$, we obtain $N\subseteq X$ or $X\subseteq N$. 

Conversely, suppose that $N$ is a strong module of $H[M]$. 
Since $M$ is a module of $H$, $N$ is a module of $H$ by Lemma~\ref{lem_(A3)}. 
To show that $N$ is a strong module of $H$, consider a module $X$ of $H$ such that 
$N\cap X\neq\emptyset$. 
We have $M\cap X\neq\emptyset$ because $N\subseteq M$. 
Since $M$ is a strong module of $H$, we obtain $M\subseteq X$ or $X\subseteq M$. 
In the first instance, we get $N\subseteq M\subseteq X$. 
Hence, suppose that $X\subseteq M$. 
By Lemma~\ref{lem_(A2)}, $X$ is a module of $H[M]$. 
Since $N$ is a strong module of $H[M]$ and $N\cap X\neq\emptyset$, 
we obtain $N\subseteq X$ or $X\subseteq N$. 
\end{proof}

\section{Realization and decomposability}\label{real_dec}

Consider a realizable 3-uniform hypergraph. 
Let $T$ be a realization of $H$. A module of $T$ is clearly a module of $H$, but the converse is false. 
Nevertheless, we have the following result (see Proposition~\ref{P1_real_dec}). 
We need the following notation. 

\begin{nota}\label{tree_tilde}
Let $H$ be a 3-uniform hypergraph. 
For $W\subseteq V(H)$ such that $W\neq\emptyset$, $\widetilde{W}^H$ denotes the intersection of the strong modules of $H$ containing $W$. Note that $\widetilde{W}^H$ is the smallest strong module of $H$ containing $W$. 
\end{nota}

\begin{prop}\label{P1_real_dec}
Let $H$ be a realizable 3-uniform hypergraph. 
Consider a realization $T$ of $H$. 
Let $M$ be a module of $H$. 
Suppose that $M$ is not a module of $T$, and set 
$$\righthalfcap_T M=\{v\in V(H)\setminus M:\text{$M$ is not a module of $T[M\cup\{v\}]$}\}.$$
The following four assertions hold
\begin{enumerate}
\item $M\cup(\righthalfcap_T M)$ is a module of $T$;
\item $M$ is not a strong module of $H$;
\item $M\cup(\righthalfcap_T M)\subseteq\widetilde{M}^H$;
\item $\varepsilon_H(\widetilde{M}^H)=\medcircle$ and 
$|\Pi(H[\widetilde{M}^H])|\geq 3$. 
\end{enumerate}
\end{prop}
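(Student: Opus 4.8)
The starting point is the hypothesis that $M$ is a module of $H$ but not of $T$. Since a module of $H$ that meets an edge in more than one point is impossible (Definition~\ref{defi_module_hyper} forces $|e\cap M|=1$ whenever $e$ straddles $M$), the failure of $M$ to be a module of $T$ must come from a $3$-cycle of $T$ whose vertex set straddles $M$ in exactly one vertex, together with a ``third vertex'' $v$ for which the orientations split. The first task is therefore to understand $\righthalfcap_T M$ concretely: for $v\in V(H)\setminus M$, $M$ fails to be a module of $T[M\cup\{v\}]$ precisely when there exist $x,y\in M$ with $x\to v\to y$ in $T$. The key combinatorial observation I would extract is that for such a $v$, and for \emph{any} $z\in M$, the triple $\{x,v,z\}$ or $\{v,y,z\}$ is a $3$-cycle of $T$, hence an edge of $H$ straddling $M$; applying the module property of $M$ in $H$ then pins down exactly which triples $\{v\}\cup\{$two vertices of $M\}$ lie in $E(H)$. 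This should show that, as far as $H$ is concerned, a vertex of $\righthalfcap_T M$ ``sees'' $M$ the way an external vertex of a module does, which is what makes $M\cup(\righthalfcap_T M)$ behave well.

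For assertion (1), the plan is to show directly that $N:=M\cup(\righthalfcap_T M)$ is a module of $T$, i.e.\ that no vertex $w\notin N$ lies ``between'' two vertices of $N$ in $T$. Suppose $a\to w\to b$ with $a,b\in N$ and $w\notin N$. If both $a,b\in M$ then $w\in\righthalfcap_T M$ by definition, contradiction. The remaining cases have at least one of $a,b$ in $\righthalfcap_T M$; here I would use the structural description of how $\righthalfcap_T M$-vertices and $M$-vertices are oriented relative to each other (obtained above from the $H$-module property applied to the $3$-cycles forced by $w$) to produce a $3$-cycle straddling $M$ that is incompatible with $M\in\mathscr M(H)$ — or, alternatively, to show $w$ itself must have been in $\righthalfcap_T M$. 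This is the step I expect to be the main obstacle: one has to juggle the interplay between the tournament structure of $T$ and the hypergraph-module condition on $H$, and carefully enumerate the orientation patterns among $\{a,b,w\}$ and a suitable witness in $M$. A clean way to organize it is to first prove the ``uniform behaviour'' lemma — every $v\in\righthalfcap_T M$ dominates a fixed subset of $M$ and is dominated by its complement in $M$, and all vertices of $\righthalfcap_T M$ induce the same splitting — and then (1) becomes a short case check.

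Assertion (2) is immediate from (1): $M\subsetneq M\cup(\righthalfcap_T M)$ since $M$ is not a module of $T$ but $M\cup(\righthalfcap_T M)$ is, so $\righthalfcap_T M\neq\emptyset$; and $M\cup(\righthalfcap_T M)$ is a module of $T$, hence a module of $H$, and it properly contains $M$ while (being a module of $T$) it is a proper subset of $V(H)$ unless $T$ — and then $H$ — is such that this is all of $V(H)$, which one rules out by noting $v(H)\ge 3$ and $M$ nontrivial. More to the point, $M\cup(\righthalfcap_T M)$ is a module of $H$ overlapping $M$ with both differences nonempty only if... — here instead I would argue: the set $N=M\cup(\righthalfcap_T M)$ is a module of $H$ with $M\subsetneq N$, but if $M$ were strong then every module of $H$ meeting $M$ would contain $M$ or be contained in it, which is consistent; the real obstruction is that $M$ itself, viewed inside $N$, is not a module of $T[N]$ yet $N$ is, which will let me exhibit an overlapping module of $H$ witnessing non-strongness — concretely, a ``half'' of $N$ obtained via the partitive operations of $\mathscr M(H[N])$. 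For (3): since $M\cup(\righthalfcap_T M)$ is a module of $T$ it is in particular a module of $H$; but more is needed — I would show it is contained in every strong module of $H$ containing $M$, equivalently $\widetilde M^H\supseteq \righthalfcap_T M$, by taking a strong module $S\supseteq M$ and using that $S$ is a module of $T$ as well (Theorem~\ref{same_strong_modules}, applied once (2) and the surrounding analysis give that $\righthalfcap_T$ is controlled by strong modules) hence $S$, being a module of $T$ containing $M$ but not a sub-behaviour... — cleanly: any strong module $S$ of $H$ with $M\subseteq S$ is a module of $T$ (Theorem~\ref{same_strong_modules}), and a module of $T$ containing $M$ must contain $\righthalfcap_T M$ by the very definition of the latter together with the fact that $S$ is a module of $T[S\cup\{v\}]$ for all $v$; this gives $\righthalfcap_T M\subseteq S$, whence $M\cup(\righthalfcap_T M)\subseteq\widetilde M^H$. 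Finally (4): put $X=\widetilde M^H$; by (3), $H[X]$ contains the proper module $M\cup(\righthalfcap_T M)$ and also $M$, two overlapping modules of $H[X]$, so $H[X]/\Pi(H[X])$ is not prime; it is not a complete graph either, because in $H[\widetilde M^H]$ the quotient being complete would force (via Theorem~\ref{Th-bis_Gallai} and its proof, together with the self-complementarity of modules under $H\mapsto H^c$) the module $M\cup(\righthalfcap_T M)$ to be a disjoint union of $\Pi$-blocks in a way incompatible with $M$ straddling it through $T$-$3$-cycles; hence $\varepsilon_H(\widetilde M^H)=\medcircle$. The bound $|\Pi(H[\widetilde M^H])|\ge 3$ follows since $\widetilde M^H$ is strong and minimal over $W=M$ — if $|\Pi|=2$ then $\widetilde M^H$ would have a proper strong module strictly containing $M$ (one of the two blocks, or their union suitably), contradicting minimality of $\widetilde M^H$; so $|\Pi(H[\widetilde M^H])|\ge 3$.
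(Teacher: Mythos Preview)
There are two genuine gaps.

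\textbf{The ``uniform behaviour'' lemma you state is false.} You plan to show that every $v\in\righthalfcap_T M$ dominates a \emph{fixed} subset of $M$ and is dominated by its complement, i.e.\ that all vertices of $\righthalfcap_T M$ induce the \emph{same} bipartition of $M$. This is not true: the correct statement, and the one the paper proves and uses, is that the sets $N_T^-(v)\cap M$ for $v\in\righthalfcap_T M$ form a \emph{chain} under inclusion (if $vw\in A(T)$ then $N_T^-(v)\cap M\subseteq N_T^-(w)\cap M$). The paper's proof of assertion~(1) does not need the bipartitions to coincide; it only needs to show that every $m^-\in M^-$ dominates every $v\in\righthalfcap_T M$ (and dually), which follows from a direct $3$-cycle argument independent of the nesting. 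Your case analysis for (1), as sketched, relies on the wrong version and would not go through. For assertion~(2), the correct nested structure is essential: the paper exhibits an overlapping module of $H$ by taking $N_v=(N_T^-(v)\cap M)\cup\{w\in\righthalfcap_T M: N_T^-(w)\cap M\subseteq N_T^-(v)\cap M\}$ and checking it is a module of $T$ (hence of $H$) with $M\cap N_v$, $M\setminus N_v$, $N_v\setminus M$ all nonempty. Your remark that (2) is ``immediate from (1)'' is not right: $M\cup(\righthalfcap_T M)$ is a module of $H$ \emph{containing} $M$, which is perfectly compatible with $M$ being strong; you need a module that properly overlaps $M$, and that requires the finer $N_v$ construction.

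\textbf{Your argument for (3) is circular.} You write: ``any strong module $S$ of $H$ with $M\subseteq S$ is a module of $T$ (Theorem~\ref{same_strong_modules})\ldots''. But Theorem~\ref{same_strong_modules} is proved in the paper \emph{via} Lemma~\ref{lem_same_strong_modules}, which in turn invokes assertion~(2) of the very proposition you are trying to prove. You cannot use it here. The paper's non-circular route is to reuse the module $N_v$ from the proof of (2): since $N_v$ is a module of $H$ with $M\cap N_v\neq\emptyset$ and $M\setminus N_v\neq\emptyset$, and $\widetilde M^H$ is strong with $\widetilde M^H\cap N_v\neq\emptyset$, one gets $N_v\subseteq\widetilde M^H$, whence $v\in\widetilde M^H$ for every $v\in\righthalfcap_T M$. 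Similarly, your sketch for (4) is too loose: the paper obtains $|\Pi(H[\widetilde M^H])|\ge 3$ and $\varepsilon_H(\widetilde M^H)=\medcircle$ by exhibiting the explicit modular tripartition $P_v=\{N_T^-(v)\cap M,\,N_T^+(v)\cap M,\,\righthalfcap_T M\}$ of $H[M\cup(\righthalfcap_T M)]$ with empty quotient, and then arguing that each block of $P_v$ is a union of blocks of $\Pi(H[\widetilde M^H])$; your ``if $|\Pi|=2$ then $\widetilde M^H$ would have a proper strong module strictly containing $M$'' does not follow.
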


\begin{proof}
Since $M$ is not a module of $T$, we have $\righthalfcap_T M\neq\emptyset$. 
Let $v\in\righthalfcap_T M$. 
Since $M$ is not a module of $T[M\cup\{v\}]$, we obtain 
\begin{equation}\label{E0_P1_real_dec}
\begin{cases}
N_T^-(v)\cap M\neq\emptyset\\
\text{and}\\
N_T^+(v)\cap M\neq\emptyset.
\end{cases}
\end{equation}
Furthermore, consider $v^-\in N_T^-(v)\cap M$ and $v^+\in N_T^+(v)\cap M$. 
Since $M$ is a module of $H$, $v^-vv^+\not\in E(H)$. 
Hence $v^-vv^+\not\in E(C_3(T))$. 
Since $v^-v,vv^+\in A(T)$, we get $v^-v^+\in A(T)$. 
Therefore, for each $v\in\righthalfcap_T M$, we have 
\begin{equation}\label{E1_P1_real_dec}
\text{for $v^-\in N_T^-(v)\cap M$ and $v^+\in N_T^+(v)\cap M$, $v^-v^+\in A(T)$. }
\end{equation}

Now, consider $v,w\in\righthalfcap_T M$ such that $vw\in A(T)$. 
Let $v^-\in N_T^-(v)\cap M$. 
Suppose for a contradiction that $v^-\in N_T^+(w)\cap M$. 
We get $v^-vw\in E(C_3(T))$, and hence $v^-vw\in E(H)$. 
Since $M$ is a module of $H$, we obtain $\mu vw\in E(H)$ for every $\mu\in M$. 
Thus, since $vw\in A(T)$, $\mu v\in A(T)$ for every $\mu\in M$. 
Therefore, $M\subseteq N_T^-(v)$, so $N_T^+(v)\cap M=\emptyset$, which contradicts 
\eqref{E0_P1_real_dec}. 
It follows that for $v,w\in\righthalfcap_T M$, we have
\begin{equation}\label{E2_P1_real_dec}
\text{if $vw\in A(T)$, then $N_T^-(v)\cap M\subseteq N_T^-(w)\cap M$.} 
\end{equation}

For the first assertion, set 
\begin{equation*}
\begin{cases}
M^-=\{v\in V(H)\setminus M:vm\in A(T)\ \text{for every}\ m\in M\}\\
\text{and}\\
M^+=\{v\in V(H)\setminus M:mv\in A(T)\ \text{for every}\ m\in M\}.
\end{cases}
\end{equation*}
Note that $\{M^-,M,\righthalfcap_T M,M^+\}$ is a partition of $V(H)$. 
Let $m^-\in M^-$ and $v\in\righthalfcap_T M$. 
By \eqref{E0_P1_real_dec}, there exist 
$v^-\in N_T^-(v)\cap M$ and $v^+\in N_T^+(v)\cap M$. 
Suppose for a contradiction that $vm^-\in A(T)$. 
We get $vm^-v^-\in E(C_3(T))$. 
Hence $vm^-v^-\in E(H)$. 
Since $m^-v^+,vv^+\in A(T)$, we have $vm^-v^+\not\in E(C_3(T))$. 
Thus $vm^-v^+\not\in E(H)$, which contradicts the fact that $M$ is a module of $H$. 
It follows that $m^-v\in A(T)$ for any $m^-\in M^-$ and $v\in\righthalfcap_T M$. 
Similarly, $vm^+\in A(T)$ for any $m^+\in M^+$ and $v\in\righthalfcap_T M$. 
It follows that $M\cup(\righthalfcap_T M)$ is a module of $T$. 

For the second assertion, consider $v\in(\righthalfcap_T M)$. 
Set 
\begin{equation}\label{E3_P1_real_dec}
N_v=(N_T^-(v)\cap M)\cup\{w\in(\righthalfcap_T M):N_T^-(w)\cap M\subseteq N_T^-(v)\cap M\}.
\end{equation}
We show that $N_v$ is a module of $T$. 
If $m^-\in M^-$, then $m^-n\in A(T)$ for every $n\in N_v$ because 
$M\cup(\righthalfcap_T M)$ is a module of $T$. 
Similarly, if $m^+\in M^+$, then $nm^+\in A(T)$ for every $n\in N_v$. 
Now, consider $m\in M\setminus N_v$. 
We get $m\in M\setminus N_T^-(v)$. 
Therefore, we have $m\in M\setminus N_T^-(w')$ for every 
$w'\in\{w\in(\righthalfcap_T M):N_T^-(w)\cap M\subseteq N_T^-(v)\cap M\}$. 
Thus, $m\in N_T^+(w')\cap M$ for every 
$w'\in\{w\in(\righthalfcap_T M):N_T^-(w)\cap M\subseteq N_T^-(v)\cap M\}$. 
Since $m\in N_T^+(v)\cap M$, it follows from \eqref{E1_P1_real_dec} that 
$v^-m\in A(T)$ for every $v^-\in N_T^-(v)\cap M$. 
Furthermore, since $m\in N_T^+(w')\cap M$ for every 
$w'\in\{w\in(\righthalfcap_T M):N_T^-(w)\cap M\subseteq N_T^-(v)\cap M\}$, we have 
$w'm\in A(T)$ for every 
$w'\in\{w\in(\righthalfcap_T M):N_T^-(w)\cap M\subseteq N_T^-(v)\cap M\}$. 
Therefore, we obtain $nm\in A(T)$ for every $n\in N_v$. 
Lastly, consider $u\in (\righthalfcap_T M)\setminus N_v$. 
We get $u\in (\righthalfcap_T M)$ and $N_T^-(u)\cap M\not\subseteq N_T^-(v)\cap M$. 
It follows from \eqref{E2_P1_real_dec} that $vu\in A(T)$. 
By \eqref{E2_P1_real_dec} again, we have 
$N_T^-(v)\cap M\subsetneq N_T^-(u)\cap M$. 
Thus $v^-u\in A(T)$ for each $v^-\in N_T^-(v)\cap M$. 
Let $w'\in\{w\in(\righthalfcap_T M):N_T^-(w)\cap M\subseteq N_T^-(v)\cap M\}$. 
We get $N_T^-(w')\cap M\subsetneq N_T^-(u)\cap M$. 
It follows from \eqref{E2_P1_real_dec} that $w'u\in A(T)$. 
Consequently, $N_v$ is a module of $T$ for each $v\in(\righthalfcap_T M)$. 
Hence, 
\begin{equation}\label{E3a_P1_real_dec}
\text{$N_v$ is a module of $H$ for each $v\in(\righthalfcap_T M)$.} 
\end{equation}
(We use \eqref{E3a_P1_real_dec} to prove the third assertion below.) 
Let $v\in(\righthalfcap_T M)$. 
Clearly, $v\in N_v\setminus M$. 
Moreover, it follows from \eqref{E0_P1_real_dec} that there exist $v^-\in N_T^-(v)\cap M$ and 
$v^+\in N_T^+(v)\cap M$. 
We get $v^-\in M\cap N_v$ and $v^+\in M\setminus N_v$. 
Since $N_v$ is a module of $H$, $M$ is not a strong module of $H$.

For the third assertion, consider $v\in(\righthalfcap_T M)$. 
As previously proved, $N_v$ is a module of $H$. 
Furthermore, by considering $v^-\in N_T^-(v)\cap M$ and 
$v^+\in N_T^+(v)\cap M$, we obtain $M\cap N_v\neq\emptyset$ and $M\setminus N_v\neq\emptyset$. 
Hence $\widetilde{M}^H\cap N_v\neq\emptyset$ and 
$\widetilde{M}^H\setminus N_v\neq\emptyset$. 
Since $\widetilde{M}^H$ is a strong module of $H$, we get $N_v\subseteq\widetilde{M}^H$. 
Thus $v\in\widetilde{M}^H$ for every $v\in(\righthalfcap_T M)$. 
Therefore $M\cup(\righthalfcap_T M)\subseteq\widetilde{M}^H$. 

For the fourth assertion, we prove that for each $v\in(\righthalfcap_T M)$, 
$$P_v=\{N_T^-(v)\cap M,N_T^+(v)\cap M,\righthalfcap_T M\}$$ is a modular partition of 
$H[M\cup(\righthalfcap_T M)]$. 
Let $v\in(\righthalfcap_T M)$. 
By \eqref{E1_P1_real_dec}, $N_T^-(v)\cap M$ and $N_T^+(v)\cap M$ are 
modules of $T[M]$. 
Thus, $N_T^-(v)\cap M$ and $N_T^+(v)\cap M$ are modules of $H[M]$. 
Since $M$ is a module of $H$, it follows from Lemma~\ref{lem_(A3)} that $N_T^-(v)\cap M$ and $N_T^+(v)\cap M$ are modules of $H$. 
By Lemma~\ref{lem_(A2)}, 
$N_T^-(v)\cap M$ and $N_T^+(v)\cap M$ are modules of $H[M\cup(\righthalfcap_T M)]$. 
Now, we prove that $\righthalfcap_T M$ is a module of $H[M\cup(\righthalfcap_T M)]$. 
It suffices to prove that there exists no $e\in E(H[M\cup(\righthalfcap_T M)])$ such that 
$e\cap(\righthalfcap_T M)\neq\emptyset$ and $e\cap M\neq\emptyset$. 
Indeed, suppose to the contrary that there exists $e\in E(H[M\cup(\righthalfcap_T M)])$ such that $e\cap(\righthalfcap_T M)\neq\emptyset$ and $e\cap M\neq\emptyset$. 
Since $M$ is a module of $H$, we get $|e\cap M|=1$ and 
$|e\cap(\righthalfcap_T M)|=2$. 
Therefore, there exist $v,w\in e\cap(\righthalfcap_T M)$ and $m\in e\cap M$ such that 
$vwm\in E(H)$. 
By replacing $v$ by $w$ if necessary, we can assume that $vw\in A(T)$. 
Since $H=C_3(T)$, we obtain $vw,wm,mv\in A(T)$, which contradicts \eqref{E2_P1_real_dec}. 
Therefore, $\righthalfcap_T M$ is a module of $H[M\cup(\righthalfcap_T M)]$. 
Consequently, $P_v=\{N_T^-(v)\cap M,N_T^+(v)\cap M,\righthalfcap_T M\}$ is a modular partition of $H[M\cup(\righthalfcap_T M)]$. 
Furthermore, given $v\in(\righthalfcap_T M)$, consider $v^-\in N_T^-(v)\cap M$ and $v^+\in N_T^+(v)\cap M$. 
It follows from \eqref{E1_P1_real_dec} that $v^-v^+v\not\in E(C_3(T))$, and hence 
$v^-v^+v\not\in E(H)$. 
Consequently, 
\begin{equation}\label{E3bis_P1_real_dec}
\text{$H[M\cup(\righthalfcap_T M)]/P_v$ is empty.}
\end{equation}

Since $M\cup(\righthalfcap_T M)$ is a module of $T$ by the first assertion above, 
$M\cup(\righthalfcap_T M)$ is a module of $H$. 
By Lemma~\ref{lem_(A2)}, 
$M\cup(\righthalfcap_T M)$ is a module of $H[\widetilde{M}^H]$. 
Given $v\in(\righthalfcap_T M)$, it follows from Lemma~\ref{lem_(A3)} that each element of $P_v$ is a module of $H[\widetilde{M}^H]$. 

Let $v\in(\righthalfcap_T M)$. 
For a contradiction, suppose that there exist $Y\in P_v$ and $X\in\Pi(H[\widetilde{M}^H])$ such that $Y\subsetneq X$. 
We get $X\cap(M\cup(\righthalfcap_T M))\neq\emptyset$. 
Since $M\cup(\righthalfcap_T M)$ is a module of $H[\widetilde{M}^H]$ and 
$X$ is a strong module of $H[\widetilde{M}^H]$, we have $M\cup(\righthalfcap_T M)\subseteq X$ or $X\subsetneq M\cup(\righthalfcap_T M)$. 
Furthermore, since $X$ is a strong module of $H[\widetilde{M}^H]$ and 
$\widetilde{M}^H$ is a strong module of $H$, it follows from 
Proposition~\ref{prop_strong_hypergraphs} that $X$ is a strong module of $H$. Since $X\subsetneq\widetilde{M}^H$, it follows from the minimality of $\widetilde{M}^H$ that we do not have $M\cup(\righthalfcap_T M)\subseteq X$. 
Therefore, $X\subsetneq M\cup(\righthalfcap_T M)$. 
Let $x\in X\setminus Y$. 
We have $x\in(M\cup(\righthalfcap_T M))\setminus Y$. 
Denote by $Y'$ the unique element of $P_v\setminus\{Y\}$ such that $x\in Y'$. 
Also, denote by $Z$ the unique element of $P_v\setminus\{Y,Y'\}$. 
We get $X\cap Y'\neq\emptyset$ and $Y\subseteq X\setminus Y'$. 
Since $X$ is a strong module of $H[\widetilde{M}^H]$, we get $Y'\subseteq X$. 
Since $X\subsetneq M\cup(\righthalfcap_T M)$, we obtain $X\cap Z=\emptyset$. 
Thus $X=Y\cup Y'$. 
Since $H[M\cup(\righthalfcap_T M)]/P_v$ is empty by \eqref{E3bis_P1_real_dec}, $\{Y,Z\}$ is a module of 
$H[M\cup(\righthalfcap_T M)]/P_v$. 
By the second assertion of Proposition~\ref{prop1_partition}, $Y\cup Z$ is a module of 
$H[M\cup(\righthalfcap_T M)]$. 
As previously seen, $M\cup(\righthalfcap_T M)$ is a module of $H[\widetilde{M}^H]$. 
By Lemma~\ref{lem_(A3)}, 
$Y\cup Z$ is a module of $H[\widetilde{M}^H]$, which contradicts the fact that 
$X$ is a strong module of $H[\widetilde{M}^H]$. 
Consequently, 
\begin{equation}\label{E4_P1_real_dec}
\text{for any $Y\in P_v$ and $X\in\Pi(H[\widetilde{M}^H])$, we do not have 
$Y\subsetneq X$}.
\end{equation}
Let $Y\in P_v$. 
Set $$Q_Y=\{X\in \Pi(H[\widetilde{M}^H]):X\cap Y\neq\emptyset\}.$$
For every $X\in Q_Y$, we have 
$Y\subsetneq X$ or $X\subseteq Y$ because $X$ is a strong module of $H[\widetilde{M}^H]$. 
By \eqref{E4_P1_real_dec}, we have $X\subseteq Y$. 
It follows that 
\begin{equation}\label{E5_P1_real_dec}
\text{for each $Y\in P_v$, we have $Y=\cup Q_Y$.}
\end{equation}
Therefore, $|\Pi(H[\widetilde{M}^H])|\geq|P_v|$, that is, $$|\Pi(H[\widetilde{M}^H])|\geq 3.$$
Finally, we prove that $H[\widetilde{M}^H]/\Pi(H[\widetilde{M}^H])$ is empty. 
Suppose that $M\cup(\righthalfcap_T M)\subsetneq\widetilde{M}^H$, and set 
$$Q_{M\cup(\righthalfcap_T M)}=\{X\in \Pi(H[\widetilde{M}^H]):X\cap(M\cup(\righthalfcap_T M))\neq\emptyset\}.$$ 
Since $M\cup(\righthalfcap_T M)$ is a module of $H[\widetilde{M}^H]$, it follows from 
the first assertion of Proposition~\ref{prop1_partition} that 
$Q_{M\cup(\righthalfcap_T M)}$ is a module of 
$H[\widetilde{M}^H]/\Pi(H[\widetilde{M}^H])$. 
Moreover, it follows from \eqref{E5_P1_real_dec} that 
$|Q_{M\cup(\righthalfcap_T M)}|\geq 3$. 
Since each element of $\Pi(H[\widetilde{M}^H])$ is a strong element of $\widetilde{M}^H$, we get $M\cup(\righthalfcap_T M)=\cup Q_{M\cup(\righthalfcap_T M)}$. 
Since $M\cup(\righthalfcap_T M)\subsetneq\widetilde{M}^H$, we obtain that 
$Q_{M\cup(\righthalfcap_T M)}$ is a nontrivial module of 
$H[\widetilde{M}^H]/\Pi(H[\widetilde{M}^H])$. 
Hence $H[\widetilde{M}^H]/\Pi(H[\widetilde{M}^H])$ is decomposable. 
It follows from Theorem~\ref{Thbis_Gallai} that 
$H[\widetilde{M}^H]/\Pi(H[\widetilde{M}^H])$ is empty. 
Lastly, suppose that $M\cup(\righthalfcap_T M)=\widetilde{M}^H$. 
Suppose also that there exists $Y\in P_v$ such that $|Q_Y|\geq 2$. 
As previously, we obtain that $Q_Y$ is a nontrivial module of 
$H[\widetilde{M}^H]/\Pi(H[\widetilde{M}^H])$, and hence 
$H[\widetilde{M}^H]/\Pi(H[\widetilde{M}^H])$ is empty. 
Therefore, suppose that $|Q_Y|=1$ for every $Y\in P_v$. 
By \eqref{E5_P1_real_dec}, $\Pi(H[\widetilde{M}^H])=P_v$. 
Hence $H[\widetilde{M}^H]/\Pi(H[\widetilde{M}^H])$ is empty by 
\eqref{E3bis_P1_real_dec}. 
\end{proof}

The next result is an easy consequence of Proposition~\ref{P1_real_dec}. 

\begin{cor}\label{C1_real_dec}
Consider a realizable 3-uniform hypergraph $H$, and a realization $T$ of $H$. 
The following two assertions are equivalent
\begin{itemize}
\item $H$ and $T$ share the same modules;
\item for each strong module $X$ of $H$ such that $|X|\geq 2$, we have 
$$\text{if $\varepsilon_H(X)=\medcircle$, then 
$|\Pi(H[X])|=2$.}$$
\end{itemize}
\end{cor}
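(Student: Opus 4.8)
The plan is to prove the two implications separately, leaning entirely on Proposition~\ref{P1_real_dec}. For the implication from top to bottom (contrapositive), I would assume there is a strong module $X$ of $H$ with $|X|\geq 2$, $\varepsilon_H(X)=\medcircle$, and $|\Pi(H[X])|\geq 3$, and produce a module of $H$ that is not a module of $T$. The natural candidate is obtained from the empty quotient $H[X]/\Pi(H[X])$: since this quotient is empty with at least three parts, any union of two of the parts $A\cup B$ (with $A,B\in\Pi(H[X])$, $C$ a third part) is a module of $H[X]/\Pi(H[X])$, hence by the second assertion of Proposition~\ref{prop1_partition} a module of $H[X]$, hence (since $X$ is a strong, in particular a, module of $H$) a module of $H$ by Lemma~\ref{lem_(A3)}. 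I then have to check that $A\cup B$ is \emph{not} a module of $T$: pick $a\in A$, $b\in B$, $c\in C$; since $H[X]/\Pi(H[X])$ is empty, $abc\notin E(H)$, so $abc$ does not induce a $3$-cycle in $T$, which forces a transitive triple on $\{a,b,c\}$; by swapping the roles of $A$ and $B$ if necessary (the parts play symmetric roles since the quotient is empty, so $A\cup B$, $A\cup C$, $B\cup C$ are all modules of $H$) one of the three unions has its two chosen vertices "separated" by the third in $T$, and that union is then not a module of $T$. This is the step I expect to require the most care: I must argue that not all three of $A\cup B$, $B\cup C$, $A\cup C$ can be modules of $T$, which amounts to observing that if all three were modules of $T$ then $T[\{a,b,c\}]$ would have to be both a module-compatible configuration in every pairing, and a short case analysis on the orientation of the transitive triple $\{a,b,c\}$ shows exactly one of the three pairings fails.

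For the converse implication, I would again argue by contraposition: assume $H$ and $T$ do \emph{not} share the same modules, so there is a module $M$ of $H$ that is not a module of $T$. Then Proposition~\ref{P1_real_dec} applies verbatim: its fourth assertion gives $\varepsilon_H(\widetilde{M}^H)=\medcircle$ and $|\Pi(H[\widetilde{M}^H])|\geq 3$, while $\widetilde{M}^H$ is by definition a strong module of $H$, and $|\widetilde{M}^H|\geq 2$ because $M\subsetneq M\cup(\righthalfcap_T M)\subseteq\widetilde{M}^H$ by the first and third assertions (recall $\righthalfcap_T M\neq\emptyset$ since $M$ is not a module of $T$). So $X=\widetilde{M}^H$ is a strong module of $H$ of size $\geq 2$ with $\varepsilon_H(X)=\medcircle$ and $|\Pi(H[X])|\geq 3$, which is exactly the negation of the second bullet. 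This direction is essentially immediate from Proposition~\ref{P1_real_dec} and requires no new construction.

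Putting the two contrapositives together yields the stated equivalence. The only genuine work is in the first direction, and there the crux is the elementary tournament fact that among the three pairings $\{A\cup B, B\cup C, A\cup C\}$ of three pairwise non-adjacent (in the quotient sense) parts, at least one fails to be a module of any realization $T$; once this is isolated, everything else is bookkeeping with Proposition~\ref{prop1_partition}, Lemma~\ref{lem_(A2)} and Lemma~\ref{lem_(A3)}.
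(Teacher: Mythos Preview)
Your proposal is correct and follows the same contrapositive structure as the paper. The direction deducing the failure of the second bullet from a module of $H$ that is not a module of $T$ is identical: it is immediate from assertion~4 of Proposition~\ref{P1_real_dec}, with $X=\widetilde{M}^H$.

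For the other direction the paper proceeds a little differently. Rather than analysing a transitive triple on representatives $a,b,c$ and arguing that one of the three unions $A\cup B$, $B\cup C$, $A\cup C$ must fail to be a module of $T$, the paper first invokes assertion~2 of Proposition~\ref{P1_real_dec} in contrapositive form (every strong module of $H$ is a module of $T$) to conclude that $X$ and each block of $\Pi(H[X])$ are modules of $T$; hence $\Pi(H[X])$ is a modular partition of $T[X]$, and since the $H$-quotient is empty, $T[X]/\Pi(H[X])$ is a linear order. The explicit witness is then $Y_{\min}\cup Y_{\max}$, the union of the least and greatest blocks, which is a module of $H$ (via Proposition~\ref{prop1_partition} and Lemma~\ref{lem_(A3)}) but not of $T$, as any intermediate block separates them. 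Your local argument avoids this structural step and is slightly more elementary; the paper's version is more canonical, pinpointing a specific pair of blocks rather than appealing to a case split.
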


\begin{proof}
To begin, suppose that $H$ and $T$ do not share the same modules. 
There exists a module $M$ of $H$, which is not a module of $T$. 
By the last assertion of Proposition~\ref{P1_real_dec}, we obtain 
$\varepsilon_H(\widetilde{M}^H)=\medcircle$ and 
$|\Pi(H[\widetilde{M}^H])|\geq 3$. 

Conversely, suppose that there exists a strong module $X$ of $H$, with $|X|\geq 2$, such that 
$\varepsilon_H(X)=\medcircle$ and 
$|\Pi(H[X])|\geq 3$. 
It follows from the second assertion of Proposition~\ref{P1_real_dec} that 
$X$ is a module of $T$. 
Observe that $T[X]$ realizes $H[X]$. 
Let $Y\in\Pi(H[X])$. 
Since $Y$ is a strong module of $H[X]$, 
it follows from the second assertion of Proposition~\ref{P1_real_dec} applied to $H[X]$ and $T[X]$ that $Y$ is a module of $T[X]$. 
Thus, $\Pi(H[X])$ is a modular partition of $T[X]$. 
Since $H[X]/\Pi(H[X])$ is empty, $T[X]/\Pi(H[X])$ is a linear order. 
Denote by $Y_{{\rm min}}$ the smallest element of $T[X]/\Pi(H[X])$. 
Similarly, denote by $Y_{{\rm max}}$ the largest element of $T[X]/\Pi(H[X])$. 
Since $H[X]/\Pi(H[X])$ is empty, $\{Y_{{\rm min}},Y_{{\rm max}}\}$ is a module of 
$H[X]/\Pi(H[X])$. 
By the second assertion of Proposition~\ref{prop1_partition}, 
$Y_{{\rm min}}\cup Y_{{\rm max}}$ is a module of $H[X]$. 
Since $X$ is a module of $H$, it follows from Lemma~\ref{lem_(A3)} that $Y_{{\rm min}}\cup Y_{{\rm max}}$ is a module of $H$. 
Lastly, since $|\Pi(H[X])|\geq 3$, there exists 
$Y\in\Pi(H[X])\setminus\{Y_{{\rm min}},Y_{{\rm max}}\}$. 
Since $Y_{{\rm min}}$ is the smallest element of $T[X]/\Pi(H[X])$ and 
$Y_{{\rm max}}$ is the largest one, we obtain 
$Y_{{\rm min}}Y,YY_{{\rm max}}\in A(T[X]/\Pi(H[X]))$. 
Therefore, for $y_{{\rm min}}\in Y_{{\rm min}}$, $y\in Y$ and 
$y_{{\rm max}}\in Y_{{\rm max}}$, we have 
$y_{{\rm min}}y,yy_{{\rm max}}\in A(T[X])$, and hence 
$y_{{\rm min}}y,yy_{{\rm max}}\in A(T)$. 
Consequently, $Y_{{\rm min}}\cup Y_{{\rm max}}$ is not a module of $T$. 
\end{proof}

Now, we prove Theorem~\ref{same_strong_modules} by using Proposition~\ref{P1_real_dec} 
and the following lemma. 

\begin{lem}\label{lem_same_strong_modules}
Consider a realizable 3-uniform hypergraph $H$. 
Given a realization $T$ of $H$, all the strong modules of $H$ are strong modules of $T$. 
\end{lem}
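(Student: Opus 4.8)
The plan is to exploit the asymmetry between the module structures of $T$ and of $H$. I would rely on two facts recorded earlier in the excerpt. First, as observed at the opening of Section~\ref{real_dec}, every module of $T$ is a module of $H$. Second, the second assertion of Proposition~\ref{P1_real_dec} states that a module of $H$ that fails to be a module of $T$ is not a strong module of $H$; reading this contrapositively, every strong module of $H$ is a module of $T$.

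Armed with these, I would argue as follows. Let $X$ be a strong module of $H$. Since a strong module of $H$ is in particular a module of $H$, Proposition~\ref{P1_real_dec} applies to $X$, and its second assertion (in contrapositive form) immediately gives that $X$ is a module of $T$. Hence the only thing left to verify is the strongness condition in $T$. For this, I would take an arbitrary module $N$ of $T$ with $X\cap N\neq\emptyset$. Since $N$ is a module of $T$, it is also a module of $H$. As $X$ is a strong module of $H$ meeting $N$, the definition of a strong module of $H$ yields $X\subseteq N$ or $N\subseteq X$. Because $N$ ranged over all modules of $T$ meeting $X$, this establishes precisely that $X$ is a strong module of $T$.

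The conceptual reason the argument closes so quickly is that the family of modules against which strongness must be tested \emph{contracts} when passing from $H$ to $T$: the comparisons required for $T$-strongness range over a subfamily (the modules of $T$) of the family (the modules of $H$) over which $X$ is already known to be strong. Consequently there is no substantial obstacle; the heart of the matter is just the inclusion ``every module of $T$ is a module of $H$'' together with Proposition~\ref{P1_real_dec}(2), which simultaneously supplies the missing ingredient that $X$ is a module of $T$ in the first place. The only point requiring attention is to note that a strong module of $H$ is first of all a module of $H$, so that Proposition~\ref{P1_real_dec} is indeed applicable to it.
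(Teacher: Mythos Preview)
Your argument is correct and essentially identical to the paper's: it too invokes the contrapositive of Proposition~\ref{P1_real_dec}(2) to get that a strong module $M$ of $H$ is a module of $T$, and then uses the inclusion $\mathscr{M}(T)\subseteq\mathscr{M}(H)$ to reduce the $T$-strongness test to the $H$-strongness already assumed. The paper's proof is just a terser rendition of the same three steps.
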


\begin{proof}
 Consider a strong module $M$ of $H$. 
By the second assertion of Proposition~\ref{P1_real_dec}, $M$ is a module of $T$. 
Let $N$ be a module of $T$ such that $M\cap N\neq\emptyset$. 
Since $N$ is a module of $T$, $N$ is a module of $H$. 
Furthermore, since $M$ is a strong module of $H$, we obtain 
$M\subseteq N$ or $N\subseteq M$. 
Therefore, $M$ is a strong module of $T$. 
\end{proof}

\begin{proof}[Proof of Theorem~\ref{same_strong_modules}]
By Lemma~\ref{lem_same_strong_modules}, 
all the strong modules of $H$ are strong modules of $T$. 

Conversely, consider a strong module $M$ of $T$. 
Since $M$ is a module of $T$, $M$ is a module of $H$. 
Let $N$ be a module of $H$ such that $M\cap N\neq\emptyset$. 
If $N$ is a module of $T$, then $M\subseteq N$ or $N\subseteq M$ because $M$ is a strong module of $T$. 
Hence suppose that $N$ is not a module of $T$. 
By the last assertion of Proposition~\ref{P1_real_dec}, 
\begin{equation}\label{E1_same_strong_modules}
\begin{cases}
\text{$H[\widetilde{N}^H]/\Pi(H[\widetilde{N}^H])$ is empty}\\
\text{and}\\
|\Pi(H[\widetilde{N}^H])|\geq 3. 
\end{cases}
\end{equation}
Since $M\cap N\neq\emptyset$, $M\cap\widetilde{N}^H\neq\emptyset$. 
Since $\widetilde{N}^H$ is a strong module of $H$, we get 
$\widetilde{N}^H\subseteq M$ or $M\subsetneq\widetilde{N}^H$. 
Clearly, if $\widetilde{N}^H\subseteq M$, then $N\subseteq M$. 
Thus, suppose that $$M\subsetneq\widetilde{N}^H.$$ 
We prove that $M\subseteq N$. 
By Lemma~\ref{lem_same_strong_modules}, 
$\widetilde{N}^H$ is a strong module of $T$. 
Since $M$ is a strong module of $T$, it follows from Proposition~\ref{prop_strong_tournaments} that $M$ is a strong module of $T[\widetilde{N}^H]$. 
For each $X\in\Pi(H[\widetilde{N}^H])$, $X$ is a strong module of $T[\widetilde{N}^H]$ by 
Lemma~\ref{lem_same_strong_modules}. 
Therefore, $\Pi(H[\widetilde{N}^H])$ is a modular partition of $T[\widetilde{N}^H]$. 
Clearly, $T[\widetilde{N}^H]/$ $\Pi(H[\widetilde{N}^H])$ is a realization of 
$H[\widetilde{N}^H]/\Pi(H[\widetilde{N}^H])$. 
Set $$Q_M=\{X\in\Pi(H[\widetilde{N}^H]):M\cap X\neq\emptyset\}.$$
By the first assertion of Proposition~\ref{prop_strong_quotient_tournaments}, $Q_M$ is a strong module of $T[\widetilde{N}^H]/$ $\Pi(H[\widetilde{N}^H])$. 
Since $H[\widetilde{N}^H]/\Pi(H[\widetilde{N}^H])$ is empty by \eqref{E1_same_strong_modules}, 
$T[\widetilde{N}^H]/\Pi(H[\widetilde{N}^H])$ is a linear order. 
By Theorem~\ref{tournament_ThB_Gallai}, $Q_M$ is a trivial module of 
$T[\widetilde{N}^H]/\Pi(H[\widetilde{N}^H])$. 
For a contradiction, suppose that $Q_M=\Pi(H[\widetilde{N}^H])$. 
Since $M$ is a strong module of $T[\widetilde{N}^H]$, we get $M=\widetilde{N}^H$, which contradicts $M\subsetneq\widetilde{N}^H$. 
It follows that $|Q_M|=1$. 
Hence there exists $X_M\in\Pi(H[\widetilde{N}^H])$ such that $$M\subseteq X_M.$$
Since $N$ is not a module of $T$, it follows from the second assertion of 
Proposition~\ref{P1_real_dec} that $N$ is not a strong module of $H$. 
Thus $N\subsetneq\widetilde{N}^H$. 
Set $$Q_N=\{X\in\Pi(H[\widetilde{N}^H]):N\cap X\neq\emptyset\}.$$
Since $\widetilde{N}^H$ is a strong module of $H$, it follows from 
Proposition~\ref{prop_strong_hypergraphs} that each element of 
$\Pi(H[\widetilde{N}^H])$ is a strong module of $H$. 
It follows from the minimality of $\widetilde{N}^H$ that $|Q_N|\geq 2$. 
Since each element of 
$\Pi(H[\widetilde{N}^H])$ is a strong module of $H$, we obtain $$N=\cup Q_N.$$
Since $M\cap N\neq\emptyset$, we get $X_M\in Q_N$. 
We obtain $M\subseteq X_M\subseteq N$. 
\end{proof}

Lastly, we establish Theorem~\ref{same_primality} by using Theorems~\ref{Thbis_Gallai} and \ref{same_strong_modules}.

\begin{proof}[Proof of Theorem~\ref{same_primality}]
Suppose that $H$ is prime. 
Since all the modules of $T$ are modules of $H$, $T$ is prime. 

Conversely, suppose that $T$ is prime. Hence, all the strong modules of $T$ are trivial. 
By Theorem~\ref{same_strong_modules}, all the strong modules of $H$ are trivial. 
We obtain $$\Pi(H)=\{\{v\}:v\in V(H)\}.$$
Thus, $H$ is isomorphic to $H/\Pi(H)$. 
It follows from Theorem~\ref{Thbis_Gallai} that $H$ 
is an empty hypergraph, a prime hypergraph or a complete graph. 
Since $T$ is prime, we have $E(C_3(T))\neq\emptyset$. 
Since $E(C_3(T))=E(H)$, there exists $e\in E(H)$ such that $|e|=3$. 
Therefore, $H$ is not an empty hypergraph, and $H$ is not a graph. 
It follows that $H$ is prime. 
\end{proof}

\section{Realizability of 3-uniform hypergraphs}\label{real}

The next proposition is useful to construct realizations from the modular decomposition tree of a realizable 3-uniform hypergraph. 
We need the following notation and remark. 

\begin{nota}
Let $H$ be a 3-uniform hypergraph. 
We denote by $\mathscr{R}(H)$ the set of the realizations of $H$. 
\end{nota}

\begin{rem}\label{rem_theta}
Let $H$ be a realizable 3-uniform hypergraph. 
Consider $T\in\mathscr{R}(H)$. 
It follows from Theorem~\ref{same_strong_modules} that $$\mathscr{D}(H)=\mathscr{D}(T).$$
By the same, for each $X\in\mathscr{D}_{\geq 2}(H)$, we have $$\Pi(H[X])=\Pi(T[X]).$$
Therefore, for each $X\in\mathscr{D}_{\geq 2}(H)$, $T[X]/\Pi(T[X])$ realizes $H[X]/\Pi(H[X])$, that is, 
$$T[X]/\Pi(T[X])\in\mathscr{R}(H[X]/\Pi(H[X])).$$ 
Set $$\mathscr{R}_\mathscr{D}(H)=\bigcup_{X\in\mathscr{D}_{\geq 2}(H)}\mathscr{R}(H[X]/\Pi(H[X])).$$
We denote by $\delta_H(T)$ the function 
\begin{equation*}
\begin{array}{rcl}
\mathscr{D}_{\geq 2}(H)&\longrightarrow&\mathscr{R}_\mathscr{D}(H)\\
Y&\longmapsto&T[Y]/\Pi(T[Y]).
\end{array}
\end{equation*}
Lastly, we denote by 
$\mathscr{Q}(H)$ the set of the functions $f$ from $\mathscr{D}_{\geq 2}(H)$ to 
$\mathscr{R}_\mathscr{D}(H)$ satisfying $f(Y)\in\mathscr{R}(H[Y]/\Pi(H[Y]))$ for each 
$Y\in\mathscr{D}_{\geq 2}(H)$. 
Under this notation, we obtain the function 
\begin{equation*}
\begin{array}{rccl}
 \delta_H:&\mathscr{R}(H)&\longrightarrow&\mathscr{Q}(H)\\
&T&\longmapsto&\delta_H(T).
\end{array}
\end{equation*}
\end{rem}

\begin{prop}\label{construction_realization}
For a 3-uniform hypergraph, $\delta_H$ is a bijection.  
\end{prop}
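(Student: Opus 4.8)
The plan is to show that $\delta_H$ is injective by a "reconstruction" argument, and surjective by a "patching" argument, both running along the modular decomposition tree $\mathscr{D}(H)$ from the leaves up to the root. The key structural fact I would use throughout is that for any realization $T$ of $H$ and any $X\in\mathscr{D}(H)$ we have $T[X]\in\mathscr{R}(H[X])$, together with Theorem~\ref{same_strong_modules}, which gives $\mathscr{D}(T)=\mathscr{D}(H)$ and $\Pi(T[X])=\Pi(H[X])$ for every $X\in\mathscr{D}_{\geq 2}(H)$. So the quotients $T[X]/\Pi(T[X])$ that appear in $\delta_H(T)$ are genuinely defined, and the whole point is that $T$ is determined by, and can be reassembled from, this family of quotients indexed by the internal nodes of the tree.

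For injectivity, suppose $T,T'\in\mathscr{R}(H)$ with $\delta_H(T)=\delta_H(T')$; I would prove $T[X]=T'[X]$ for every $X\in\mathscr{D}(H)$ by induction on $|X|$, the case $|X|=1$ being trivial. For $|X|\geq 2$, the children of $X$ in the tree are exactly the elements of $\Pi(H[X])=\Pi(T[X])=\Pi(T'[X])$, and by the induction hypothesis $T$ and $T'$ agree on each child $Y\in\Pi(H[X])$. It remains to pin down the arcs between distinct children. An arc $yz\in A(T)$ with $y\in Y$, $z\in Z$, $Y\neq Z$ in $\Pi(H[X])$ is governed, since $Y,Z$ are modules of $T[X]$, by the arc $YZ$ of the quotient $T[X]/\Pi(T[X])=\delta_H(T)(X)=\delta_H(T')(X)=T'[X]/\Pi(T'[X])$; hence $yz\in A(T)\iff YZ\in A(T[X]/\Pi(T[X]))\iff yz\in A(T')$. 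This settles $T[X]=T'[X]$, and taking $X=V(H)$ (if $H$ is connected) gives $T=T'$; if $H$ is disconnected one first notes that $\delta_H$ records, via the root label $\medcircle$ and the quotient $H/\Pi(H)$ being empty, enough to glue the components independently, so the same induction applies to the forest. (The sole subtlety here is the case $v(H)\le 1$, or $E(H)=\emptyset$ with no tree structure above singletons, which is handled trivially.)

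For surjectivity, take $f\in\mathscr{Q}(H)$; I would build a tournament $T_f$ on $V(H)$ by declaring, for distinct $x,y\in V(H)$, that $xy\in A(T_f)$ iff $YZ\in A\!\left(f(\widetilde{\{x,y\}}^H)/\!\ldots\right)$, where $\widetilde{\{x,y\}}^H$ is the smallest strong module containing $x$ and $y$ (Notation~\ref{tree_tilde}) and $Y,Z$ are the children of that node containing $x$ and $y$ respectively — this is well defined precisely because $\widetilde{\{x,y\}}^H\in\mathscr{D}_{\geq 2}(H)$ and $x,y$ lie in different children. The substantive work is to verify $C_3(T_f)=H$, which I would again do by induction on the strong modules $X\in\mathscr{D}(H)$: assuming $T_f[Y]\in\mathscr{R}(H[Y])$ for each child $Y\in\Pi(H[X])$, one checks that each $Y$ is a module of $T_f[X]$ with $T_f[X]/\Pi(H[X])=f(X)\in\mathscr{R}(H[X]/\Pi(H[X]))$; then a triple $\{a,b,c\}\subseteq X$ falls into one of the cases "all three in one child", "two in one child, one outside", or "three different children", and in each case the fact that a triangle in $T_f[X]$ corresponds to a triangle in a quotient or in a child — combined with Definition~\ref{quotient_hyper} and the module property — shows $T_f[X][\{a,b,c\}]\cong C_3$ iff $\{a,b,c\}\in E(H[X])$. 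Finally $\delta_H(T_f)=f$ because, by construction, $T_f[X]/\Pi(T_f[X])$ reads off exactly the arcs prescribed by $f(X)$.

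The main obstacle I anticipate is the surjectivity induction, specifically the bookkeeping in the "two children, one vertex outside the module $X$" configuration and the verification that $\Pi(H[X])$ really is a modular partition of the tournament $T_f[X]$ we have just built (rather than only of $H[X]$); this is where one must invoke Theorem~\ref{same_strong_modules}, Proposition~\ref{P1_real_dec}, and the compatibility assertions (A2)–(A5) of the modular covering to be sure that no spurious $3$-cycles are created across the gluing and that the children remain modules. The injectivity direction, by contrast, is essentially a clean two-line induction once the tree-labels and quotients are in place.
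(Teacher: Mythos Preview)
Your proposal is correct and follows essentially the same line as the paper: the paper uses the identical construction $T_f$ via $\widetilde{\{v,w\}}^H$ for surjectivity, and for injectivity argues contrapositively that an arc on which $T$ and $T'$ disagree forces $\delta_H(T)$ and $\delta_H(T')$ to differ at the node $\widetilde{\{v,w\}}^H$, which is just the non-inductive phrasing of your induction.

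One remark: the obstacle you anticipate is not really there. Once you observe directly from the construction that for $Y\in\Pi(H[X])$, any $v\in X\setminus Y$ and any $y,y'\in Y$ satisfy $\widetilde{\{v,y\}}^H=\widetilde{\{v,y'\}}^H=X$ (since no strong module lies strictly between $Y$ and $X$), it is immediate that $Y$ is a module of $T_f[X]$ and that $T_f[X]/\Pi(H[X])=f(X)$; your three-case analysis on triples then goes through cleanly using only Definition~\ref{quotient_hyper} and the module property. You do \emph{not} need Theorem~\ref{same_strong_modules} or Proposition~\ref{P1_real_dec} here, and indeed you cannot invoke them for $T_f$ before knowing $T_f\in\mathscr{R}(H)$, which is precisely what you are proving; the paper's direct verification on $\widetilde{\{u,v,w\}}^H$ avoids this apparent circularity in the same elementary way.
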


\begin{proof}
To begin, we show that $\delta_H$ is injective. 
Let $T$ and $T'$ be distinct realizations of $H$. 
There exist distinct $v,w\in V(H)$ such that $vw\in A(T)$ and $wv\in A(T')$. 
Consider $Z_v,Z_w\in\Pi(H[\widetilde{\{v,w\}}^H])$ (see Notation~\ref{tree_tilde}) such that $v\in Z_v$ and $w\in Z_w$. 
Since $\widetilde{\{v,w\}}^H$ is the smallest strong module of $H$ containing $\{v,w\}$, we obtain 
$Z_v\neq Z_w$. 
It follows from Theorem~\ref{same_strong_modules} that 
$\Pi(H[\widetilde{\{v,w\}}^H])=\Pi(T[\widetilde{\{v,w\}}^H])$ and $\Pi(H[\widetilde{\{v,w\}}^H])=\Pi(T'[\widetilde{\{v,w\}}^H])$. 
Since $vw\in A(T)$ and $wv\in A(T')$, we obtain 
\begin{equation*}
\begin{cases}
Z_vZ_w\in A(T[\widetilde{\{v,w\}}^H]/\Pi(T[\widetilde{\{v,w\}}^H]))\\
\text{and}\\
Z_wZ_v\in A(T'[\widetilde{\{v,w\}}^H]/\Pi(T'[\widetilde{\{v,w\}}^H])). 
\end{cases}
\end{equation*}
Consequently, $\delta_H(T)(\widetilde{\{v,w\}}^H)\neq\delta_H(T')(\widetilde{\{v,w\}}^H)$. 
Thus, $\delta_H(T)\neq\delta_H(T')$. 

Now, we prove that $\delta_H$ is surjective. 
Consider $f\in\mathscr{Q}(H)$, that is, $f$ is a function from $\mathscr{D}_{\geq 2}(H)$ to 
$\mathscr{R}_\mathscr{D}(H)$ satisfying $f(Y)\in\mathscr{R}(H[Y]/\Pi(H[Y]))$ for each 
$Y\in\mathscr{D}_{\geq 2}(H)$. 
We construct $T\in\mathscr{R}(H)$ such that $\delta_H(T)=f$ in the following manner. 
Consider distinct vertices $v$ and $w$ of $H$. 
Clearly, $\widetilde{\{v,w\}}^H$ is a strong module of $H$ such that $|\widetilde{\{v,w\}}^H|\geq 2$. 
There exist $Z_v,Z_w\in\Pi(H[\widetilde{\{v,w\}}^H])$ such that $v\in Z_v$ and $w\in Z_w$. 
Since $\widetilde{\{v,w\}}^H$ is the smallest strong module of $H$ containing $v$ and $w$, we obtain 
$Z_v\neq Z_w$. 
Set 
\begin{equation}\label{E1_construction_realization}
\begin{cases}
\text{$vw\in A(T)$ if $Z_vZ_w\in A(f(\widetilde{\{v,w\}}^H))$,}\\ 
\text{and}\\ 
\text{$wv\in A(T)$ if $Z_wZ_v\in A(f(\widetilde{\{v,w\}}^H))$. }
\end{cases}
\end{equation}
We obtain a tournament $T$ defined on $V(H)$. 

Lastly, we verify that $T$ realizes $H$. 
First, consider distinct vertices $u,v,w$ of $H$ such that $uvw\in E(H)$. 
There exist $Z_u,Z_v,Z_w\in\Pi(H[\widetilde{\{u,v,w\}}^H])$ such that $u\in Z_u$, $v\in Z_v$ and $w\in Z_w$. 
For a contradiction, suppose that $Z_u=Z_v$. 
Since $Z_u$ is a module of $H$ and $uvw\in E(H)$, we get $w\in Z_u$. 
Thus, $Z_u=Z_v=Z_w$, which contradicts the fact that $\widetilde{\{u,v,w\}}^H$ is the smallest strong module of $H$ containing $u,v$ and $w$. 
It follows that $Z_u\neq Z_v$. 
Similarly, we have $Z_u\neq Z_w$ and $Z_v\neq Z_w$. 
It follows that $Z_uZ_vZ_w\in E(H[\widetilde{\{u,v,w\}}^H]/\Pi(H[\widetilde{\{u,v,w\}}^H]))$. 
Since $f(\widetilde{\{u,v,w\}}^H)$ realizes $H[\widetilde{\{u,v,w\}}^H]/\Pi(H[\widetilde{\{u,v,w\}}^H])$, we obtain 
$Z_uZ_v,Z_vZ_w,$ $Z_wZ_u\in A(f(\widetilde{\{u,v,w\}}^H))$ or 
$Z_uZ_w,Z_wZ_v,Z_vZ_u\in A(f(\widetilde{\{u,v,w\}}^H))$. 
By exchanging $u$ and $v$ if necessary, assume that 
$$Z_uZ_v,Z_vZ_w,Z_wZ_u\in A(f(\widetilde{\{u,v,w\}}^H)).$$ 
Since $Z_u\neq Z_v$, we obtain $\widetilde{\{u,v\}}^H=\widetilde{\{u,v,w\}}^H$. 
Similarly, we have $\widetilde{\{u,w\}}^H=\widetilde{\{u,v,w\}}^H$ and $\widetilde{\{v,w\}}^H=\widetilde{\{u,v,w\}}^H$. 
It follows from \eqref{E1_construction_realization} that $uv,vw,wu\in A(T)$. 
Hence, $T[\{u,v,w\}]$ is a 3-cycle. 

Conversely, consider distinct vertices $u,v,w$ of $T$ such that $T[\{u,v,w\}]$ is a 3-cycle. 
There exist $Z_u,Z_v,Z_w\in\Pi(H[\widetilde{\{u,v,w\}}^H])$ such that $u\in Z_u$, $v\in Z_v$ and $w\in Z_w$. 
For a contradiction, suppose that $Z_u=Z_v$. 
Since $\widetilde{\{u,v,w\}}^H$ is the smallest strong module of $H$ containing $u,v$ and $w$, we obtain $Z_u\neq Z_w$. 
Therefore, we have $\widetilde{\{u,w\}}^H=\widetilde{\{u,v,w\}}^H$ and $\widetilde{\{v,w\}}^H=\widetilde{\{u,v,w\}}^H$. 
For instance, assume that $Z_uZ_w\in A(f(\widetilde{\{u,v,w\}}^H))$. 
It follows from \eqref{E1_construction_realization} that $uw,vw\in A(T)$, which contradicts the fact that $T[\{u,v,w\}]$ is a 3-cycle. 
Consequently, $Z_u\neq Z_v$. 
It follows that $\widetilde{\{u,v\}}^H=\widetilde{\{u,v,w\}}^H$. 
Similarly, we have $\widetilde{\{u,w\}}^H=\widetilde{\{u,v,w\}}^H$ and $\widetilde{\{v,w\}}^H=\widetilde{\{u,v,w\}}^H$. 
For instance, assume that $uv,vw,wu\in A(T)$. 
It follows from \eqref{E1_construction_realization} that 
$Z_uZ_v,Z_vZ_w,Z_wZ_u\in A(f(\widetilde{\{u,v,w\}}^H))$. 
Since $f(\widetilde{\{u,v,w\}}^H)$ realizes $H[\widetilde{\{u,v,w\}}^H]/\Pi(H[\widetilde{\{u,v,w\}}^H])$, we obtain 
$Z_uZ_vZ_w\in E(H[\widetilde{\{u,v,w\}}^H]/\Pi(H[\widetilde{\{u,v,w\}}^H]))$. 
It follows that $uvw\in E(H[\widetilde{\{u,v,w\}}^H])$, and hence $uvw\in E(H)$. 

Consequently, $T\in\mathscr{R}(H)$. 
Let $X\in\mathscr{D}_{\geq 2}(H)$. 
As seen at the beginning of Remark~\ref{rem_theta}, we have $\Pi(H[X])=\Pi(T[X])$, and $$T[X]/\Pi(T[X])\in\mathscr{R}(H[X]/\Pi(H[X])).$$ 
Consider distinct elements $Y$ and $Z$ of $\Pi(H[X])$. 
For instance, assume that $YZ\in A(T[X]/\Pi(T[X]))$. 
Let $v\in Y$ and $w\in Z$. 
We obtain $vw\in A(T)$. 
Moreover, we have $\widetilde{\{v,w\}}^H=X$ because $Y,Z\in\Pi(H[X])$ and $Y\neq Z$. 
It follows from \eqref{E1_construction_realization} that 
$YZ\in A(f(X))$. 
Therefore, 
\begin{equation}\label{E2_construction_realization}
T[X]/\Pi(T[X])=f(X). 
\end{equation}
Since \eqref{E2_construction_realization} holds for every $X\in\mathscr{D}_{\geq 2}(H)$, we have 
$\delta_H(T)=f$. 
\end{proof}

Theorem~\ref{realiza_dec} is an easy consequence of Proposition~\ref{construction_realization}.

\begin{proof}[Proof of Theorem~\ref{realiza_dec}]
Clearly, if $H$ is realizable, then $H[W]$ is also for every $W\subseteq V(H)$. Conversely, suppose that $H[W]$ is realizable for every $W\subseteq V(H)$ such that $H[W]$ is prime. 
We define an element $f$ of $\mathscr{Q}(H)$ as follows. 
Consider $Y\in\mathscr{D}_{\geq 2}(H)$. 
By Theorem~\ref{Thbis_Gallai}, $H[Y]/\Pi(H[Y])$ is empty or prime. 
First, suppose that $H[Y]/\Pi(H[Y])$ is empty. 
We choose for $f(Y)$ any linear order defined on $\Pi(H[Y])$. 
Clearly, $f(Y)\in\mathscr{R}(H[Y]/\Pi(H[Y]))$. 
Second, suppose that $H[Y]/\Pi(H[Y])$ is prime. 
Consider a transverse $W$ of $\Pi(H[Y])$ (see Definition~\ref{defi_transverse}). 
The function 
\begin{equation*}
\begin{array}{rccl}
\theta_W&W:&\longrightarrow&\Pi(H[Y])\\
&w&\longmapsto&\text{$Z$, where $w\in Z$,}
\end{array}
\end{equation*}
is an isomorphism from $H[W]$ onto $H[Y]/\Pi(H[Y])$. 
Thus, $H[W]$ is prime. 
By hypothesis, $H[W]$ admits a realization $T_W$. 
We choose for $f(Y)$ the unique tournament defined on $\Pi(H[Y])$ such that $\theta_W$ is an isomorphism from $T_W$ onto $f(Y)$. 
Clearly, $f(Y)\in\mathscr{R}(H[Y]/\Pi(H[Y]))$. 

By Proposition~\ref{construction_realization}, $(\delta_H)^{-1}(f)$ is a realization of $H$. 
\end{proof}

Theorem~\ref{realiza_dec} leads us to study the realization of prime and 3-uniform hypergraphs. 
We need to introduce the analogue of Defintion~\ref{nota_critical} for 3-uniform hypergraphs. 

\begin{defi}\label{nota_critical_hyper}
Given a prime and 3-uniform hypergraph $H$, a vertex $v$ of $H$ is {\em critical} if $H-v$ is decomposable. 
A prime and 3-uniform hypergraph is {\em critical} if all its vertices are critical. 
\end{defi}

For critical and 3-uniform hypergraphs, we obtain the following characterization, which is an immediate consequence of Theorems~\ref{same_primality} and~\ref{th1 sch trott}. 

\begin{thm}\label{realiza_critical}
Given a critical and 3-uniform hypergraph $H$, $H$ is realizable if and only if $v(H)$ is odd, and $H$ is isomorphic to $C_3(T_{v(H)})$, $C_3(U_{v(H)})$ or $C_3(W_{v(H)})$. 
\end{thm}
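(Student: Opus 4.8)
The plan is to reduce the statement to Theorems~\ref{same_primality} and~\ref{th1 sch trott} via a short bridge observation: for a critical $3$-uniform hypergraph $H$, criticality is an invariant that transfers to any realization. First I would establish the forward implication. Suppose $H$ is realizable and let $T$ be a realization. Since $H$ is prime, Theorem~\ref{same_primality} gives that $T$ is prime. The key point is then to show $T$ is \emph{critical}. For each vertex $v$, $H-v$ is decomposable; since $C_3(T-v)=C_3(T)-v=H-v$, the tournament $T-v$ is a realization of the decomposable hypergraph $H-v$. A decomposable $3$-uniform hypergraph cannot have an indecomposable realization: indeed, if $T-v$ were prime then by Theorem~\ref{same_primality} applied to $H-v$ and its realization $T-v$ we would get $H-v$ prime, a contradiction — but to invoke that theorem I need $H-v$ realizable, which it is, being realized by $T-v$. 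Hence $T-v$ is decomposable for every $v$, so $T$ is a critical tournament. Since $v(T)=v(H)\geq 5$ (critical $3$-uniform hypergraphs have at least $5$ vertices, mirroring the tournament case), Theorem~\ref{th1 sch trott} forces $v(T)$ odd and $T$ isomorphic to $T_{v(H)}$, $U_{v(H)}$ or $W_{v(H)}$. Applying $C_3(\cdot)$ through the isomorphism gives $H=C_3(T)$ isomorphic to $C_3(T_{v(H)})$, $C_3(U_{v(H)})$ or $C_3(W_{v(H)})$, with $v(H)$ odd.

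For the converse, suppose $v(H)$ is odd and $H$ is isomorphic to $C_3(\tau)$ where $\tau\in\{T_{v(H)},U_{v(H)},W_{v(H)}\}$. Then $\tau$ is a realization of a hypergraph isomorphic to $H$, so $H$ is realizable. There is essentially nothing further to prove on this side beyond noting that realizability is preserved under isomorphism.

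The only genuine subtlety — and the step I'd be most careful about — is the two-sided use of Theorem~\ref{same_primality} on the subhypergraphs $H-v$: I must check that $H-v$ is still $3$-uniform (immediate, since deleting a vertex keeps all surviving edges $3$-element) and that it is realizable (witnessed by $T-v$), so that the theorem applies and lets me conclude $T-v$ decomposable $\iff H-v$ decomposable. A secondary point to nail down cleanly is the lower bound $v(H)\geq 5$ needed to invoke Theorem~\ref{th1 sch trott}: this follows because the smallest prime tournament has $4$ vertices and is not critical, and any prime $3$-uniform hypergraph $H$ on $4$ vertices has a prime realization $T$ on $4$ vertices by Theorem~\ref{same_primality}, which is then not critical, so $H$ is not critical either — hence a critical $3$-uniform hypergraph has at least $5$ vertices. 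Everything else is bookkeeping.
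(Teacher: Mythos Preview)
Your approach is correct and matches the paper's, which gives no detailed proof but simply declares the result an immediate consequence of Theorems~\ref{same_primality} and~\ref{th1 sch trott}: you transfer primality and criticality from $H$ to a realization $T$ vertex by vertex via Theorem~\ref{same_primality}, then invoke the Schmerl--Trotter classification.

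One factual slip in your side remark: the $3$-cycle $C_3$ is a prime tournament on $3$ vertices, so it is not true that ``the smallest prime tournament has $4$ vertices.'' This does not damage your main argument, but your justification of $v(H)\geq 5$ should be replaced. A clean direct argument: if $v(H)=3$ then $H-v$ has $2$ vertices and is automatically indecomposable, so no vertex is critical; if $v(H)=4$ then each $H-v$ is a $3$-uniform hypergraph on $3$ vertices, which is decomposable only when empty, forcing every edge of $H$ to contain every vertex---impossible for $3$-element edges in a $4$-element set---so $H$ would be empty and hence not prime. Thus any critical $3$-uniform hypergraph has $v(H)\geq 5$, and Theorem~\ref{th1 sch trott} applies to $T$ as you intend.
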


We pursue with the characterization of non critical, prime and 3-uniform hypergraphs that are realizable. 
We need the following notation. 

\begin{nota}\label{nota_realiza_non_critical}
Let $H$ be a 3-uniform hypergraph. Consider a vertex $x$ of $H$. 
Set $$V_x=V(H)\setminus\{x\}.$$
We denote by $G_x$ the graph defined on 
$V_x$ as follows. 
Given distinct elements $v$ and $w$ of $V_x$, 
$$\text{$vw\in E(G_x)$ if $xvw\in E(H)$\ \  (note that the graph $G_x$ is used in~\cite{FF84}) }.$$
Also, we denote by $I_x$ the set of the isolated vertices of $G_x$. 
Lastly, suppose that $H-x$ admits a realization $T_x$. 
Consider a bipartition $P$ of $V_x\setminus I_x$. 
Denote one element of $P$ by $X^-$, and the other one by $X^+$. 
Now, denote by $Y^-$ the set of $v\in I_x$ such that there exists a sequence $v_0,\ldots,v_n$ satisfying 
\begin{itemize}
\item $v_0\in X^-$;
\item $v_n=v$;
\item $v_1,\ldots,v_n\in I_x$;
\item for $i=0,\ldots,n-1$, $v_iv_{i+1}\in A(T_x)$. 
\end{itemize}
Dually, denote by $Y^+$ the set of $v\in I_x$ such that there exists a sequence $v_0,\ldots,v_n$ satisfying 
\begin{itemize}
\item $v_0\in X^+$;
\item $v_n=v$;
\item $v_1,\ldots,v_n\in I_x$;
\item for $i=0,\ldots,n-1$, $v_{i+1}v_i\in A(T_x)$. 
\end{itemize}
\end{nota}

\begin{thm}\label{realiza_non_critical}
Let $H$ be a non critical, prime, and 3-uniform hypergraph. Consider a vertex $x$ of $H$ such that $H-x$ is prime. Suppose that $H-x$ admits a realization $T_x$. 
Then, $H$ is realizable if and only if the following two assertions hold. 
\begin{enumerate}
\item[(M1)] There exists a bipartition $\{X^-,X^+\}$ of $V_x\setminus I_x$ satisfying
\begin{itemize}
\item for each component $C$ of $G_x$, with $v(C)\geq 2$, $C$ is bipartite with bipartition 
$\{X^-\cap V(C),X^+\cap V(C)\}$;
\item for $v^-\in X^-$ and $v^+\in X^+$, we have 
\begin{equation}\label{E0_realiza_non_critical}
\text{$v^-v^+\in E(G_x)$ if and only if $v^-v^+\in A(T_x)$.}
\end{equation}
\end{itemize}
\item[(M2)] We have $Y^-\cap Y^+=\emptyset$ and $Y^-\cup Y^+=I_x$. Furthermore, for $x^-\in X^-$, $x^+\in X^+$, $y^-\in Y^-$ and $y^+\in Y^+$, we have $y^+x^-,x^+y^-,y^+y^-\in A(T_x)$. 
\end{enumerate}
Moreover, if $H$ is realizable, then there exists a unique realization $T$ of $H$ such that $T-x=T_x$. 
Precisely, suppose that there exists a realization $T$ of $H$ such that $T-x=T_x$. 
For $x^-\in X^-$, $x^+\in X^+$, $y^-\in Y^-$ and $y^+\in Y^+$, we have $xx^-,x^+x,xy^-,y^+x\in A(T)$.
\end{thm}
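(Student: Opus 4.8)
The plan is to establish the last assertion of the theorem (uniqueness of the extension of $T_x$, together with its arcs at $x$) and the necessity of (M1)--(M2) simultaneously, and to obtain sufficiency from an explicit construction. First, if $T\in\mathscr{R}(H)$ then $T-x$ realizes $H-x$; since $H-x$ is prime and realizable, Theorem~\ref{same_primality} shows that the given $T_x$ is a prime tournament, so Theorem~\ref{thm_bilt} yields $T-x\in\{T_x,T_x^\star\}$, and as $C_3(T^\star)=C_3(T)$ we may replace $T$ by $T^\star$ if needed and assume $T-x=T_x$. Hence it is enough to analyse the realizations $T$ of $H$ with $T-x=T_x$: there is one whenever $H$ is realizable, the realizations of $H$ come in dual pairs, and the uniqueness assertion will follow once every arc of $T$ at $x$ is shown to be forced. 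Note also that $H$ prime forces $T$ prime by Theorem~\ref{same_primality}, so $T$ and $T_x$ are strongly connected.

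\emph{The local rule and (M1).} Fix such a $T$. For distinct $v,w\in V_x$ with $vw\in A(T_x)$, the triple $\{x,v,w\}$ induces a $3$-cycle in $T$ exactly when that cycle is $x\to v\to w\to x$, i.e. exactly when $v\in N_T^+(x)$ and $w\in N_T^-(x)$; since $H=C_3(T)$, this says that $vw\in E(G_x)$ if and only if $v\in N_T^+(x)$ and $w\in N_T^-(x)$. From this I would read off: $G_x$ has no edge inside $N_T^+(x)$ or inside $N_T^-(x)$, so every nontrivial component $C$ of $G_x$ is bipartite with parts $N_T^+(x)\cap V(C)$ and $N_T^-(x)\cap V(C)$; moreover, if $v\in I_x$ then $wv\in A(T_x)$ for every $w\in N_T^-(x)$, and dually every $w\in N_T^-(x)\cap I_x$ beats all of $N_T^+(x)$ in $T_x$. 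Setting $X^-:=N_T^+(x)\cap(V_x\setminus I_x)$ and $X^+:=N_T^-(x)\cap(V_x\setminus I_x)$, the first bullet of (M1) holds since the components of $G_x$ are bipartite as above, and the second by applying the local rule to $v^-\in X^-\subseteq N_T^+(x)$ and $v^+\in X^+\subseteq N_T^-(x)$, in either orientation of the pair $v^-v^+$.

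\emph{Isolated vertices, (M2) and uniqueness.} Next I would prove $Y^-=N_T^+(x)\cap I_x$ and $Y^+=N_T^-(x)\cap I_x$. The inclusions $\subseteq$ follow by propagating along the defining sequences of Notation~\ref{nota_realiza_non_critical}: each consecutive triple $\{x,v_i,v_{i+1}\}$ has a vertex in $I_x$, hence is transitive in $T$, which forces $x\to v_{i+1}$ (resp. $v_{i+1}\to x$); in particular $Y^-\cap Y^+=\emptyset$. For $\supseteq$, the local rule gives that $N_T^-(x)$ dominates $N_T^+(x)\cap I_x$ and $N_T^-(x)\cap I_x$ dominates all of $N_T^+(x)$ in $T_x$, so inside $T_x[I_x]$ the part $N_T^-(x)\cap I_x$ completely dominates $N_T^+(x)\cap I_x$; combining this with strong connectivity of $T_x$ and with the transitivity of triples $\{x,a,b\}$ having a vertex in $I_x$, one shows that every $v\in N_T^+(x)\cap I_x$ is reachable in $T_x$ from $X^-$ by a directed path whose vertices other than the first lie in $I_x$, that is $v\in Y^-$, and dually for $Y^+$. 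Thus $Y^-\cup Y^+=I_x$, and $X^-,X^+,Y^-,Y^+$ are exactly $N_T^+(x)\cap(V_x\setminus I_x)$, $N_T^-(x)\cap(V_x\setminus I_x)$, $N_T^+(x)\cap I_x$, $N_T^-(x)\cap I_x$. The remaining relations of (M2) drop out of the transitive triples $\{x,y^+,x^-\}$, $\{x,x^+,y^-\}$, $\{x,y^+,y^-\}$ (each has a vertex in $I_x$, so is not an edge of $H$, so is transitive in $T$ with $x$ in the middle), and $xx^-,x^+x,xy^-,y^+x\in A(T)$ hold by the definitions of $X^\pm,Y^\pm$. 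Since all arcs at $x$ are now determined, there is at most one realization with $T-x=T_x$.

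\emph{Sufficiency and the main difficulty.} Conversely, given (M1)--(M2), I would define $T$ by $T-x=T_x$, $x\to v$ for $v\in X^-\cup Y^-$, and $v\to x$ for $v\in X^+\cup Y^+$; this is a tournament because $X^-\cup X^+=V_x\setminus I_x$ by (M1) and $Y^-\cup Y^+=I_x$ with $Y^-\cap Y^+=\emptyset$ by (M2). To check $C_3(T)=H$: a triple avoiding $x$ is a $3$-cycle of $T$ iff it is an edge of $H-x$, since $T-x=T_x$ realizes $H-x$; for a triple $\{x,v,w\}$ one runs through the cases according to the sides of $v$ and $w$. If $v,w$ lie on the same side ($X^-\cup Y^-$ or $X^+\cup Y^+$), then $x$ is a source or a sink, so the triple is transitive, and $vw\notin E(G_x)$ either by the first bullet of (M1) or because one of $v,w$ lies in $I_x$, so $\{x,v,w\}\notin E(H)$. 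If $v\in X^-$ and $w\in X^+$, the triple is a $3$-cycle iff $vw\in A(T_x)$, iff $vw\in E(G_x)$ by the second bullet of (M1), iff $\{x,v,w\}\in E(H)$. If exactly one of $v,w$ lies in $Y^-\cup Y^+$, the pertinent arc relation of (M2) makes $\{x,v,w\}$ transitive in $T$, matching $vw\notin E(G_x)$ (as $Y^-,Y^+\subseteq I_x$) and $\{x,v,w\}\notin E(H)$. Hence $T\in\mathscr{R}(H)$, so $H$ is realizable, and with the uniqueness above this $T$ is the unique realization with $T-x=T_x$, with the stated arcs at $x$. The hard part will be the inclusion $N_T^+(x)\cap I_x\subseteq Y^-$ (and its dual) used in the third paragraph---showing that every isolated out-neighbour of $x$ genuinely is reachable from $X^-$ through isolated vertices---which appears to need strong connectivity of $T_x$ (and of $T$) together with a careful analysis of the domination structure of $T_x[I_x]$ coming from the local rule; everything else is bookkeeping on $3$-element subsets.
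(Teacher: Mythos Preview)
Your overall strategy matches the paper's proof, and most of the verification is correct. The genuine gap is exactly where you flag it: establishing $N_T^+(x)\cap I_x\subseteq Y^-$ (and its dual). Strong connectivity of $T_x$ and of $T$, even combined with the local rule, does not suffice. Consider $V_x=\{a,b,c\}$ with $T_x$ the $3$-cycle $a\to b\to c\to a$, $G_x$ consisting of the single edge $ab$ (so $I_x=\{c\}$, $X^-=\{a\}$, $X^+=\{b\}$), and extend to $T$ by $x\to a$, $b\to x$, $x\to c$. Then $T_x$ is prime, $T$ is strongly connected, and every triple through $x$ with a vertex in $I_x$ is transitive, yet $c\notin Y^-\cup Y^+$. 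Of course here $\{x,c\}$ is a module of $T$, so $T$ is not prime and $H=C_3(T)$ is not prime either; but this is precisely the point: your reachability sketch uses only a consequence of primality (strong connectivity) that is strictly weaker than what the conclusion requires.

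The paper closes this gap with a module argument rather than a reachability argument. Set $W:=I_x\setminus(Y^-\cup Y^+)$. Directly from the definitions of $Y^-$ and $Y^+$, every $w\in W$ beats all of $X^-\cup Y^-$ and is beaten by all of $X^+\cup Y^+$ in $T_x$ (otherwise the witnessing path to some vertex of $Y^\pm$ could be extended by one step to $w$). You have already shown that $x$ beats $X^-\cup Y^-$ and is beaten by $X^+\cup Y^+$ in $T$. Hence every vertex outside $\{x\}\cup W$ is related to all of $\{x\}\cup W$ in the same direction, so $\{x\}\cup W$ is a module of $T$; primality of $T$ then forces $W=\emptyset$, i.e.\ $Y^-\cup Y^+=I_x$. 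Once you substitute this for the strong-connectivity sketch, the remainder of your proof is correct and essentially identical to the paper's. (A minor omission: your sufficiency case analysis skips the pair with $v\in Y^-$ and $w\in Y^+$ on opposite sides; the relation $y^+y^-\in A(T_x)$ from (M2) handles it in the same way as the mixed $X/Y$ cases.)
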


\begin{proof}
To begin, suppose that $H$ admits a realization $T$. 
Clearly, $T-x$ is a realization of $H-x$. 
Since $T_x$ is a realization of $H-x$, we have $C_3(T_x)=C_3(T-x)$. 
Since $H-x$ is prime, it follows from Theorem~\ref{same_primality} that $T-x$ is prime as well. 
Since $C_3(T_x)=C_3(T-x)$, it follows from Theorem~\ref{thm_bilt} that $T_x=T-x$ or $(T-x)^\star$. 
By exchanging $T$ and $T^\star$ if necessary, we can assume that $$T_x=T-x.$$

We show that $G_x$ is bipartite. 
Consider a sequence $(v_0,\ldots,v_{2n})$ of distinct elements of $V_x$, where $n\geq 2$,  such that 
$v_iv_{i+1}\in E(G_x)$ for every $0\leq i\leq 2n-1$. 
For instance, assume that $v_0v_1\in A(T_x)$. Hence $v_0v_1\in A(T)$. 
Since $v_0v_1\in E(G_x)$, we have $xv_0v_1\in E(H)$. 
Since $T$ is a realization of $H$, with $v_0v_1\in A(T)$, we obtain 
$xv_0,v_1x\in A(T)$. 
Since $v_1v_2\in E(G_x)$, we have $xv_1v_2\in E(H)$. 
Since $T$ is a realization of $H$, with $v_1x\in A(T)$, we obtain 
$xv_2,v_2v_1\in A(T)$. 
By continuing this process, we obtain 
\begin{equation*}
\begin{cases}
xv_0,xv_2,\ldots,xv_{2n}\in A(T)\\
\text{and}\\
v_1x,\ldots,v_{2n-1}x\in A(T).
\end{cases}
\end{equation*}
Thus $xv_0,xv_{2n}\in A(T)$, and hence $T[\{x,v_0,v_{2n}\}]$ is not a 3-cycle. 
It follows that $xv_0v_{2n}\not\in E(H)$, so $v_0v_{2n}\not\in E(G_x)$. 
Therefore, $G_x$ does not contain odd cycles. 

For Assertion (M1), consider a component $C$ of $G_x$ such that $v(C)\geq 2$. 
Consider distinct vertices $c_0,c_1,c_2$ of $C$ such that $c_0c_1,c_1c_2\in E(G_x)$. 
We show that 
\begin{equation}\label{E1_realiza_non_critical}
\begin{cases}
c_0c_1,c_2c_1\in A(T_x)\\
\text{or}\\
c_1c_0,c_1c_2\in A(T_x).
\end{cases}
\end{equation}
Otherwise, suppose that $c_0c_1,c_1c_2\in A(T_x)$. 
Since $c_0c_1,\in E(G_x)$, $T[\{x,c_0,c_1\}]$ is a 3-cycle. 
Hence, $xc_0,c_1x\in A(T_x)$ because $c_0c_1,\in A(T_x)$. 
We obtain $c_1c_2,c_1x\in A(T_x)$. 
Thus, $T[\{x,c_1,c_2\}]$ is not a 3-cycle, which contradicts $c_1c_2\in E(G_x)$. 
It follows that \eqref{E1_realiza_non_critical} holds. 
Now, denote by $V(C)^-$ the set of the vertices $c^-$ of $C$ such that there exists $c^+\in V(C)$ satisfying 
$c^-c^+\in E(G_x)$ and $c^-c^+\in A(T_x)$. 
Dually, denote by $V(C)^+$ the set of the vertices $c^+$ of $C$ such that there exists $c^-\in V(C)$ satisfying 
$c^-c^+\in E(G_x)$ and $c^-c^+\in A(T_x)$. 
Since $C$ is a component of $G_x$, we have $V(C)=V(C)^-\cup V(C)^+$. 
Moreover, it follows from \eqref{E1_realiza_non_critical} that $$V(C)^-\cap V(C)^+=\emptyset.$$
Also, it follows from the definition of $V(C)^-$ and $V(C)^+$ that $V(C)^-$ and $V(C)^+$ are stable subsets of $C$. 
Therefore, $C$ is bipartite with bipartition $\{V(C)^-,V(C)^+\}$. 
Set 
$$X^-=\bigcup_{C\in\mathfrak{C}(G_x)}V(C)^-\ \text{and}\ X^+=\bigcup_{C\in\mathfrak{C}(G_x)}V(C)^+\ \text{(see Notation~\ref{set_of_comp})}.$$
Clearly, $\{X^-,X^+\}$ is a bipartition of $V_x\setminus I_x$. 
Consider again a component $C$ of $G_x$ such that $v(C)\geq 2$. 
Since $V(C)^-=X^-\cap V(C)$ and $V(C)^+=X^+\cap V(C)$, 
$C$ is bipartite with bipartition $\{X^-\cap V(C),X^+\cap V(C)\}$. 
To prove that \eqref{E0_realiza_non_critical} holds, consider $v^-\in X^-$ and $v^+\in X^+$. 
First, suppose that $v^-v^+\in E(G_x)$. 
Hence, $v^-$ and $v^+$ belong to the same component of $G_x$. 
Denote it by $C$. 
We obtain $v^-\in V(C)^-$ and $v^+\in V(C)^+$. 
By definition of $V(C)^-$, there exists $c^+\in V(C)$ such that $v^-c^+\in E(G_x)$ and $v^-c^+\in A(T_x)$. 
Since $v^-v^+\in E(G_x)$, it follows from \eqref{E1_realiza_non_critical} that $v^-v^+\in A(T_x)$. 
Second, suppose that $v^-v^+\in A(T_x)$. 
Since $v^-\in V(C)^-$, 
there exists $c^+\in V(C)$ such that $v^-c^+\in E(G_x)$ and $v^-c^+\in A(T_x)$. 
It follows that $xv^-, c^+x\in A(T)$. 
Similarly, since $v^+\in V(C)^+$, there exists $c^-\in V(C)$ such that $xc^-,v^+x\in A(T)$. 
We obtain $xv^-,v^-v^+,v^+x\in A(T)$. 
Thus, $T[\{x,v^-,v^+\}]$ is a 3-cycle, so $v^-v^+\in E(G_x)$.

For Assertion (M2), consider $x^-\in X^-$. 
Denote by $C$ the component of $G_x$ such that $x^-\in V(C)$. 
We have $x^-\in V(C)^-$. 
Therefore, there exists $c^+\in V(C)$ satisfying 
$x^-c^+\in E(G_x)$ and $x^-c^+\in A(T_x)$. 
Since $T[\{x,x^-,c^+\}]$ is a 3-cycle and $x^-c^+\in A(T)$, we get $xx^-\in A(T)$. 
Hence, 
\begin{equation}\label{E2_realiza_non_critical}
xx^-\in A(T)\ \text{for every $x^-\in X^-$. }
\end{equation}
Dually, we have 
\begin{equation}\label{E3_realiza_non_critical}
x^+x\in A(T)\ \text{for every $x^+\in X^+$. }
\end{equation}
Now, 
consider $y^-\in Y^-$. 
There exists a sequence $v_0,\ldots,v_n$ satisfying $v_0\in X^-$, $v_n=y^-$, $v_1,\ldots,v_n\in I_x$, and 
$v_iv_{i+1}\in A(T_x)$ for $i=0,\ldots,n-1$. 
We show that $xv_i\in A(T_x)$ by induction on $i=0,\ldots,n$. 
By \eqref{E2_realiza_non_critical}, this is the case when $i=0$. 
Consider $i\in\{0,\ldots,n-1\}$ and suppose that $xv_i\in A(T_x)$. 
Since $v_{i+1}\in I_x$, $v_iv_{i+1}\not\in E(G_x)$. 
Thus $T[\{x,v_i,v_{i+1}\}]$ is a linear order. 
Since $xv_i,v_iv_{i+1}\in A(T_x)$, we obtain $xv_{i+1}\in A(T_x)$. 
It follows that 
\begin{equation}\label{E4_realiza_non_critical}
xy^-\in A(T)
\end{equation} 
for every $y^-\in Y^-$. 
Dually, we have 
\begin{equation}\label{E5_realiza_non_critical}
y^+x\in A(T)
\end{equation} 
for every $y^+\in Y^+$. 
It follows from \eqref{E4_realiza_non_critical} and \eqref{E5_realiza_non_critical} that $Y^-\cap Y^+=\emptyset$. 
By definition of $Y^-$ and $Y^+$, $Y^-\subseteq I_x$ and $Y^+\subseteq I_x$. 
Set $$W=I_x\setminus(Y^-\cup Y^+).$$
Let $w\in W$. 
Since $w\not\in Y^-$, we have $wz^-\in A(T_x)$ for every $z^-\in X^-\cup Y^-$. 
Therefore, 
\begin{equation}\label{E6_realiza_non_critical}
wz^-\in A(T_x)
\end{equation} 
for $w\in W$ and $z^-\in X^-\cup Y^-$. 
Dually, 
\begin{equation}\label{E7_realiza_non_critical}
z^+w\in A(T_x)
\end{equation} 
for $w\in W$ and $z^+\in X^+\cup Y^+$. 
It follows from \eqref{E2_realiza_non_critical}, \eqref{E3_realiza_non_critical}, \eqref{E4_realiza_non_critical}, 
\eqref{E5_realiza_non_critical}, \eqref{E6_realiza_non_critical} and \eqref{E7_realiza_non_critical} that  
$\{x\}\cup W$ is a module of $T$. 
Since $H$ is prime, it follows from Theorem~\ref{same_primality} that $T$ is prime as well. 
Therefore, $W=\emptyset$, so $$Y^-\cup Y^+=I_x.$$
It follows also from \eqref{E2_realiza_non_critical}, \eqref{E3_realiza_non_critical}, \eqref{E4_realiza_non_critical} and \eqref{E5_realiza_non_critical} that $T$ is the unique realization of $H$ such that $T-x=T_x$. 
To conclude, consider $x^-\in X^-$, $x^+\in X^+$, $y^-\in Y^-$ and $y^+\in Y^+$. 
Since $y^-\not\in Y^+$, $x^+y^-\in A(T_x)$. 
Dually, we have $y^+x^-\in A(T_x)$. 
It follows from \eqref{E4_realiza_non_critical} and \eqref{E5_realiza_non_critical} that 
$y^+x,xy^-\in A(T_x)$. 
Since $y^-,y^+\in I_x$, $y^-y^+\not\in E(G_x)$. 
Thus, $T[\{x,y^-,y^+\}]$ is a linear order. 
Consequently, we have $y^+y^-\in A(T_x)$. 

Conversely, suppose that Assertions (M1) and (M2) hold. 
Let $T$ be the tournament defined on $V(H)$ by 
\begin{equation}\label{E8_realiza_non_critical}
\begin{cases}
T-x=T_x,\\
\text{for every $z^-\in X^-\cup Y^-$, $xz^-\in A(T)$,}\\
\text{and}\\
\text{for every $z^+\in X^+\cup Y^+$, $z^+x\in A(T)$.} 
\end{cases}
\end{equation}
We verify that $T$ is a realization of $H$. 
Since $T_x$ realizes $H-x$, it suffices to verify that for distinct $v,w\in V_x$, $vw\in E(G_x)$ if and only if 
$T[\{x,v,w\}]$ is a 3-cycle. 
Hence, consider distinct $v,w\in V_x$. 
First, suppose that $vw\in E(G_x)$. 
Denote by $C$ the component of $G_x$ containing $v$ and $w$. 
Since Assertion (M1) holds, $C$ is bipartite with bipartition 
$\{X^-\cap V(C),X^+\cap V(C)\}$. 
By exchanging $v$ and $w$ if necessary, we can assume that $v\in X^-\cap V(C)$ and $w\in X^+\cap V(C)$. 
It follows from \eqref{E0_realiza_non_critical} that $vw\in A(T_x)$. 
Furthermore, it follows from \eqref{E8_realiza_non_critical} that $xv\in A(T)$ and $wx\in A(T)$. 
Therefore, $T[\{x,v,w\}]$ is a 3-cycle. 
Second, suppose that $T[\{x,v,w\}]$ is a 3-cycle. 
By exchanging $v$ and $w$ if necessary, we can assume that $vw,wx,xv\in A(T)$. 
It follows from \eqref{E8_realiza_non_critical} that $v\in X^-\cup Y^-$ and $w\in X^+\cup Y^+$. 
Moreover, since Assertion (M2) holds and $vw\in A(T_x)$, we obtain $v\in X^-$ and $w\in X^+$. 
It follows from \eqref{E0_realiza_non_critical} that $vw\in E(G_x)$. 
\end{proof}

We conclude by counting the number of realizations of a realizable 3-uniform hypergraph. 
This counting is an immediate consequence of Proposition~\ref{construction_realization}. 
We need the following notation. 

\begin{nota}
Let $H$ be a 3-uniform hypergraph. 
Set 
\begin{equation*}
\begin{cases}
\mathscr{D}_\triangle(H)=\{X\in\mathscr{D}_{\geq 2}(H):\varepsilon_H(X)=\triangle\}\ \text{(see Definition~\ref{tree_hypergraph})}\\
\text{and}\\
\mathscr{D}_\medcircle(H)=\{X\in\mathscr{D}_{\geq 2}(H):\varepsilon_H(X)=\medcircle\}.
\end{cases}
\end{equation*}
\end{nota}

\begin{cor}\label{counting_realizations}
For a realizable 3-uniform hypergraph, we have 
\begin{equation*}
|\mathscr{R}(H)|=2^{|\mathscr{D}_\triangle(H)|}\times\prod_{X\in\mathscr{D}_\medcircle(H)}|\Pi(H[X])|!\hspace{2mm}.
\end{equation*}
\end{cor}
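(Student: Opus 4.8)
The plan is to deduce everything from Proposition~\ref{construction_realization}. That proposition asserts that $\delta_H\colon\mathscr{R}(H)\to\mathscr{Q}(H)$ is a bijection, so $|\mathscr{R}(H)|=|\mathscr{Q}(H)|$; and by the definition of $\mathscr{Q}(H)$ in Remark~\ref{rem_theta}, an element of $\mathscr{Q}(H)$ is nothing but a choice, made independently for each $Y\in\mathscr{D}_{\geq 2}(H)$, of a realization of $H[Y]/\Pi(H[Y])$. Hence
$$|\mathscr{R}(H)|=\prod_{Y\in\mathscr{D}_{\geq 2}(H)}|\mathscr{R}(H[Y]/\Pi(H[Y]))|,$$
and it remains to identify each factor. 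Note that by Remark~\ref{rem_theta} each hypergraph $H[Y]/\Pi(H[Y])$ is realizable.

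First I would record that each quotient $H[Y]/\Pi(H[Y])$ is again $3$-uniform. Indeed, by Remark~\ref{rem_partition} every edge $\mathcal{E}$ of $H[Y]/\Pi(H[Y])$ is the trace $e/\Pi(H[Y])$ of some edge $e$ of $H[Y]$, of which moreover $e$ is a transverse; since $H$ is $3$-uniform, $|\mathcal{E}|=|e|=3$. Consequently $H[Y]/\Pi(H[Y])$ has no edge of size $2$, so it is not a complete graph (recall $|\Pi(H[Y])|\geq 2$). By Theorem~\ref{Thbis_Gallai}, $H[Y]/\Pi(H[Y])$ is therefore empty or prime, i.e. $\varepsilon_H(Y)\in\{\medcircle,\triangle\}$ for every $Y\in\mathscr{D}_{\geq 2}(H)$; in other words $\mathscr{D}_{\geq 2}(H)$ is the disjoint union of $\mathscr{D}_\medcircle(H)$ and $\mathscr{D}_\triangle(H)$.

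Next I would evaluate the two types of factor. If $Y\in\mathscr{D}_\medcircle(H)$, then $H[Y]/\Pi(H[Y])$ is the empty $3$-uniform hypergraph on the vertex set $\Pi(H[Y])$; a tournament realizes it precisely when it has no induced $C_3$, i.e. when it is a linear order on $\Pi(H[Y])$, and linear orders on an $n$-element set are in bijection with its total orders, so there are $|\Pi(H[Y])|!$ of them. If $Y\in\mathscr{D}_\triangle(H)$, then $H[Y]/\Pi(H[Y])$ is prime and realizable; fix a realization $T$. By Theorem~\ref{same_primality}, $T$ is a prime tournament, whence $T\neq T^\star$, and by Theorem~\ref{thm_bilt} the only tournaments $T'$ with $C_3(T')=C_3(T)$ are $T$ and $T^\star$; so $|\mathscr{R}(H[Y]/\Pi(H[Y]))|=2$.

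Plugging these two values into the product displayed above --- each of the $|\mathscr{D}_\triangle(H)|$ factors indexed by $\mathscr{D}_\triangle(H)$ equals $2$, and the factor indexed by $X\in\mathscr{D}_\medcircle(H)$ equals $|\Pi(H[X])|!$ --- yields
$$|\mathscr{R}(H)|=2^{|\mathscr{D}_\triangle(H)|}\times\prod_{X\in\mathscr{D}_\medcircle(H)}|\Pi(H[X])|!,$$
which is the assertion. There is no genuinely hard step: the argument is bookkeeping on top of Proposition~\ref{construction_realization}. The two points that deserve care are the observation that the quotients $H[Y]/\Pi(H[Y])$ stay $3$-uniform --- so the label $\bullet$ never occurs and the product ranges only over $\mathscr{D}_\triangle(H)\cup\mathscr{D}_\medcircle(H)$ --- and the appeal to Theorem~\ref{thm_bilt} (together with $T\neq T^\star$) to conclude that a realizable prime $3$-uniform hypergraph has exactly two realizations.
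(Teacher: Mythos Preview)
Your proposal is correct and follows exactly the route the paper intends: the paper states that the corollary is an immediate consequence of Proposition~\ref{construction_realization} and gives no further proof, and what you have written is precisely the natural unpacking of that claim via the bijection $\delta_H$, Theorem~\ref{Thbis_Gallai}, and Theorem~\ref{thm_bilt}. The only details you add beyond what the paper makes explicit are the verification that the quotients remain $3$-uniform (so the label $\bullet$ does not occur) and the observation $T\neq T^\star$; both are straightforward and your justifications are sound.
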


\end{document}